\algrenewcommand\algorithmicrequire{\textbf{Input:}}
\algrenewcommand\algorithmicensure{\textbf{Output:}}
\newtheorem{theorem}{Theorem}
\newtheorem{lemma}{Lemma}
\newtheorem{proposition}{Proposition}
\newtheorem{assumption}{Assumption}
\newtheorem{definition}{Definition}
\newenvironment{proof}{\begin{trivlist}
    \item[\hskip\labelsep{\bf Proof.}]}{$\hfill\Box$\end{trivlist}}
\theoremstyle{plain} \theorembodyfont{\rmfamily}
\newtheorem{example}{Example}
\newtheorem{remark}{Remark}}
\newcommand{\bsk}{{\boldsymbol{k}}}
\newcommand{\bsi}{{\boldsymbol{i}}}
\newcommand{\bsj}{{\boldsymbol{j}}}
 \newcommand{\hPhi}{{\bar{\Phi}}}
\renewcommand{\hat}{\widehat}
\newcommand{\revised}{\empty}
\newcommand{\vertiii}[1]{{\left\vert\kern-0.25ex\left\vert\kern-0.25ex\left\vert #1 
\right\vert\kern-0.25ex\right\vert\kern-0.25ex\right\vert}}
\begin{document}

\title{Analysis of Stochastic Gradient Descent in Continuous Time\thanks{The author acknowledges support from the EPSRC grant EP/S026045/1 ``PET++: Improving Localisation, Diagnosis and Quantification in Clinical and Medical PET Imaging with Randomised Optimisation". The author is very grateful for insightful discussions with Claire Delplancke, Matthias Ehrhardt, and Carola-Bibiane Sch\"onlieb that contributed to this work. Furthermore, the author thanks Christian Etmann and Felipe Uribe for carefully reading and commenting on this manuscript. Finally, the author thanks the anonymous reviewers for their helpful and constructive reports.  }
}


\author{Jonas Latz     
}


\date{\footnotesize Department of Applied Mathematics and Theoretical Physics,
              University of Cambridge \\
              Wilberforce Road, Cambridge, CB3 0WA, United Kingdom \\
            \texttt{jl2160@cam.ac.uk}}

\maketitle

\begin{abstract}
Stochastic gradient descent is an optimisation method that combines classical gradient descent with random subsampling within the target functional. 
In this work, we introduce the stochastic gradient process as a conti\-nuous-time representation of stochastic gradient descent.
The stochastic gradient process is a dynamical system that is coupled with a continuous-time Markov process living on a finite state space. The dynamical system -- a  gradient flow -- represents the gradient descent part, the process on the finite state space represents the random subsampling. Processes of this type are, for instance, used to model clonal populations in fluctuating environments.
After introducing it, we study theoretical properties of the stochastic gradient process\revised{:} We show that it converges weakly to the gradient flow with respect to the full target function, as the learning rate approaches zero. \revised{We give conditions under which the stochastic gradient process with constant learning rate is} exponentially ergodic in the Wasserstein sense. \revised{Then we study the case, where} the learning rate goes to zero sufficiently slowly \revised{ and the single target functions are strongly convex}. In this case, the process converges weakly to the point mass concentrated in the global minimum of the full target function\revised{; indicating consistency of the method.}
We conclude \revised{after} a discussion of discretisation strategies for the stochastic gradient process and  numerical experiments.

\textbf{Keywords: }{Stochastic optimisation $\cdot$ ergodicity $\cdot$ piecewise-deterministic Markov processes  $\cdot$ Wasserstein distance}

\textbf{MSC2010: }{90C30 $\cdot$ 
60J25 $\cdot$ 
37A25 $\cdot$ 
65C40 $\cdot$ 
68W20}
\end{abstract}
\section{Introduction} \label{Sec_intro}
The training of models with \emph{big} data sets is a crucial task in modern machine learning and artificial intelligence. 
The training is usually phrased as an optimisation problem. Solving this problem with classical optimsation algorithms is usually infeasible.  Classical algorithms being \emph{gradient descent} or the \emph{(Gauss--)Newton method}; see  \cite{Nocedal2006}. 
Those methods require evaluations of the loss function with respect to the full \emph{big} data set in each iteration. This leads to an immense computational cost.

Stochastic optimisation algorithms that only consider a small fraction of the data set in each step have shown to cope well with this issue in practice; see, e.g., \cite{bottou-98x,Chambolle2018,robbins1951}. The stochasticity of the algorithms is typically induced by \emph{subsampling}. In subsampling the aforementioned small fraction of the data set is picked randomly in every iteration.
Aside from a higher efficiency, this randomness can have a second effect: The perturbation introduced by subsampling can allow to escape local extrema and saddle points. This is highly relevant for target functions in, e.g., deep learning, since those are often non-convex; see \cite{choromanska15,Vidal2017}.

Due to the randomness in the updates, the sequence of iterates of a stochastic optimisation algorithm forms a stochastic process; rather than a deterministic sequence. 
Stochastic properties of these processes have been hardly studied in the literature so far; see \cite{Benaim_book,Dieuleveut2017,Hu2019} for earlier studies. However, understanding these properties seems crucial for the construction of efficient stochastic optimisation methods.

In this work, we study the stochastic processes generated by the \emph{stochastic gradient descent (SGD)} algorithm. More precisely, the contributions of this work are:
\begin{enumerate}
\item We construct the \emph{stochastic gradient process (SGP)}, a continuous-time representation of SGD. We show that  SGP is a sensible continuum limit of SGD and discuss SGP from a biological viewpoint: a model of the same type is used to model growth and phenotypes of clonal populations living in randomly fluctuating environments.
\item We study the long-time behaviour of SGP: 
We give assumptions under which SGP \revised{with constant learning rate} has a unique stationary measure and converges to this measure in the Wasserstein distance at exponential rate. In this case, SGP is \emph{exponentially ergodic}. If the learning rate is decreasing to zero and additional assumptions hold, we will prove that SGP converges weakly to the Dirac measure concentrated in the global optimum.
\item We discuss discretisation strategies for SGP. Those will allow us to derive practical optimisation algorithms from SGP. We also discuss existing algorithms that can be retrieved in this way.
\item We illustrate \revised{and investigate} the stochastic gradient process and its stationary regime alongside with stochastic gradient descent in numerical experiments.
\end{enumerate}

This work is organised as follows: we introduce notation and background in the remainder of \S\ref{Sec_intro}. In \S\ref{Sec_Modelling}, we introduce the stochastic gradient process and justify our model choice. We study the long-time behaviour of SGP in \S\ref{Sec_longtime}. After discussing discretisation strategies for SGP in \S \ref{Sec_Discret}, we give numerical \revised{experiments} in \S\ref{Sec_NumIll} and conclude the work in \S \ref{Sec_conclu}. 

\subsection{Stochastic gradient descent} \label{Subse_SGD_Standard}
Let $(X, \|\cdot\|) := (\mathbb{R}^K, \|\cdot\|_2)$, let $\langle \cdot, \cdot \rangle$ be the associated inner product, and let $\mathcal{B}X := \mathcal{B}(X, \|\cdot\|)$ be the Borel $\sigma$-algebra on $X$. Functions defined throughout this work will be assumed to be measurable with respect to appropriate $\sigma$-algebras.  Let $\hPhi: X \rightarrow \mathbb{R}$ be some function attaining a global minimum in $X$.   We assume that $\hPhi$ is of the form
$$
\hPhi = \frac{1}{N}\sum_{i=1}^N \Phi_i.
$$
Here,  $N \in \mathbb{N} := \{1, 2, \ldots \}$, $N \geq 2$, and $\Phi_i: X \rightarrow \mathbb{R}$ is some continuously differentiable \revised{function}, for $i$ in the index set $I := \{1,\ldots,N\}$. 
In the following, we aim to solve the unconstrained optimisation problem 
\begin{equation} \label{eq_optprob}
 \theta^* \in   \mathrm{argmin}_{\theta \in X} \hPhi(\theta).
\end{equation}
Optimisation problems as given in \eqref{eq_optprob} frequently arise in data science and machine learning applications. Here $\hPhi$ represents the negative log-likelihood or loss function of some training data set $y$ with respect to some model.
An index $i \in I$ typically refers to a particular fraction $y_i$ of the data set $y$. Then, $\Phi_i(\theta)$ represents the negative log-likelihood of only this fraction $y_i$ given the model parameter $\theta \in X$ or  the associated loss, respectively.

For optimisation problems of this kind, we employ the \emph{stochastic gradient descent} algorithm, which was proposed by Robbins and Monro \cite{robbins1951}.
We sketch this method in Algorithm~\ref{alg_SGD}. In practice, it is implemented with an appropriate termination criterion.
\begin{algorithm}
\caption{Stochastic gradient descent}\label{alg_SGD} 
\begin{algorithmic}[1] \State initialise $\theta_0 \in X$ deterministically or randomly 
\State define non-increasing sequence $(\eta_k)_{k=1}^\infty \in (0,\infty)^{\mathbb{N}}$ \label{line_noninc}
\For{$k = 1, 2, \ldots$}
      \State sample $\bsi_k \sim \mathrm{Unif}(I)$ \label{line_Unifsampl}
      \State  $\theta_k \gets \theta_{k-1} - \eta_k \nabla \Phi_{\bsi_k}(\theta_{k-1})$ \label{line_update}
   \EndFor
   \State \textbf{return} $(\theta_k)_{k=0}^\infty$
\end{algorithmic}
\end{algorithm}

The elements of the sequence $(\eta_k)_{k=1}^\infty$ defined in Algorithm~\ref{alg_SGD} line~\ref{line_noninc} are called \emph{step sizes} or \emph{learning rates}.
SGD is typically understood as a gradient descent algorithm with inaccurate gradient evaluations: the inaccuracy arises since we randomly substitute $\hPhi$ by some $\Phi_i$.
If $\lim_{k \rightarrow \infty} \eta_k = 0$ \revised{sufficiently slowly}, one can  show convergence for convex target \revised{functions} $\hPhi$; see, e.g., \cite{Jentzen2018,Nemirovski2009}. 
Moreover, as opposed to descent methods with exact gradients, the inexact gradients can help the algorithm escaping local extrema and saddle points in non-convex problems; see, e.g., \cite{Hu2019}.

In this work, we consider gradient descent algorithms as time stepping discretisations of a certain gradient flow. 
The potential of this gradient flow is the respective target \revised{function} $\hPhi$, $\Phi_1, \ldots, \Phi_N$. Thus, we refer to these target \revised{functions} as \emph{potentials}. In SGD, the potentials of these gradient flows are randomly \emph{switched} after every time step. 

We now comment on the meaning of the learning rate $\eta_k$.
\begin{remark} \label{Rem_eta} 
In the gradient flow setting, the learning rate $\eta_k$ has two different interpretations/objectives:
\begin{enumerate}
\item[(i)] It represents the step size of the explicit Euler me\-thod that is used to discretise the underlying gradient flow.
\item[(ii)]It represents the length of the time interval in which the flow follows a certain potential $\Phi_{i}$ at the given iteration $k$, i.e. the time between t\revised{w}o switches of potentials.
\end{enumerate}
\end{remark}
Recently, several authors, e.g. \cite{Trillos2018,kuntz2019,Schillings2017}, have been studying the behaviour of algorithms and methods at their continuum limit; i.e. the limit as  $\eta_j \downarrow 0$. The advantage of such a study is that numerical aspects, e.g., arising from the time discretisation can be neglected. Also, a new spectrum of tools is available to analyse, understand, and interpret the continuous system.
If the continuous system is a good representation of the algorithm, we can sometimes use the results in the continuous setting to improve our understanding of the discrete setting.

Under some assumptions,  a \emph{diffusion process} is a good choice for a continuous\revised{-}time model of SGD. Diffusion processes, such as Langevin dynamics, are traditionally used in statistical physics to represent the motion of particles; see, e.g., \S8 in \cite{Schwabl2006}.
\subsection{Diffusions and piecewise-deterministic Markov processes}
Under assumptions discussed in \cite{Hu2019}\revised{\cite{Li2019}}, one can show that the sequence of iterates of the SGD algorithm\revised{, with, say, constant $(\eta_k)_{k=1}^\infty \equiv \eta$,} can  be approximated by a stochastic differential equation of the following form:
\begin{align} 
  {\mathrm{d}\tilde{\theta}(t)} &= -\nabla\hPhi\big(\tilde{\theta}(t)\big){\mathrm{d}t} + \revised{\sqrt{\eta}} \Sigma\big(\tilde{\theta}(t)\big)^{\revised{1/2}} \mathrm{d}{W}(t) \quad (t > 0), \notag  \\
    \tilde{\theta}(0) &= \theta_0. \label{Eq_Diffusion_process}
\end{align}
Here, $\Sigma(\theta): X \rightarrow X$ is \revised{symmetric, }positive semi-definite for $\theta \in X$ and $W:[0, \infty) \rightarrow X$ is a $K$-dimensional Brownian motion.  \revised{`Can be approximated' means that as $\eta$ goes to zero, the approximation of SGD via such a diffusion process is precise in a weak sense. 
In the following remark, we give a (rather coarse) intuitive explanation, how this diffusion process could be derived using the Central Limit Theorem  and discretisation schemes for stochastic differential equations.
\begin{remark}  Let $\eta \approx 0$. Then, for some $k \in \mathbb{N}$, we have
\begin{align*}
\theta_k = \theta_{k-1} - \eta\nabla \Phi_{\bsi_k}(\theta_{k-1}) 
&\approx \theta_0 - \eta \sum_{\ell=1}^{k} \nabla \Phi_{\bsi_\ell}(\theta_{0}) \\ &= \theta_0 - \eta k  \sum_{\ell=1}^{k} \frac{\nabla \Phi_{\bsi_\ell}(\theta_{0}) }{k}
\end{align*}
The term $\sum_{\ell=1}^{k} \frac{\nabla \Phi_{\bsi_\ell}(\theta_{0}) }{k}$ is now the sample mean of a finite sample of independent and identically distributed (i.i.d.) random variables with finite variance. Hence, by the Central Limit Theorem, 
$$\sum_{\ell=1}^{k} \frac{\nabla \Phi_{\bsi_\ell}(\theta_{0}) }{k} \approx \nabla\hPhi(\theta_0) + \frac{\gamma_0}{\sqrt{k}}, $$ where $\gamma_0 \sim \mathrm{N}(0, \Sigma(\theta_0))$ and the  covariance matrix is given by $$\Sigma(\theta_0) := \frac{1}{N}\sum_{i \in I}(\nabla\Phi_i(\theta_0)-\hPhi(\theta_0))(\nabla\Phi_i(\theta_0)-\hPhi(\theta_0))^T.$$ Then, we have 
$$
\theta_k \approx \theta_{0} - \eta k \nabla\hPhi(\theta_0) - \sqrt{\eta k} \sqrt{\eta}\gamma_0,
$$
which is the first step of an Euler--Maruyama discretisation of the diffusion process in \eqref{Eq_Diffusion_process} with step size $\eta k$. See, e.g., \cite{lord_powell_shardlow_2014} for details on discretisation strategies for stochastic differential equations.
\end{remark}}

Th\revised{e} diffusion view \eqref{Eq_Diffusion_process} of SGD has been discussed by Li et al. \cite{Weinan2,Weinan3,Weinan1} and Mandt et al. \cite{Mandt2016,Mandt2017}. Moreover, it forms the basis of the Stochastic Gradient Langevin MCMC algorithm \cite{Mandt2017,Welling2011}. \revised{A diffusive continuous-time version of stochastic gradient descent also arises when the underlying target functional itself contains a continuous data stream; see \cite{Sirignano2017SIFIN,Sirignano2019}; this however is not the focus of the present work.}

Unfortunately, the process of slowly switching between \revised{a finite number of} potentials in the pre-asymptotic phase \revised{of SGD} is not represented in the diffusion. 
Indeed, the diffusion represents an infinite amount of switches within any strictly positive time horizon. In SGD this is only the case as $\eta_k\downarrow 0$; see \cite{Brosse2018}.
The pre-asymptotic phase\revised{, however, is vital for the robustness of the algorithm and its computational efficiency. Moreover}, the SGD algorithm is sometimes applied with a constant learning rate; see \cite{Chee2018}. Here, the regime $\eta_k\downarrow 0$ is never reached.
\revised{Finally, one motivation for this article has been the creation of new stochastic optimisation algorithms. 
Here, the switching between a finite number of potentials/data sets is a crucial element to reduce computational cost and memory complexity. 
Replacing the subsampling by a full sampling and adding Gaussian noise is not viable in large data applications.}

In this work, we aim to propose a continuous-time model of SGD that captures the switching of the \revised{finite number of} potentials. To this end we separate the two different learning rate objects: the gradient flow discretisation and the waiting time between two switches of potentials; see Remark~\ref{Rem_eta} (i) and (ii) respectively. We proceed as follows:
\begin{enumerate}
\item We let the discretisation step width go to zero and thus obtain a gradient flow with respect to some potential $\Phi_i$.
\item We randomly replace $\Phi_i$ by another potential $\Phi_j$ after some strictly positive waiting time.
\end{enumerate}
Hence, we take the continuum limit \emph{only} in the discretisation of the gradient flows, but not in the switching of potentials.
\revised{This gives us a continuous\revised{-}time dynamic in which the randomness is not introduced by a diffusion, but by an evolution according to a potential that is randomly chosen from a finite set. This non-diffusive approach should give a better representation of the pre-asymptotic phase. Moreover, since we do not require the full potential in this dynamical system, we obtain a representation that is immediately relevant for the construction of new computational methods.}

We will model the waiting times $T$ between two switches as a random variable following a \emph{failure distribution}, i.e. $T$ has survival function \begin{equation} \label{eq_failuredist}
\mathbb{P}(T \geq t) := \mathbf{1}[t < 0] + \exp\left(-\int_{0}^t \nu(u+t_0) \mathrm{d}u \right)\mathbf{1}[t \geq 0]
\end{equation}
where $t \in \mathbb{R}$, $t_0 \geq 0$ is the current time, $\nu: [0, \infty) \rightarrow (0, \infty)$ is a \emph{hazard function} that depends on time, and $\mathbf{1}[\cdot]$ represents the indicator function: $\mathbf{1}[\text{true}] := 1$ and $\mathbf{1}[\text{false}]:=0$. We denote $\mathbb{P}(T \in \cdot) =: \pi_{\rm wt}(\cdot | t_0)$.
Note that when $\nu$ is constant, $T$ is \emph{exponentially distributed}.

Then, we obtain a so-called \emph{Markov switching process}; see, e.g. \cite{Bakhtin2012,benaim2012:quant,benaim2015:qual,Cloez2015,Yin2010}. Markov switching processes are a subclass of \emph{piecewise deterministic Markov processes} (PDMPs). 
PDMPs were first introduced by Davis \cite{Davis84} as `a general class of non-diffusion stochastic models'; see also \cite{davis1993}.
They play a crucial role in the modelling of biological, economic, technical, and physical systems; e.g., as a model for internet traffic \cite{Graham2011} or in risk analysis \cite{Kritzer2019}. See also \S\ref{Subs_Biolo}, where we discuss a particular biological system that is modelled by a PDMP.
Furthermore, PDMPs have recently gained attention in the Markov chain Monte Carlo literature as efficient way of sampling from inaccessible probability distributions; see, e.g., \cite{bierkens2019,fearnhead2018,power2019}.

\section{From discrete to continuous} \label{Sec_Modelling}
In the following, we give a detailed description of the two PDMPs that will be discussed throughout this article: One PDMP will represent SGD with constant learning rate, the other PDMP models SGD with decreasing learning rate. Then, we will argue, why we believe that these PDMPs give an accurate continuous-time representation of the associated SGD algorithms.
Finally, we give a biological interpretation of the PDMPs discussed in this section.
\subsection{Definition and well-definedness}
Let $(\Omega, \mathcal{A}, \mathbb{P})$ be a probability space on which all random variables in this work are defined.
We now define two \emph{continuous-time Markov processes (CTMPs)} on $I$\revised{$:= \{1,\ldots,N\}$} that will model the switching of the data sets in our PDMPs. For details on continuous-time Markov processes on finite state spaces, we refer to \cite{Anderson1991}. 
We start with the constant learning rate.
Let $\lambda > 0$ be a positive constant and let $\bsi: \Omega \times [0, \infty) \rightarrow I$ be the CTMP on $I$ with transition rate matrix 
\begin{equation} \label{eq_transition_Mat}
    A := \begin{pmatrix}\lambda & \cdots & \lambda \\ \vdots & \ddots & \vdots \\ \lambda & \cdots & \lambda \end{pmatrix} - N\lambda \cdot  \mathrm{Id}_I 
\end{equation}
and with initial distribution $\bsi(0) \sim \mathrm{Unif}(I). $ Here, $\mathrm{Id}_I$ is the identity matrix in $\mathbb{R}^{N \times N}$.
Let $M_t: I \times 2^I \rightarrow[0,1]$ be the Markov kernel representing the semigroup of $(\bsi(t))_{t \geq 0}$, i.e. $$M_t(\cdot | i_0) := \mathbb{P}(\bsi(t) \in \cdot| \bsi(0) = i_0) \qquad (i_0 \in I, t \geq 0).$$
This Markov kernel can be represented analytically by solving the associated Kolmogorov forward equation. \revised{We do this in Lemma~\ref{Lemma_Appendix_M} in Appendix~\ref{Appendix_CTMPs} and show that}
\begin{align} \label{eq_transitionKernel}
    M_t(\{i\}|i_0) = \frac{1-\exp(-\lambda N t)}{N} &+ \exp(-\lambda N t) \mathbf{1}[i = i_0],
\end{align}
where $i, i_0 \in I, t \geq 0$.
Moreover, note that the waiting time between two jumps of the process $(\bsi(t))_{t\geq 0}$ is given by an exponential distribution with rate $(N-1)\lambda$, i.e. $\pi_{\rm wt}(\cdot | t_0) = \mathrm{Exp}((N-1)\lambda)$.
The CTMP $(\bsi(t))_{t\geq 0}$ will represent the switching among potentials in the SGD algorithm with \emph{constant learning rate}.

Now, we move on to the case of a decreasing learning rate. Let $\mu: [0, \infty) \rightarrow (0,\infty)$ be a non-decreasing, positive, and \revised{continuously differentiable} function, with $\mu(t) \rightarrow \infty,$ as $t \rightarrow \infty$. \revised{We assume furthermore that  for any $\overline{t} > 0$ there is some constant $C_{\overline{t} }> 0$ : \begin{equation} \label{Eq_bounded_deriv}
\left\lvert\frac{\partial\mu}{\partial t}(t)\right\rvert \leq C_{\overline{t} } \mu(t) \qquad (t \in [0,\overline{t}]).
\end{equation}}

We define $\bsj: \Omega \times [0, \infty) \rightarrow I$ to be the inhomogeneous CTMP with time-dependent transition rate matrix $B: [0, \infty) \rightarrow \mathbb{R}^{N \times N}$ given by
\begin{align}
 B(t) := \begin{pmatrix}\mu(t) & \cdots & \mu(t) \\ \vdots & \ddots & \vdots \\ \mu(t) & \cdots & \mu(t) \end{pmatrix} - N\mu(t) \cdot  \mathrm{Id}_I  \qquad (t \geq 0).
\end{align}
Again, we assume that the initial distribution $\bsj(0) \sim \mathrm{Unif}(I)$.  Equivalently to \eqref{eq_transitionKernel}, we can compute the associated Markov transition kernel in this setting. First note that since $(\bsj(t))_{t \geq 0}$ is not homogeneous in time, it is not sufficient to construct the Markov kernel with respect to the state of the Markov process at time $t_0 = 0$. Indeed, we get a kernel of type
$$M_{t|t_0}'(\cdot | j_0) := \mathbb{P}(\bsj(t) \in \cdot| \bsj(t_0) = j_0),$$
where  $j_0 \in I$ and $ t \geq t_0 \geq 0.$
This kernel is given by
\begin{align} 
M_{t|t_0}'(\{j\}|j_0) = &\frac{1-\exp\left(-N \int_{t_0}^t \mu(u) \mathrm{d}u\right)}{N} + \exp\left(- N \int_{t_0}^t \mu(u) \mathrm{d}u\right) \mathbf{1}[j = j_0], \label{eq_transitionKernel_decreasing_case}
\end{align}
where $j, j_0 \in I$ and $t \geq t_0 \geq 0$\revised{; see again Lemma~\ref{Lemma_Appendix_M} in Appendix~\ref{Appendix_CTMPs}.}
In this case, the waiting time at time $t_0 \geq 0$ between two jumps is distributed according to the failure distribution $\pi_{\rm wt}$ in \eqref{eq_failuredist}, with $\nu \equiv (N-1)\mu$.
The CTMP $(\bsj(t))_{t\geq 0}$ represents the potential switching when SGD has \emph{decreasing learning rates}.

Based on these Markov jump processes, we can now define the stochastic gradient processes that will act as continuous\revised{-}time version of SGD as defined in Algorithm~\ref{alg_SGD}.
\begin{definition}[SGP]  \label{Def_SGD_MP}
Let $\theta_0, \xi_0 \in X$. We define  
\begin{enumerate}
\item[(i)] the \emph{stochastic gradient process  with constant learning rate (SGPC)} as a solution of the initial value problem
\begin{align} \label{eq_SGPC}
    \frac{\mathrm{d}\theta(t)}{\mathrm{d}t} &= - \nabla \Phi_{\bsi(t)}(\theta(t)), \qquad \theta(0) = \theta_0,
\end{align} 
\item[(ii)] the \emph{stochastic gradient process with decreasing learning rate (SGPD)} as a solution of the initial value problem
\begin{align} \label{eq_SGPD}
    \frac{\mathrm{d}\xi(t)}{\mathrm{d}t} &= - \nabla \Phi_{\bsj(t)}(\xi(t)), \qquad \xi(0) = \xi_0.
\end{align}
\end{enumerate}
Also, we use the denomination \emph{stochastic gradient process} (SGP) when referring to (i) and (ii) at the same time. 
\end{definition}
We illustrate the processes $(\bsi(t))_{t \geq 0}$ and $(\theta(t))_{t \geq 0}$ in Figure~\ref{Fig_pdmp_cartoon}. We observe that SGP constructs a piecewise smooth path that is smooth between jumps of the underlying CTMP.


\begin{figure*}
    \centering
  \begin{tikzpicture}[scale=1.1]
    \coordinate (y) at (0,2);
    \coordinate (x) at (12,0);
    \draw[thick,->] (y) -- (0,0) --  (x) node[right]
    {$t$};
    \draw[-,thick] (-0,2) node[left] {$N$}  -- (0.1,2)
    (-0,1.5) node[left] {$N-1$}  -- (0.1,1.5)
    (-0.2,1.1) node[left] {$\vdots$}
    (-0,0.5) node[left] {$2$} -- (0.1,0.5)
    (-0,0)  node[left] {$1$} -- (0.1,0)
    (0,-3.5) node[below] {$T_0$} -- (0,-3.4)
        (1.5,-3.5) node[below] {$T_1$} -- (1.5,-3.4)

    (2.,-3.5) node[below] {$T_2$} -- (2.,-3.4)
    (2.7,-3.5) node[below] {$T_3$} -- (2.7,-3.4)
    (4.5,-3.5) node[below] {$T_4$} -- (4.5,-3.4)
    (6.9,-3.5) node[below] {$T_5$} -- (6.9,-3.4)
    (7.4,-3.5) node[below] {$T_6$} -- (7.4,-3.4) 
        (9,-3.5) node[below] {$T_7$} -- (9,-3.4)
    (10.2,-3.5) node[below] {$T_8$\quad } -- (10.2,-3.4)
    (10.55,-3.5) node[below] {\quad $T_9$} -- (10.55,-3.4)
    (11.7,-3.5) node[below] {$T_{10}$} -- (11.7,-3.4) 
    (11.2,1.85) node[below] {$\bsi(t)$}
    (11.2,-1.15) node[below] {$\theta(t)$};
    \draw[dotted,thick] (1.5, -3.4) -- (1.5, 2)
    (2, -3.4) -- (2, 2)
    (2.7, -3.4) -- (2.7, 2)
    (4.5, -3.4) -- (4.5, 2)
    (6.9, -3.4) -- (6.9, 2)
    (7.4, -3.4) -- (7.4, 2)
    (9, -3.4) -- (9, 2)
    (10.2, -3.4) -- (10.2, 2)
    (10.55, -3.4) -- (10.55, 2)
    (11.7, -3.4) -- (11.7, 2);
    \coordinate (y) at (0,-1);
    \coordinate (x) at (12,-3.5);
    \draw[thick,<->] (y) node[left] {$X$} -- (0,-3.5)  --  (x) node[right] {$t$};

    \draw[{Circle}-{Circle[open]}] (0-0.065,2) -- (1.5+0.065,2);
    \draw[{Circle}-{Circle[open]}] (1.5-0.065,1) -- (2+0.065,1);
    \draw[{Circle}-{Circle[open]}] (2-0.065,1.5) -- (2.7+0.065,1.5);
    \draw[{Circle}-{Circle[open]}] (2.7-0.065,0.5) -- (4.5+0.065,0.5);
    \draw[{Circle}-{Circle[open]}] (4.5-0.065,1.5) -- (6.9+0.065,1.5);
    \draw[{Circle}-{Circle[open]}] (6.9-0.065,2) -- (7.4+0.065, 2);
    \draw[{Circle}-{Circle[open]}] (7.4-0.065,0) -- (9+0.0655, 0);
    \draw[{Circle}-{Circle[open]}] (9-0.065,2) -- (10.2+0.065, 2);
    \draw[{Circle}-{Circle[open]}] (10.2-0.065,1) -- (10.55+0.065,1);
    \draw[{Circle}-{Circle[open]}] (10.55-0.065,0) -- (11.7+0.065,0);
    
    \draw[thin] plot [smooth,tension=0] coordinates {(0,-2) (0,-2) (0.016746,-2.0972) (0.033492,-2.1881) (0.050238,-2.2731) (0.066984,-2.3526) (0.10448,-2.5124) (0.14198,-2.65) (0.17948,-2.7684) (0.21698,-2.8703) (0.25448,-2.958) (0.29198,-3.0335) (0.32948,-3.0985) (0.36698,-3.1544) (0.40448,-3.2025) (0.44198,-3.244) (0.47948,-3.2796) (0.51698,-3.3103) (0.55448,-3.3367) (0.59198,-3.3595) (0.62948,-3.3791) (0.66698,-3.3959) (0.70448,-3.4104) (0.74198,-3.4229) (0.77948,-3.4336) (0.81698,-3.4429) (0.85448,-3.4508) (0.89198,-3.4577) (0.92948,-3.4636) (0.96698,-3.4686) (1.0045,-3.473) (1.042,-3.4768) (1.0795,-3.48) (1.117,-3.4828) (1.1545,-3.4852) (1.192,-3.4873) (1.2295,-3.489) (1.267,-3.4906) (1.3045,-3.4919) (1.342,-3.493) (1.3795,-3.494) (1.417,-3.4948) (1.4377,-3.4952) (1.4585,-3.4956) (1.4792,-3.496) (1.5,-3.4963) (1.5,-3.4963) (1.5125,-3.3745) (1.525,-3.2587) (1.5375,-3.1486) (1.55,-3.0438) (1.5625,-2.9441) (1.575,-2.8493) (1.5875,-2.7591) (1.6,-2.6733) (1.6125,-2.5917) (1.625,-2.5141) (1.6375,-2.4402) (1.65,-2.37) (1.6625,-2.3032) (1.675,-2.2396) (1.6875,-2.1792) (1.7,-2.1217) (1.7125,-2.0669) (1.725,-2.0149) (1.7375,-1.9654) (1.75,-1.9183) (1.7625,-1.8735) (1.775,-1.8309) (1.7875,-1.7904) (1.8,-1.7519) (1.8125,-1.7152) (1.825,-1.6803) (1.8375,-1.6471) (1.85,-1.6156) (1.8625,-1.5856) (1.875,-1.557) (1.8875,-1.5298) (1.9,-1.504) (1.9125,-1.4794) (1.925,-1.456) (1.9375,-1.4338) (1.95,-1.4126) (1.9625,-1.3925) (1.975,-1.3734) (1.9875,-1.3552) (2,-1.3378) (2,-1.3378) (2.0145,-1.4031) (2.0289,-1.4648) (2.0434,-1.5229) (2.0578,-1.5778) (2.0753,-1.6402) (2.0928,-1.6983) (2.1103,-1.7525) (2.1278,-1.803) (2.1453,-1.8502) (2.1628,-1.8941) (2.1803,-1.9351) (2.1978,-1.9733) (2.2153,-2.0089) (2.2328,-2.0421) (2.2503,-2.073) (2.2678,-2.1019) (2.2853,-2.1288) (2.3028,-2.1539) (2.3203,-2.1773) (2.3378,-2.1991) (2.3553,-2.2195) (2.3728,-2.2384) (2.3903,-2.2561) (2.4078,-2.2726) (2.4253,-2.288) (2.4428,-2.3023) (2.4603,-2.3157) (2.4778,-2.3281) (2.4953,-2.3398) (2.5128,-2.3506) (2.5303,-2.3607) (2.5478,-2.3701) (2.5653,-2.3789) (2.5828,-2.3871) (2.6003,-2.3947) (2.6178,-2.4018) (2.6353,-2.4085) (2.6528,-2.4147) (2.6703,-2.4204) (2.6878,-2.4258) (2.6909,-2.4267) (2.6939,-2.4276) (2.697,-2.4285) (2.7,-2.4293) (2.7,-2.4293) (2.7449,-2.3175) (2.7898,-2.2242) (2.8347,-2.1463) (2.8797,-2.0812) (2.9247,-2.0266) (2.9697,-1.981) (3.0147,-1.943) (3.0597,-1.9112) (3.1047,-1.8846) (3.1497,-1.8624) (3.1947,-1.8439) (3.2397,-1.8285) (3.2847,-1.8155) (3.3297,-1.8047) (3.3747,-1.7957) (3.4197,-1.7882) (3.4647,-1.7819) (3.5097,-1.7766) (3.5547,-1.7723) (3.5997,-1.7686) (3.6447,-1.7655) (3.6897,-1.763) (3.7347,-1.7608) (3.7797,-1.7591) (3.8247,-1.7576) (3.8697,-1.7563) (3.9147,-1.7553) (3.9597,-1.7544) (4.0047,-1.7537) (4.0497,-1.7531) (4.0947,-1.7526) (4.1397,-1.7521) (4.1847,-1.7518) (4.2297,-1.7515) (4.2747,-1.7513) (4.3197,-1.751) (4.3647,-1.7509) (4.4098,-1.7507) (4.4549,-1.7506) (4.5,-1.7505) (4.5,-1.7505) (4.5293,-1.8335) (4.5587,-1.9073) (4.588,-1.9729) (4.6173,-2.0313) (4.6773,-2.1315) (4.7373,-2.2103) (4.7973,-2.2719) (4.8573,-2.3204) (4.9173,-2.3588) (4.9773,-2.389) (5.0373,-2.4126) (5.0973,-2.4312) (5.1573,-2.4459) (5.2173,-2.4575) (5.2773,-2.4665) (5.3373,-2.4736) (5.3973,-2.4793) (5.4573,-2.4837) (5.5173,-2.4872) (5.5773,-2.4899) (5.6373,-2.4921) (5.6973,-2.4938) (5.7573,-2.4951) (5.8173,-2.4961) (5.8773,-2.497) (5.9373,-2.4976) (5.9973,-2.4981) (6.0573,-2.4985) (6.1173,-2.4988) (6.1773,-2.4991) (6.2373,-2.4993) (6.2973,-2.4994) (6.3573,-2.4996) (6.4173,-2.4996) (6.4773,-2.4997) (6.5373,-2.4998) (6.5973,-2.4998) (6.6573,-2.4999) (6.7173,-2.4999) (6.7773,-2.4999) (6.808,-2.4999) (6.8387,-2.4999) (6.8693,-2.4999) (6.9,-2.4999) (6.9,-2.4999) (6.9125,-2.5487) (6.925,-2.5951) (6.9375,-2.6392) (6.95,-2.6812) (6.9625,-2.7212) (6.975,-2.7591) (6.9875,-2.7953) (7,-2.8296) (7.0125,-2.8623) (7.025,-2.8934) (7.0375,-2.923) (7.05,-2.9512) (7.0625,-2.9779) (7.075,-3.0034) (7.0875,-3.0276) (7.1,-3.0506) (7.1125,-3.0726) (7.125,-3.0934) (7.1375,-3.1132) (7.15,-3.1321) (7.1625,-3.15) (7.175,-3.1671) (7.1875,-3.1833) (7.2,-3.1988) (7.2125,-3.2135) (7.225,-3.2275) (7.2375,-3.2407) (7.25,-3.2534) (7.2625,-3.2654) (7.275,-3.2769) (7.2875,-3.2877) (7.3,-3.2981) (7.3125,-3.3079) (7.325,-3.3173) (7.3375,-3.3262) (7.35,-3.3347) (7.3625,-3.3428) (7.375,-3.3504) (7.3875,-3.3577) (7.4,-3.3647) (7.4,-3.3647) (7.44,-3.3107) (7.48,-3.2648) (7.52,-3.2256) (7.56,-3.1923) (7.6,-3.1638) (7.64,-3.1396) (7.68,-3.119) (7.72,-3.1014) (7.76,-3.0864) (7.8,-3.0736) (7.84,-3.0627) (7.88,-3.0535) (7.92,-3.0456) (7.96,-3.0388) (8,-3.0331) (8.04,-3.0282) (8.08,-3.024) (8.12,-3.0205) (8.16,-3.0174) (8.2,-3.0149) (8.24,-3.0127) (8.28,-3.0108) (8.32,-3.0092) (8.36,-3.0078) (8.4,-3.0067) (8.44,-3.0057) (8.48,-3.0049) (8.52,-3.0041) (8.56,-3.0035) (8.6,-3.003) (8.64,-3.0026) (8.68,-3.0022) (8.72,-3.0019) (8.76,-3.0016) (8.8,-3.0013) (8.84,-3.0011) (8.88,-3.001) (8.92,-3.0008) (8.96,-3.0007) (9,-3.0006) (9,-3.0006) (9.03,-3.0571) (9.06,-3.1072) (9.09,-3.1516) (9.12,-3.191) (9.15,-3.2259) (9.18,-3.2569) (9.21,-3.2844) (9.24,-3.3088) (9.27,-3.3304) (9.3,-3.3496) (9.33,-3.3666) (9.36,-3.3817) (9.39,-3.3951) (9.42,-3.4069) (9.45,-3.4174) (9.48,-3.4268) (9.51,-3.4351) (9.54,-3.4424) (9.57,-3.4489) (9.6,-3.4547) (9.63,-3.4598) (9.66,-3.4644) (9.69,-3.4684) (9.72,-3.472) (9.75,-3.4751) (9.78,-3.4779) (9.81,-3.4804) (9.84,-3.4827) (9.87,-3.4846) (9.9,-3.4864) (9.93,-3.4879) (9.96,-3.4893) (9.99,-3.4905) (10.02,-3.4916) (10.05,-3.4925) (10.08,-3.4934) (10.11,-3.4941) (10.14,-3.4948) (10.17,-3.4954) (10.2,-3.4959) (10.2,-3.4959) (10.2087,-3.41) (10.2175,-3.3272) (10.2263,-3.2471) (10.235,-3.1698) (10.2437,-3.0952) (10.2525,-3.0231) (10.2613,-2.9535) (10.27,-2.8864) (10.2787,-2.8215) (10.2875,-2.7588) (10.2963,-2.6983) (10.305,-2.6399) (10.3137,-2.5835) (10.3225,-2.529) (10.3313,-2.4765) (10.34,-2.4257) (10.3487,-2.3766) (10.3575,-2.3293) (10.3663,-2.2836) (10.375,-2.2394) (10.3837,-2.1968) (10.3925,-2.1556) (10.4013,-2.1159) (10.41,-2.0775) (10.4187,-2.0404) (10.4275,-2.0047) (10.4363,-1.9701) (10.445,-1.9367) (10.4537,-1.9045) (10.4625,-1.8734) (10.4713,-1.8434) (10.48,-1.8144) (10.4887,-1.7863) (10.4975,-1.7593) (10.5063,-1.7332) (10.515,-1.708) (10.5237,-1.6836) (10.5325,-1.6601) (10.5413,-1.6374) (10.55,-1.6155) (10.55,-1.6155) (10.5647,-1.6943) (10.5793,-1.7686) (10.594,-1.8387) (10.6086,-1.9049) (10.6374,-2.0238) (10.6661,-2.1299) (10.6949,-2.2244) (10.7236,-2.3087) (10.7524,-2.3838) (10.7811,-2.4507) (10.8099,-2.5104) (10.8386,-2.5636) (10.8674,-2.611) (10.8961,-2.6532) (10.9249,-2.6909) (10.9536,-2.7245) (10.9824,-2.7544) (11.0111,-2.7811) (11.0399,-2.8049) (11.0686,-2.8261) (11.0974,-2.845) (11.1261,-2.8618) (11.1549,-2.8768) (11.1836,-2.8902) (11.2124,-2.9021) (11.2411,-2.9128) (11.2699,-2.9222) (11.2986,-2.9307) (11.3274,-2.9382) (11.3561,-2.9449) (11.3849,-2.9509) (11.4136,-2.9562) (11.4424,-2.961) (11.4711,-2.9652) (11.4999,-2.969) (11.5286,-2.9724) (11.5574,-2.9754) (11.5861,-2.9781) (11.6149,-2.9804) (11.6436,-2.9826) (11.6577,-2.9835) (11.6718,-2.9844) (11.6859,-2.9853) (11.7,-2.9861)  };
    \end{tikzpicture}
    \caption{Cartoon of SGPC: the process $(\bsi(t))_{t \geq 0}$ is a right continuous, piecewise constant process on the set $I$, whereas the process $(\theta(t))_{t \geq 0}$ on $X$ is continuous and piecewise smooth. The pieces on which the processes are constant resp. smooth are identical, since the dynamic of $(\theta(t))_{t \geq 0}$ is controlled by $(\bsi(t))_{t \geq 0}$. Note that, $T_0$ is the initial time and the increments $T_k -T_{k-1}$ are the random waiting times.}
    \label{Fig_pdmp_cartoon}
\end{figure*}

In order to show that the dynamics in Definition~\ref{Def_SGD_MP} are well-defined, we require regularity assumptions on the potentials $(\Phi_i)_{i \in I}$. 
After stating those, we immediately move on with proving well-definedness in Proposition~\ref{Prop_PicLind}.

\begin{assumption} \label{Ass_Cinf} For any $i \in I$, let $\Phi_i: X \rightarrow \mathbb{R}$ be  continuously differentiable, i.e. $\Phi_i \in C^{1}(X; \mathbb{R})$, and let $\nabla \Phi_i$ be \revised{locally} Lipschitz continuous.
\end{assumption}

\begin{proposition}\label{Prop_PicLind}
Let Assumption~\ref{Ass_Cinf} hold. Then, the initial value problems \eqref{eq_SGPC} and \eqref{eq_SGPD} have a unique solution for $\mathbb{P}$-almost any realisation of the CTMPs $(\bsi(t))_{t \geq 0}$ and $(\bsj(t))_{t \geq 0}$, and for any initial values $\theta_0, \xi_0 \in X$. Moreover, the sample paths $t \mapsto \theta(t)$ and  $t \mapsto \xi(t)$ are $\mathbb{P}$-almost surely in $C^0([0, \infty); X).$ 
\end{proposition}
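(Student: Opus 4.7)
The plan is to exploit the piecewise-deterministic structure of the SGP: between consecutive jump times of the driving CTMP, the index $\bsi(t)$ (resp.\ $\bsj(t)$) is constant, so \eqref{eq_SGPC} (resp.\ \eqref{eq_SGPD}) reduces to an autonomous ODE with a fixed, locally Lipschitz right-hand side. Because the CTMPs are constructed independently of the flow, I would first condition on a realisation of the jump process and then build the continuous-state path pathwise.

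Fix a realisation of $(\bsi(t))_{t\geq 0}$ (resp.\ $(\bsj(t))_{t\geq 0}$) and let $0 = T_0 < T_1 < T_2 < \cdots$ be its jump times with associated indices $i_0, i_1, \ldots \in I$. On each interval $[T_k, T_{k+1})$ the system becomes
\begin{equation*}
\dot{\theta}(t) = -\nabla \Phi_{i_k}(\theta(t)), \qquad \theta(T_k) \text{ given},
\end{equation*}
and Assumption~\ref{Ass_Cinf} gives local Lipschitz continuity of $\nabla \Phi_{i_k}$. Hence the classical Picard--Lindel\"of theorem produces a unique maximal $C^1$ solution on $[T_k, T_k + \tau_k^*)$. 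Setting $\theta(T_{k+1}) := \lim_{t \uparrow T_{k+1}} \theta(t)$ as the initial condition for the next segment enforces continuity across the switch and makes the restriction of the path to $[T_k, T_{k+1}]$ uniquely determined, provided the segment does not blow up before $T_{k+1}$.

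To certify a path defined on all of $[0,\infty)$, two items remain. First, the set $\{T_k\}$ must have no finite accumulation point almost surely: in the constant-rate case the total outgoing rate from any state equals $(N-1)\lambda$, giving a Poisson process of finite intensity; in the inhomogeneous case the expected number of jumps in $[0,\overline{t}\,]$ equals $(N-1)\int_0^{\overline{t}} \mu(s)\,\mathrm{d}s$, which is finite by continuity of $\mu$ on $[0,\overline{t}\,]$. Hence almost surely only finitely many switches occur on any bounded interval, and the concatenation terminates at finite time for each finite horizon. Second, each gradient-flow segment must reach its next switch without finite-time blow-up; here I would invoke the dissipation identity
\begin{equation*}
\tfrac{\mathrm{d}}{\mathrm{d}t} \Phi_{i_k}(\theta(t)) = -\|\nabla\Phi_{i_k}(\theta(t))\|^2 \leq 0,
\end{equation*}
which confines $\theta(t)$ to the sublevel set $\{\Phi_{i_k} \leq \Phi_{i_k}(\theta(T_k))\}$ on which the locally Lipschitz gradient is Lipschitz, ruling out finite-time explosion.

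The main obstacle is precisely this no-explosion step: local (rather than global) Lipschitz regularity of $\nabla \Phi_{i_k}$ does not by itself prevent finite-time blow-up, and the sublevel-set argument only closes once one has boundedness of sublevel sets, so one must either add mild coercivity of the $\Phi_i$ or bootstrap from the dissipation identity to a Gr\"onwall-type bound on $\|\theta(t)\|$ within each segment. Once global existence on each segment is established, continuity of the concatenated path on $[0,\infty)$ is immediate, since every piece is $C^1$ and the left-limit matching at each $T_k$ yields $t \mapsto \theta(t) \in C^0([0,\infty);X)$ almost surely; the same reasoning applies verbatim to $\xi$ with $(\bsj(t))_{t\geq 0}$ in place of $(\bsi(t))_{t\geq 0}$.
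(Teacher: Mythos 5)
Your overall strategy coincides with the paper's: condition on a realisation of the driving CTMP, apply Picard--Lindel\"of on each inter-jump interval with the left limit at the previous jump time as initial datum, and verify that the jump times do not accumulate. One genuine difference is your non-explosiveness argument for $(\bsj(t))_{t \geq 0}$: since the outgoing rate $(N-1)\mu(t)$ is state-independent, the jump counter is an inhomogeneous Poisson process whose expected count on $[0,\overline{t}\,]$ is $(N-1)\int_0^{\overline{t}}\mu(s)\,\mathrm{d}s < \infty$ by continuity of $\mu$, so almost surely only finitely many jumps occur on any bounded interval. The paper instead constructs an auxiliary pure-birth CTMP on $\mathbb{N}$ and verifies the hypotheses of a non-explosion criterion from the literature (Lemma~\ref{Lemma_non-explosive}), which is where the growth condition \eqref{Eq_bounded_deriv} enters. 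Your route is more elementary and needs only local integrability of $\mu$; it is a legitimate simplification of that step.

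The one step you leave open is the exclusion of finite-time blow-up on each segment, and you are right that it does not follow from Assumption~\ref{Ass_Cinf} alone: local Lipschitz continuity of $\nabla\Phi_i$ yields only a maximal solution, your dissipation identity confines the trajectory to a sublevel set of $\Phi_{i_k}$, and that set need not be bounded (for $\Phi(\theta) = -\theta^4/4$ the flow $\dot\theta = \theta^3$ explodes in finite time while $\Phi$ decreases along it). Note, however, that the paper's own proof does not address this point either --- it simply asserts that Picard--Lindel\"of applies because the gradients are locally Lipschitz --- so your flag identifies an omission shared by the published argument rather than a defect specific to yours. Closing it rigorously requires an additional hypothesis such as coercivity of the $\Phi_i$, at-most-linear growth of the gradients, or the invariant-set/strong-convexity conditions that the paper only introduces later in Assumption~\ref{Ass_Contr}.
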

\begin{proof} We \revised{first} discuss the process $(\theta(t))_{t \geq 0}$. 
Let $T_0 = 0$ and $T_1,T_2,\ldots$ be the jump times of $(\bsi(t))_{t \geq 0}$. Let $k \in \mathbb{N}$.
Note that the increments $T_k - T_{k-1} \sim \mathrm{Exp}((N-1)\lambda)$. Hence, $\mathbb{P}(T_k - T_{k-1} > 0) = 1$.
By Assumption~\ref{Ass_Cinf} the $(\Phi_i)_{i=1}^N$ are locally Lipschitz continuous. Hence, the process $(\theta(t))_{t \geq 0}$ can be defined iteratively on the intervals
\begin{align*}
\frac{\mathrm{d}\theta(t)}{\mathrm{d}t} &= - \nabla \Phi_{\bsi(t)}(\theta(t))  &(t \in [T_{k-1}, T_k)), \\ \theta(T_{(k-1)}) &=\theta(T_{(k-1)}-) &(k \in \mathbb{N}),
\end{align*}
where $f(x-) := \lim_{x' \uparrow x}f(x')$  and $T_0-:= 0$.
Iterative application of the Picard--Lindel\"of Theorem for $k \in \mathbb{N}$ gives unique existence of the trajectory. Picard--Lindel\"of can be applied, since $\nabla \Phi_i$ is locally Lipschitz continuous for any $i \in I$ by Assumption~\ref{Ass_Cinf}.

\revised{The proof for $(\xi(t))_{t \geq 0}$ is partially analogous. Importantly, we now need to make sure that $$\mathbb{P}\left(\lim_{k\rightarrow \infty} T_k = \infty\right) = 1.$$ Otherwise, $(\bsj(t))_{t \geq 0}$ would only be well-defined up to a possibly finite explosion time $T_\infty:=\lim_{k\rightarrow \infty} T_k <\infty$. Under our assumptions, $(\bsj(t))_{t \geq 0}$ is indeed  `non-explo\-sive', we prove this in Lemma~\ref{Lemma_non-explosive} in Appendix~\ref{Appendix_CTMPs}. Moreover, let $k \in \mathbb{N}$. Then, we have  $$\mathbb{P}(T_k - t_{k-1} > 0) = \pi_{\rm wt}((0,\infty)|t_{k-1})= 1,$$ for any $t_{k-1} \geq 0$. This is implied by the continuous differentiability of $\mu$. Thus, we also have $$\mathbb{P}(T_k - T_{k-1} > 0) =1.$$  Then, as for $(\theta(t))_{t \geq 0}$ we can employ again Picard--Lindel\"of iteratively to show the $\mathbb{P}$-a.s. well-definedness of $(\xi(t))_{t \geq 0}$.} 
\end{proof}

\subsection{Choice of model} \label{Subsec_Choice_of_model}
In this section, we reason why the dynamical systems in Definition~\ref{Def_SGD_MP} are  sensible continuous-time models for SGD given in Algorithm~\ref{alg_SGD} with constant, resp.  decreasing learning rate. 

\paragraph{Gradient flow.} The update in line~\ref{line_update} of Algorithm~\ref{alg_SGD} is an explicit Euler update of the gradient flow with respect to the potential $\Phi_i$, for some $i \in I$. 
In this model, we replace this \revised{discretised} gradient flow with the \revised{continuous} dynamic. Hence, we replace
$$
\theta \leftarrow \theta - \eta \nabla \Phi_i(\theta) \qquad  \text{ by } \qquad  \frac{\mathrm{d}\theta(t)}{\mathrm{d}t} = - \nabla \Phi_{i}(\theta(t)).
$$

\paragraph{Uniform sampling.} We aim to accurately represent the uniform sampling from the index set $I$, given in line~\ref{line_Unifsampl} of the algorithm. Indeed, at each point in time $t \in [0, \infty)$, we can show that both $\bsi(t) \sim \mathrm{Unif}(I)$ and $\bsj(t) \sim \mathrm{Unif}(I)$.

\begin{proposition} We have
$\mathbb{P}(\bsi(t) \in \cdot)=\mathbb{P}(\bsj(t) \in \cdot)= \mathrm{Unif}(I)$ for any $t \geq 0$.
\end{proposition}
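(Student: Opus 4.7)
The plan is to use the tower/marginalisation identity together with the explicit transition kernels $M_t$ and $M'_{t|t_0}$ that have already been computed in \eqref{eq_transitionKernel} and \eqref{eq_transitionKernel_decreasing_case}. Since the initial distributions $\bsi(0)$ and $\bsj(0)$ are both $\mathrm{Unif}(I)$, each statement reduces to a one-line arithmetic check.

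For the homogeneous case, fix $t \geq 0$ and $i \in I$. Condition on $\bsi(0)$ and use $\mathbb{P}(\bsi(0) = i_0) = 1/N$ to write
\begin{equation*}
\mathbb{P}(\bsi(t) = i) \;=\; \sum_{i_0 \in I} M_t(\{i\} | i_0)\, \mathbb{P}(\bsi(0) = i_0) \;=\; \frac{1}{N} \sum_{i_0 \in I} \left[\frac{1-\exp(-\lambda N t)}{N} + \exp(-\lambda N t)\,\mathbf{1}[i = i_0]\right].
\end{equation*}
The first term in the bracket contributes $N \cdot (1 - \exp(-\lambda N t))/N = 1 - \exp(-\lambda N t)$, while the indicator contributes $\exp(-\lambda N t)$ (only the summand $i_0 = i$ survives). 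The two pieces add to $1$, so dividing by $N$ yields $\mathbb{P}(\bsi(t) = i) = 1/N$, which is exactly the statement that $\bsi(t) \sim \mathrm{Unif}(I)$.

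The inhomogeneous case is handled by the same calculation, only using $M'_{t|0}$ in place of $M_t$: the exponentials $\exp(-\lambda N t)$ are replaced by $\exp(-N \int_0^t \mu(u)\mathrm{d}u)$ and cancel in exactly the same way. So one gets $\mathbb{P}(\bsj(t)=j) = 1/N$ for every $j \in I$. I do not foresee any real obstacle here; the essential work has already been done in deriving the closed-form transition kernels (the appendix lemma~\ref{Lemma_Appendix_M}), and the present proposition is just the observation that $\mathrm{Unif}(I)$ is the (left-)invariant row vector of the rate matrices $A$ and $B(t)$, which holds because both matrices are symmetric and have row sums equal to zero.
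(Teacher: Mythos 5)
Your proof is correct and follows essentially the same route as the paper's: both marginalise the explicit transition kernel against the uniform initial distribution and verify that the exponential terms cancel to give $1/N$ (the paper writes out the inhomogeneous case and calls the homogeneous case analogous, whereas you do the reverse, which is immaterial). No gaps.
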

\begin{proof} To prove this proposition, we need to show that $\mathrm{Unif}(I)$ is stationary with respect to the Markov transition kernels $M_t$ and $M'_{t|t_0}$ given in \eqref{eq_transitionKernel} and \eqref{eq_transitionKernel_decreasing_case}, respectively. In particular, we need to show that
$$
\mathrm{Unif}(I) M_t(\{i\}| \cdot ) = \mathrm{Unif}(I)M'_{t|t_0}(\{i\}|\cdot) =  \mathrm{Unif}(I)(\{i\}),$$ for $i \in I$ and $0 \leq t_0 \leq t.$
We show only the decreasing learning rate case, the proof for the constant learning rate proceeds analogously. 
A calculation gives:
\begin{align*}
\mathrm{Unif}(I)M_{t|t_0}'(\{i\}|\cdot) &= \int_{I} M_{t|t_0}'(\{i\}|i_0)  \mathrm{Unif}(I)(\mathrm{d}i_0) \\ &= \frac{1}{N} \exp\left(- N \int_{t_0}^t \mu(u) \mathrm{d}u\right) + \frac{1}{N} \sum_{i_0=1}^{N}\frac{1-\exp\left(-N \int_{t_0}^t \mu(u) \mathrm{d}u\right)}{N} \\ &= \frac{1}{N} = \mathrm{Unif}(I)(\{i\}),
\end{align*}
for any $i \in I$ and $0 \leq t_0 \leq t$. 
\end{proof}
Hence, the CTMPs $(\bsi(t))_{t\geq 0}, (\bsj(t))_{t \geq 0}$ indeed represent the uniform sampling among the data set indices $i \in I$.

\paragraph{Markov property.}
The trajectory $(\theta_k)_{k=0}^\infty$ generated by Algorithm~\ref{alg_SGD} satisfies the Markov property, i.e. the distribution of the current state given information about previous states is equal to the distribution of the current state given only information about the most recent of the previous states.
By the particular structure we chose for the continuous-time processes $(\theta(t), \bsi(t))_{t > 0}$ and $(\xi(t), \bsj(t))_{t > 0}$, we indeed retain the Markov property. 
\begin{proposition}
$(\theta(t), \bsi(t))_{t \geq 0}$ and $(\xi(t), \bsj(t))_{t \geq 0}$ are \\ Markov processes.
\end{proposition}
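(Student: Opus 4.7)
The plan is to verify the Markov property for the joint process directly from its defining structure: a CTMP on $I$ drives a deterministic ODE on $X$. I will argue in detail for $(\theta(t),\bsi(t))_{t\geq 0}$; the case of $(\xi(t),\bsj(t))_{t\geq 0}$ is analogous, except that the CTMP is time-inhomogeneous and the resulting Markov process is inhomogeneous in time.

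First I would recall that $(\bsi(t))_{t\geq 0}$, being a CTMP with transition rate matrix $A$, is itself a homogeneous Markov process: for any $0 \leq s \leq t$, the conditional distribution of $(\bsi(u))_{u \geq s}$ given $(\bsi(r))_{r \leq s}$ depends on the past only through $\bsi(s)$, and is given by the semigroup $M_{t-s}$. Next, by Proposition~\ref{Prop_PicLind}, the Picard--Lindel\"of construction yields, for $\mathbb{P}$-almost every realisation of $(\bsi(u))_{u\geq s}$ and every $\theta_s \in X$, a unique solution $\theta$ of \eqref{eq_SGPC} on $[s,\infty)$ with $\theta(s)=\theta_s$. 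Consequently, there exists a measurable solution map
\begin{equation*}
\Psi_{s,t}: X \times D([s,t];I) \longrightarrow X, \qquad \theta(t) = \Psi_{s,t}\bigl(\theta(s),\, (\bsi(u))_{s \leq u \leq t}\bigr),
\end{equation*}
where $D([s,t];I)$ denotes the Skorokhod space of c\`adl\`ag $I$-valued paths. Measurability of $\Psi_{s,t}$ follows from the piecewise construction across the jump times of $\bsi$ together with continuous dependence on initial conditions within each smooth piece.

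Given these two ingredients, I would conclude as follows. Fix $0 \leq s_1 < \cdots < s_n \leq s \leq t$ and bounded measurable $f: X \times I \to \mathbb{R}$. Writing $\mathcal{F}_s^{\theta,\bsi}$ for the natural filtration of the pair, and using the tower property together with the Markov property of $(\bsi(u))$,
\begin{align*}
\mathbb{E}\bigl[f(\theta(t),\bsi(t)) \bigm| \mathcal{F}_s^{\theta,\bsi}\bigr]
&= \mathbb{E}\bigl[f\bigl(\Psi_{s,t}(\theta(s),\bsi_{[s,t]}),\, \bsi(t)\bigr) \bigm| \mathcal{F}_s^{\theta,\bsi}\bigr] \\
&= \mathbb{E}\bigl[f\bigl(\Psi_{s,t}(\theta(s),\bsi_{[s,t]}),\, \bsi(t)\bigr) \bigm| \theta(s),\bsi(s)\bigr],
\end{align*}
because $\theta(s)$ is already $\sigma(\theta(s))$-measurable while the conditional law of $\bsi_{[s,t]}$ given the past depends only on $\bsi(s)$. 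The right-hand side is a measurable function of $(\theta(s),\bsi(s))$ alone, establishing the Markov property.

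The main subtlety — and indeed the only genuine obstacle — is the measurability of the solution map $\Psi_{s,t}$ with respect to the product $\sigma$-algebra on $X \times D([s,t];I)$. This is handled by exploiting the piecewise-deterministic structure: between consecutive jumps of $\bsi$, $\Psi_{s,t}$ is just the classical (continuous, hence measurable) flow of an autonomous ODE, and the jump times are themselves measurable functionals of the driving path. Everything else in the argument is a standard application of conditioning. For $(\xi(t),\bsj(t))_{t\geq 0}$, the identical argument applies with $\Psi_{s,t}$ built from \eqref{eq_SGPD} and the Markov property of $(\bsj(t))$ taken in its inhomogeneous form using the kernels $M'_{t|s}$ from \eqref{eq_transitionKernel_decreasing_case}.
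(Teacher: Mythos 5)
Your proof is correct, but it takes a genuinely different route from the paper. The paper disposes of this proposition in one line by appealing to the general PDMP construction of Davis: it observes that the waiting times have the hazard-rate (failure-distribution) form \eqref{eq_failuredist}, which is exactly the structure under which \S 3 of \cite{Davis84} establishes the (strong) Markov property for the resulting piecewise-deterministic process. You instead give a direct, essentially self-contained verification: you factor the state as $\theta(t)=\Psi_{s,t}(\theta(s),\bsi_{[s,t]})$ via the Picard--Lindel\"of solution map, invoke the Markov property of the driving CTMP $(\bsi(t))_{t\geq 0}$, and conclude by conditioning. This is a sound argument; the only technical point, the measurability of $\Psi_{s,t}$ on $X\times D([s,t];I)$, you identify and handle adequately through the piecewise structure and continuous dependence on initial data. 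Note, though, that the paper's ``particular choice of waiting time distribution'' is still doing the real work one level down in your argument: it is precisely what makes $(\bsi(t))_{t\geq 0}$ and $(\bsj(t))_{t\geq 0}$ Markov in the first place, a fact you take as given from their definition as CTMPs with transition rate matrices $A$ and $B(t)$. What each approach buys: yours is elementary and makes transparent exactly which ingredients (Markov driving noise plus a deterministic flow map) yield the result; the paper's citation of Davis's framework additionally delivers the strong Markov property and the explicit form of the infinitesimal generator, which the paper relies on later (e.g.\ in the proof of Lemma~\ref{lemma_weak_conv}). Your treatment of the inhomogeneous pair $(\xi(t),\bsj(t))_{t\geq 0}$ via the kernels $M'_{t|t_0}$ is the right adaptation.
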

\begin{proof}  This follows from the particular choice of waiting time distribution, see e.g. the discussion in \S3 of \cite{Davis84}.
\end{proof}

Choosing random waiting times between switches allows us to analyse SGD as a PDMP. However, this choice comes at some cost. In Algorithm~\ref{alg_SGD}, the waiting times are all deterministic; a feature we, thus, do not represent in SGP. \revised{We briefly discuss a continuous-time version of SGD with deterministic waiting times in Remark~\ref{Rem_nonMarkovian} as a potential extension of the SGP framework, but do not consider it otherwise in this work.}
In the next two steps, we will\revised{, thus,} explain how we connect the deterministic waiting times in SGD and the random waiting times in SGP.

\paragraph{Constant learning rate.} We have defined $(\theta(t))_{t \geq 0}$ as a continuous\revised{-}time representation of the trajectory returned by Algorithm~\ref{alg_SGD} with a constant learning rate $\eta_k \equiv \eta$.
The hazard function of the waiting time distribution of $(\bsi(t))_{t \geq 0}$ is just constant $\nu \equiv (N-1)\lambda$. The waiting time $T$ is the time $(\bsi(t))_{t \geq 0}$ remains in a certain state. 
Note that the hazard function satisfies
\begin{equation*}
\nu(u) = \lim_{d \rightarrow 0} \frac{\mathbb{P}(u \leq T \leq u + d| T \geq u)}{d},
\end{equation*}
where $T$ is a waiting time; see, e.g., \S 21 in \cite{davis1993}. Hence, the hazard function describes the rate of events happening  at time $u \geq 0$.
In SGD with constant learning rate, the waiting time is constant $\eta$. Hence, the number of data switches in a unit interval is $1/\eta$.
Hence, we mimic this behaviour by choosing $\lambda$ in the matrix $A$ such that it satisfies $(N-1)\lambda = 1/\eta$. Indeed, we set $\lambda := 1/( (N-1)\eta)$.

\paragraph{Decreasing learning rate.}
Let now $(\eta_k)_{k=1}^\infty \in (0, \infty)^{\mathbb{N}}$ be a non-increasing sequence of learning rates, with $\lim_{k \rightarrow \infty} \eta_k = 0$. Moreover, we assume that $\sum_{k=1}^\infty \eta_k = \infty$.  Similarly to the last paragraph, we now try to find a rate function $(\mu(t))_{t \geq 0}$ such that the PDMP $(\xi(t))_{t \geq 0}$ represents the SGD algorithm with the sequence of learning rates $(\eta_k)_{k=1}^\infty$. 
To go from discrete time to continuous time, we need to define a function $H$ that interpolates the sequence of learning rates $\eta_k$, i.e. $H:[0,\infty) \rightarrow (0, \infty)$ is a non-increasing, \revised{continuously differentiable} function, such that 
$$
H(0) = \eta_1, \quad H\left(t_k\right) = \eta_{k+1}, \quad t_k := \sum_{\ell = 1}^k{\eta}_\ell \quad (k \in \mathbb{N}),
$$
where the $t_k$ are chosen like this, since the $\eta_k$ themselves represent the time stepsizes in the sequence of learning rates.
$H$ could for instance be chosen as a \revised{sufficiently smooth} interpolant between the $\eta_k$. Equivalently to the case of the constant learning rate, we now  argue via the hazard function of the waiting time distribution $\nu(t) := (N-1) \mu(t)$ $(t \geq 0)$ that $\mu(t) := 1/((N-1)H(t))$ \revised{$(t \geq 0)$} is a reasonable choice for the waiting time distribution.

\paragraph{Approximation of the exact gradient flow.} We now consider SGD, i.e. Algorithm~\ref{alg_SGD}. If the learning rate $\eta \downarrow 0$, we discretise the gradient flow precisely. Moreover, the waiting time between two data switches goes to zero. Hence, intuitively we switch the data set infinitely often in any finite time interval. By the Law of Large Numbers, we should then anticipate that the limiting process behaves like the \emph{full gradient flow}
\begin{equation} \label{eq:zeta_ode}
 \frac{\mathrm{d}\zeta(t)}{\mathrm{d}t} = - \nabla \hPhi(\zeta(t)),   
 \end{equation}
with initial value $\zeta(0) = \zeta_0  :=\theta_0$ as chosen in SGPC and $\hPhi := \sum_{i=1}^N \Phi_i/N$ being the full potential.

\revised{This behaviour can also be seen in the diffusion approximation to SGD \eqref{Eq_Diffusion_process}, where the stochastic part disappears as $\eta \downarrow 0$.}
 
 \revised{So we should now show that this is also true for SGPC}.  Indeed, we will give assumptions under which the \revised{SGPC} $(\theta(t))_{t \geq 0}$ converges weakly to $(\zeta(t))_{t \geq 0}$\revised{, as $\eta \downarrow 0$.}
\emph{Weak convergence} of  $(\theta(t))_{t \geq 0}$ to $(\zeta(t))_{t \geq 0}$ means that
\begin{equation} \label{eq_weak_conv}
\int_{\revised{\Omega}} F\left((\theta(t))_{t \geq 0}\right) \mathrm{d}\mathbb{P} \longrightarrow \int_{\revised{\Omega}} F\left( (\zeta(t))_{t \geq 0} \right) \mathrm{d}\mathbb{P} \qquad \revised{(\eta \downarrow 0)},
\end{equation}
for any bounded, continuous function $F$ mapping from $C^0([0, \infty); X)$ to $\mathbb{R}.$
Here, $C^0([0, \infty); X)$ is equipped with the supremum norm $\|f\|_\infty := \sup_{t \in [0, \infty)}\|f(t)\|$. We denote weak convergence by $(\theta(t))_{t \geq 0} \Rightarrow (\zeta(t))_{t \geq 0}$.

To show weak convergence, we need some stronger smoothness assumption concerning the potentials $\Phi_i$.
We denote the Hessian of $\Phi_i$ by $\mathrm{H} \Phi_i$ for $i \in I$.
\begin{assumption} \label{Ass_continuity} For any $i \in I$,
let $\Phi_i \in C^2(X; \mathbb{R})$ and let $\nabla \Phi_i, \mathrm{H}\Phi_i$ be continuous.
\end{assumption}
Please note that Assumption~\ref{Ass_Cinf} is already implied by Assumption~\ref{Ass_continuity}. 

\begin{theorem} \label{thm:ODE_limit}
Let $\theta_0= \zeta_0$ and let Assumption~\ref{Ass_continuity} hold, then \revised{ the stochastic gradient process $(\theta(t))_{t \geq 0}$ converges weakly to the full gradient flow $(\zeta(t))_{t \geq 0}$, as the learning rate $\eta \downarrow 0$; i.e. }
$
(\theta(t))_{t \geq 0} \Rightarrow (\zeta(t))_{t \geq 0}$, as $\eta \downarrow 0$.
\end{theorem}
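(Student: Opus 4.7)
The plan is to establish the stronger statement that for every fixed horizon $T>0$,
\begin{equation*}
\sup_{t\in[0,T]}\|\theta(t)-\zeta(t)\| \longrightarrow 0 \ \text{ in probability as } \eta\downarrow 0.
\end{equation*}
This implies weak convergence on $C^0([0,T];X)$ for every $T$, from which \eqref{eq_weak_conv} follows either by truncating the bounded continuous functional $F$ to depend only on $(\theta(t))_{t\in[0,T]}$, or by working with the compact--open topology on $C^0([0,\infty);X)$. The route is a pathwise Gr\"onwall comparison between \eqref{eq_SGPC} and \eqref{eq:zeta_ode}; the only probabilistic ingredient is a second-moment bound on a fluctuation integral obtained from the explicit transition kernel \eqref{eq_transitionKernel}.

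Subtracting the integrated ODEs,
\begin{equation*}
\theta(t)-\zeta(t) \;=\; -\!\int_0^t\!\bigl[\nabla\Phi_{\bsi(s)}(\theta(s)) - \nabla\Phi_{\bsi(s)}(\zeta(s))\bigr]\,\mathrm{d}s \;-\; \int_0^t\!\psi(s)\,\mathrm{d}s,
\end{equation*}
with $\psi(s):=\nabla\Phi_{\bsi(s)}(\zeta(s))-\nabla\hPhi(\zeta(s))$. Since $\zeta([0,T])$ is compact, pick $R$ with $\zeta([0,T])\subset\bar B(0,R-1)$, set $\tau_R:=\inf\{t\geq 0:\|\theta(t)\|>R\}$, and let $L_R$ be a common Lipschitz constant for $\nabla\Phi_1,\ldots,\nabla\Phi_N$ on $\bar B(0,R)$, which exists by Assumption~\ref{Ass_continuity}. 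On $[0,T\wedge\tau_R]$ the first integrand is bounded by $L_R\|\theta(s)-\zeta(s)\|$, so Gr\"onwall's inequality delivers
\begin{equation*}
\sup_{t\leq T\wedge\tau_R}\|\theta(t)-\zeta(t)\| \;\leq\; e^{L_R T}\sup_{t\leq T}\Bigl\|\int_0^t\psi(s)\,\mathrm{d}s\Bigr\|.
\end{equation*}

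The heart of the argument --- and the step I expect to be the main obstacle --- is controlling $\sup_{t\leq T}\|\int_0^t\psi(s)\,\mathrm{d}s\|$. Because $\bsi(s)\sim\mathrm{Unif}(I)$ for every $s$, $\mathbb{E}[\psi(s)]=0$. For $0\leq r\leq s$, conditioning on $\bsi(r)$ and substituting \eqref{eq_transitionKernel} gives the key identity
\begin{equation*}
\mathbb{E}\bigl[\nabla\Phi_{\bsi(s)}(\zeta(s))\,\big|\,\bsi(r)=i\bigr] - \nabla\hPhi(\zeta(s)) \;=\; e^{-\lambda N(s-r)}\bigl(\nabla\Phi_i(\zeta(s))-\nabla\hPhi(\zeta(s))\bigr),
\end{equation*}
so that $|\mathbb{E}\langle\psi(s),\psi(r)\rangle|\leq M_0^2\, e^{-\lambda N(s-r)}$, where $M_0:=\sup_{s\in[0,T]}\max_i\|\nabla\Phi_i(\zeta(s))-\nabla\hPhi(\zeta(s))\|$ is finite by continuity on the compact $\zeta([0,T])$. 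Since $\lambda N = N/((N-1)\eta)$, integrating over the triangle $0\leq r\leq s\leq T$ yields $\mathbb{E}\|\int_0^T\psi(s)\,\mathrm{d}s\|^2=O(\eta)$; transferring the supremum inside the expectation is routine (e.g., via $L^2$-Lipschitzness of $t\mapsto\int_0^t\psi$ with uniformly bounded integrand, or a Doob-type inequality for the associated martingale).

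Finally, I would close the bootstrap: combining the Gr\"onwall and fluctuation bounds gives $\sup_{t\leq T\wedge\tau_R}\|\theta-\zeta\|\to 0$ in probability; and on $\{\tau_R\leq T\}$ necessarily $\|\theta(\tau_R)-\zeta(\tau_R)\|\geq 1$, which by that same bound has probability $O(\eta)\to 0$. Hence $\mathbb{P}(\tau_R\leq T)\to 0$ and $\sup_{t\leq T}\|\theta-\zeta\|\to 0$ in probability for every $T$, yielding the claim. Aside from the fluctuation estimate, every step is classical Gr\"onwall plus standard stopping-time bookkeeping; the whole probabilistic content is concentrated in the exponential decay of the kernel \eqref{eq_transitionKernel}.
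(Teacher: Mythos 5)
Your argument is correct, but it follows a genuinely different route from the paper. The paper does not use Gr\"onwall at all: it casts SGPC into the averaging framework of Kushner's perturbed test function theory, verifies three abstract hypotheses (boundedness of the vector field on bounded sets, tightness of the c\`adl\`ag driving process $e_{\bsi(t)}$ on a finite set, and the averaging condition $\int_{\underline{t}}^{\overline{t}}\mathbb{E}[G(x,\xi^{\varepsilon}(s))-\bar G(x)\mid\cdot]\,\mathrm{d}s\to 0$, which it checks by exactly the exponential-kernel computation you perform), and then invokes Kushner's Theorem 4.1 as a black box. Your proof replaces that machinery with a pathwise comparison: the integrated difference, a stopping time $\tau_R$ to localise the Lipschitz constant, Gr\"onwall, and a second-moment bound on $\int_0^t\psi$ obtained from the covariance decay $|\mathbb{E}\langle\psi(s),\psi(r)\rangle|\leq M_0^2e^{-\lambda N(s-r)}$ --- the same identity the paper uses, but deployed quantitatively rather than as an input to an abstract averaging theorem. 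What your approach buys is elementarity and an explicit rate ($\mathbb{E}\|\int_0^T\psi\|^2=O(\eta)$, hence a rate for the path deviation on compacts), which the paper's qualitative statement does not provide; what the paper's approach buys is that tightness and identification of the limit are handled automatically by the general theorem, and the same framework is reused later for Proposition~\ref{Propo_conv_auxiliary_pro}. Two small points to tidy up if you write this in full: (a) the "sup inside the expectation" step should be made explicit (the pathwise $M_0$-Lipschitz bound on $t\mapsto\int_0^t\psi$ plus a grid of mesh $\eta^{1/3}$, or a Dynkin-martingale decomposition with Doob's inequality, both work); (b) both your proof and the paper's really establish convergence uniformly on compact time intervals, so the topology on $C^0([0,\infty);X)$ in \eqref{eq_weak_conv} should be read as the compact-open one rather than the global supremum norm --- this is a feature of the paper's formulation, not a defect of your argument.
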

We prove Theorem~\ref{thm:ODE_limit} rigorously in \S\ref{Appendix_Theorem_ODE_limit}. 
We illustrate the shown result in Figure~\ref{Fig_odelim}, where we can see that indeed as $\eta$ decreases, the processes converge to the \revised{full} gradient flow.

\begin{figure*}
\centering
\includegraphics[scale=0.85]{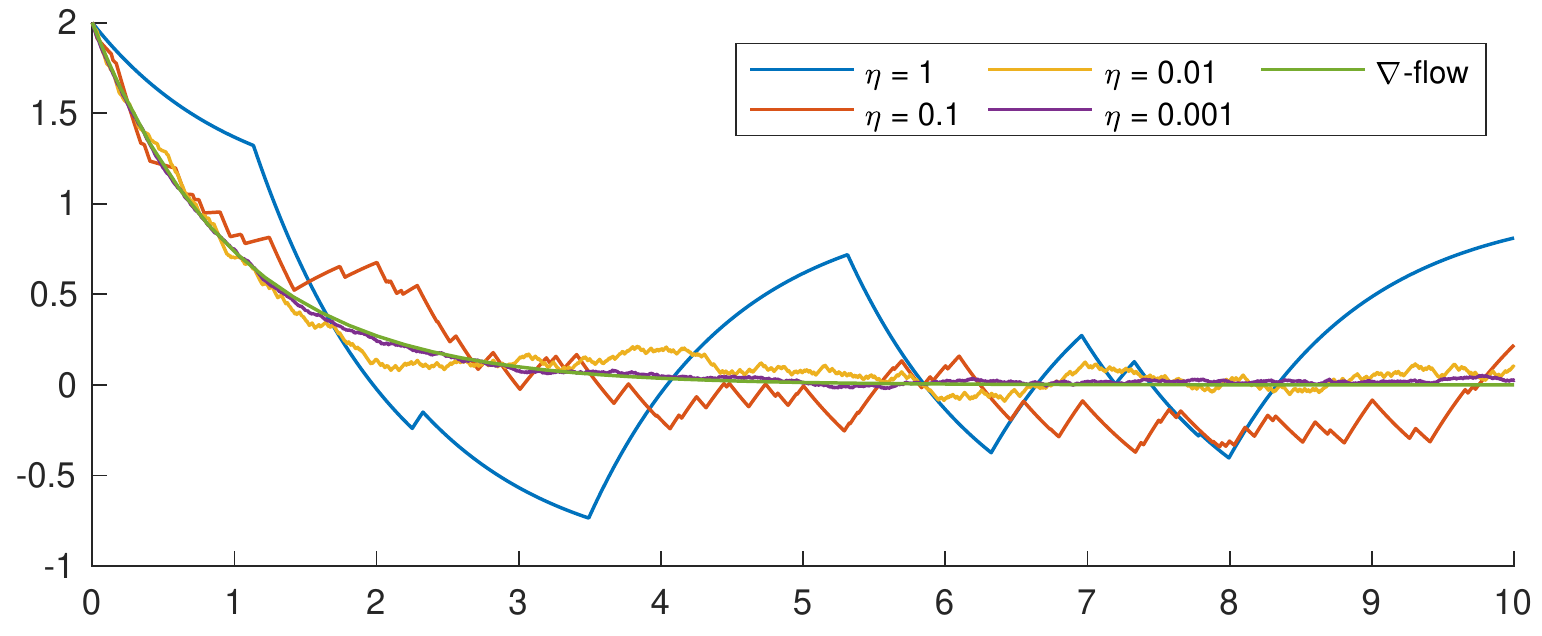}
 \caption{Exemplary realisations of SGPC for potentials $\Phi_1(\theta) := (\theta-1)^2/2$ and $\Phi_2(\theta) := (\theta+1)^2/2$ and learning rates $\eta \in \{0.001, 0.01, 0.1, 1 \}$ and a plot of the \revised{full} gradient flow corresponding to $\hPhi := \Phi_1/2 + \Phi_2/2$. The latter has $0$ as a stationary point. The ODEs are solved with \texttt{ode45} in \textsc{Matlab} - an explicit high-order Runge-Kutta method with adaptive discretisation step size.}
    \label{Fig_odelim}
\end{figure*}

Following our reasoning above, we assert that SGPC, resp. SGPD, are suitable continuous-time representations of SGD with constant, resp. decreasing, learning rate. 

\subsection{Proof of Theorem~\ref{thm:ODE_limit}} \label{Appendix_Theorem_ODE_limit}
We prove Theorem~\ref{thm:ODE_limit} using the \emph{perturbed test function theory}. In particular, we apply a result from \cite{Kushner1984} that we summarise below. We note that a similar technique is used to derive the \emph{Infinite Swapping Markov Chain Monte Carlo technique}; see \cite{Dupuis2012} for details from the statistical mechanics viewpoint and \cite{LMCNT20} for the discrete-time MCMC viewpoint. In the following, we adapt the notation of \cite{Kushner1984}.

Let $(\xi^{\varepsilon}(t))_{t \geq 0}$ be a right-continuous stochastic process on $Y \subseteq \mathbb{R}^{L}$ that depends on $\varepsilon > 0$. Moreover, let $G: X \times Y \rightarrow X$ and $\bar{G}: X \rightarrow X$ be vector fields on $X$. Moreover, let $x_0, x_0^{\varepsilon} \in X$.
Let now $(x^{\varepsilon}(t))_{t \geq 0}$ be the stochastic process generated by
$$
\frac{\mathrm{d}x^{\varepsilon}(t)}{\mathrm{d}t} = G(x^{\varepsilon}(t), \xi^{\varepsilon}(t)), \qquad x^{\varepsilon}(0) = x_0^{\varepsilon}.
$$
Moreover, let $(x(t))_{t \geq 0}$ solve the following ODE:
$$
\frac{\mathrm{d}x(t)}{\mathrm{d}t} = \bar{G}(x(t)), \qquad x(0) = x_0.
$$
We will now give assumptions under which $(x^{\varepsilon}(t))_{t \geq 0} \Rightarrow (x(t))_{t \geq 0}$ as $\varepsilon \downarrow 0$.

\begin{assumption} \label{Assum_kushner} We consider the following three assumptions:
\begin{itemize}
    \item[(i)] Let $G$ and $\nabla_x G$ be continuous and bounded on $X' \times Y$, where $X' \subseteq X$ is bounded,
    \item[(ii)] let $\bar{G}: X \rightarrow X$ be continuously differentiable and let for any $0 \leq \underline{t} < \overline{t} < \infty$ and $ x \in X$:
$$
\int_{\underline{t}}^{\overline{t}} \mathbb{E}[G(x, \xi^{\varepsilon}(s)) - \bar{G}(x)| \{\xi^{\varepsilon}(s') : s' \leq \underline{t} \}] \mathrm{d}s \rightarrow 0,
$$
in probability, as $\varepsilon \downarrow 0$, and
\item[(iii)] let $(\xi^{\varepsilon}(t))_{t \geq 0}$ be tight with respect to $\varepsilon$.
\end{itemize}
\end{assumption}
The associated result reads then:
\begin{theorem}[Kushner, 1984] \label{Thm_kushner} Let Assumption~\ref{Assum_kushner} (i)-(iii) hold. Moreover, let $x_0^{\varepsilon} \Rightarrow x_0$, as $\varepsilon \downarrow 0$.  Then, $(x^{\varepsilon}(t))_{t \geq 0} \Rightarrow (x(t))_{t \geq 0}$, as $\varepsilon \downarrow 0$.
\end{theorem}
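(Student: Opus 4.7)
The plan is to prove this classical result using the method of perturbed test functions combined with tightness and uniqueness, which is standard in weak-convergence theory for dynamical systems with fast random perturbations. The overall architecture is: (a) show the family $(x^{\varepsilon}(\cdot))_{\varepsilon > 0}$ is tight in $C^0([0,\infty); X)$; (b) characterise every weak subsequential limit as a solution of the ODE driven by $\bar G$ with initial condition $x_0$; (c) conclude via uniqueness of that ODE, given continuous differentiability of $\bar G$.

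For tightness, I would first localise: using Assumption~\ref{Assum_kushner}(i), pick a large ball $X'\subset X$ and observe that as long as $x^{\varepsilon}(t)\in X'$, the derivative $\dot x^{\varepsilon}(t)=G(x^{\varepsilon}(t),\xi^{\varepsilon}(t))$ is uniformly bounded. A standard Gronwall-type bound plus the convergence $x_0^{\varepsilon}\Rightarrow x_0$ then gives a uniform a~priori bound on trajectories over compact time intervals, in probability. Equicontinuity in probability follows from $|x^{\varepsilon}(t)-x^{\varepsilon}(s)|\le C|t-s|$ on the good set, so Prokhorov's theorem (together with Arzel\`a–Ascoli) yields tightness in $C^0([0,T];X)$ for each $T$; a diagonal argument extends to $[0,\infty)$.

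To identify any subsequential limit $\tilde x$, I would pass along a weakly convergent subsequence $x^{\varepsilon_n}\Rightarrow \tilde x$ and apply the Skorokhod representation theorem so that the convergence can be treated as almost sure on a common probability space. For a test function $f\in C^1_b(X)$, I would write, for $0\le \underline t\le \overline t$,
\begin{align*}
f(x^{\varepsilon}(\overline t))-f(x^{\varepsilon}(\underline t))
&=\int_{\underline t}^{\overline t}\nabla f(x^{\varepsilon}(s))\cdot G(x^{\varepsilon}(s),\xi^{\varepsilon}(s))\,\mathrm ds.
\end{align*}
The goal is to show this converges to $\int_{\underline t}^{\overline t}\nabla f(\tilde x(s))\cdot \bar G(\tilde x(s))\,\mathrm ds$. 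The naive replacement $G(x^{\varepsilon},\xi^{\varepsilon})\mapsto \bar G(x^{\varepsilon})$ is exactly the averaging statement in Assumption~\ref{Assum_kushner}(ii), but it is only stated for a fixed spatial argument $x$. The standard perturbed test function trick handles this by freezing $x^{\varepsilon}$ on a short mesh: partition $[\underline t,\overline t]$ into sub-intervals of length $\delta$, replace $x^{\varepsilon}(s)$ by its value at the left endpoint on each sub-interval (Lipschitz error controlled by Assumption~(i)), apply (ii) on each piece conditioned on the $\sigma$-algebra generated by $\xi^{\varepsilon}$ up to that endpoint, and then let $\varepsilon\downarrow 0$ followed by $\delta\downarrow 0$.

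The main obstacle is this conditional averaging step: one has to justify interchanging the conditional expectation with the Riemann sum and control the accumulation of errors uniformly on compact time intervals. This is where Assumption~\ref{Assum_kushner}(iii) is essential, because tightness of $(\xi^{\varepsilon})$ prevents mass escape and lets one truncate onto compact sets where continuity in (i) is uniform. Once this is done, every limit $\tilde x$ satisfies the integral equation
\begin{equation*}
\tilde x(t)=x_0+\int_0^t \bar G(\tilde x(s))\,\mathrm ds \qquad (t\ge 0),
\end{equation*}
almost surely. By Assumption~\ref{Assum_kushner}(ii), $\bar G$ is $C^1$ and hence locally Lipschitz, so Picard--Lindel\"of gives a unique deterministic solution $x(\cdot)$. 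Therefore all subsequential weak limits coincide with the deterministic curve $x(\cdot)$, and tightness upgrades this to full weak convergence $x^{\varepsilon}\Rightarrow x$ in $C^0([0,\infty);X)$.
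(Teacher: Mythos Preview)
Your proposal is a reasonable and essentially correct outline of the perturbed test function method, and this is precisely the approach the paper indicates: the paper does not give its own proof of this statement but simply cites \cite[Theorem 4.1]{Kushner1984}, noting that the proof uses the perturbed test function method. Your sketch of tightness via local boundedness of $G$, identification of subsequential limits by freezing $x^{\varepsilon}$ on a fine mesh and invoking the conditional averaging in Assumption~\ref{Assum_kushner}(ii), and uniqueness via Picard--Lindel\"of for the $C^1$ drift $\bar G$, is exactly the standard argument behind that reference.
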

\begin{proof} The proof uses the perturbed test function me\-thod; see \cite[Theorem 4.1]{Kushner1984}.
\end{proof}

To prove Theorem~\ref{thm:ODE_limit}, we now show that Assumption~\ref{Assum_kushner} (i)-(iii) hold for SGPC. Then, Theorem~\ref{Thm_kushner} will imply weak convergence.
\begin{proof}[{Proof of Theorem \ref{thm:ODE_limit}}]
We commence by transferring the SGPC set-up into the framework employed in this subsection.
Let $\bar{G} := \nabla \hPhi$, $Y:= [0,1]^N$, and $G(\theta, w) := \sum_{i=1}^N w_i\nabla \Phi_i(\theta)$. Moreover, we define $\varepsilon:= 1/\lambda $ and \\ $\xi^{\varepsilon}(t) := e_{\bsi(t)}$, where $e_i$ is the $i$-th unit-vector in $Y$. 
Then, we have $\nabla \Phi_{\bsi(t)} = G(\cdot, \xi^{\varepsilon}(t))$. Assumption~\ref{Assum_kushner}(i) is now immediately implied by Assumption~\ref{Ass_continuity}\revised{; note that any continuous function on $X =\mathbb{R}^K$ is bounded on a bounded subset of $X$}.
The tightness in Assumption~\ref{Assum_kushner}(iii) follows from \revised{$(\xi^{\varepsilon}(t))_{t \geq 0}$ being a  c\`adl\`ag process taking values in the finite set $\{e_i : i \in I \}$; see Theorem 16.8 from \cite{Billingsley1999}}.
To show Assumption~\ref{Assum_kushner}(ii), we employ the explicit representation of the transition kernel $M_t$ of $(\bsi(t))_{t \geq 0}$ given in \eqref{eq_transitionKernel}. Since $(\xi^{\varepsilon}(t))_{t \geq 0}$ is a Markov process and homogeneous in time, we assume without loss of generality that $\underline{t} = 0$. Let now $i_0 \in I$. Then, we have for $s \in [0, \overline{t}]$ the following expression for the conditional expectation:
\begin{align*}
    \mathbb{E}[G(x, \xi^{\varepsilon}(s)) &- \bar{G}(x)| \xi^{\varepsilon}(0) = e_{i_0}] \\ &= \sum_{i = 1}^N \left(\frac{1}{N} -\frac{1}{N}\exp(- N s/\varepsilon)\right)G(x, e_{i}) + \exp(- N s/\varepsilon)G(x, e_{i_0}) - \bar{G}(x) \\
    &= \left(G(x,e_{i_0})-\bar{G}(x)\right) \cdot \exp(- N s/\varepsilon).
\end{align*}
Now we integrate the resulting function on $[0, \overline{t}]$:
\begin{align*}
   \int_{0}^{\overline{t}} \mathbb{E}[G(x, \xi^{\varepsilon}(s)) - \bar{G}(x)| \xi^{\varepsilon}(0)= e_{i_0}] \mathrm{d}s  &=  \int_{0}^{\overline{t}} \left(G(x,e_{i_0})-\bar{G}(x)\right) \cdot \exp(-N s/\varepsilon)  \mathrm{d}s \\
   &= \left(G(x,e_{i_0})-\bar{G}(x)\right) \cdot \int_{0}^{\overline{t}}  \exp(-N s/\varepsilon)  \mathrm{d}s \\ &= \left(G(x,e_{i_0})-\bar{G}(x)\right) \cdot \frac{-\varepsilon}{N}\left(\exp(-N\overline{t}/\varepsilon) -1 \right) \rightarrow 0,
\end{align*}
as $\varepsilon \downarrow 0$. Since $i_0$ was arbitrary, we have
$$
\int_{\underline{t}}^{\overline{t}} \mathbb{E}[G(x, \xi^{\varepsilon}(s)) - \bar{G}(x)| \{\xi^{\varepsilon}(s') : s' \leq \underline{t} \}] \mathrm{d}s \rightarrow 0,
$$
almost surely, as $\varepsilon \downarrow 0$, which implies Assumption~\ref{Assum_kushner}(ii).
Finally, we note that $\theta(0) = \zeta(0)$, hence: $x_0^\varepsilon = x_0$, for $\varepsilon > 0$.
\end{proof}

\subsection{Stochastic gradient descent in nature} \label{Subs_Biolo}
PDMPs are popular models  for random or uncertain processes in biological systems; see Chapter 1 of \cite{Rudnicki2017} for an overview. 
In the following, we  briefly discuss a biological system that is modelled by a dynamical system that corresponds to the SGP. 
This model was proposed by Kussell and Leibler \cite{Kussell2005}.
The modelled biological system contains clonal populations that diversify to survive in randomly fluctuating environments.

\paragraph{Diversified bet-hedging.} In the following, we consider clonal populations, such as bacteria or fungi, that live in fluctuating environments, i.e., environments that are subject to temporal change. Examples are the fluctuation of temperature and light during the day-night-cycle or a different supply of nutrients; see  \cite{Arda2019,Canino-Koning2019}. We define the set of environments to be  $I := \{1,\ldots,N\}$. 
Here, populations typically adapt their phenotypes to retain a high fitness in any environment. 
If the fluctuations within $I$ are irregular or even random, the organisms in a  population cannot adapt to the changes in the environment sufficiently fast; see, e.g., \cite{Kussell2005}.
To prevent extinction and retain high fitness in such fluctuating environments, some populations employ so-called \emph{diversified bet-hedging} strategies; see, e.g., \cite{Haccou1995,Olofsson2009,Sasaki1995,Simovich1997}.
That means, rather than relying on homogeneous switching of phenotypes in the population, the population has heterogeneous phenotypes that are developed and switched based on the current environment $i \in I$ or even completely randomly.
\paragraph{A PDMP model.}
Next, we briefly explain the way  Kussell and Leibler \cite{Kussell2005} model the growth of this population and the phenotype distribution among its individuals.
Indeed, there is a set of $N$ phenotypes, which will be identical to $I$. Indeed, the $i$-th phenotype is the one with the highest fitness in environment $i$, for $i \in I$. 
The fluctuation between environments is modelled by a CTMP $(\bsi(t))_{t \geq 0}$ on $I$ with a certain transition matrix.
Let $\theta_0 \in X := \mathbb{R}^N$. 
Here, the $i$-th component $\theta^{(i)}_0$ of $\theta_0$ describes the number of organisms in the population having phenotype $i \in I$. 
Given we are currently in environment $k \in I$, we assume that organisms with phenotype $i$ grow at a rate $f^{(k)}_i \geq 0$ and that organisms switch from phenotype $i$ to $j$ at rate $H_{j,i}^{(k)}$.
Knowing this, we define
\begin{align*}
G_k := \mathrm{diag}(f^{(k)}) + H^{(k)}  = \begin{pmatrix}f^{(k)}_1+H_{1,1}^{(k)} & H_{1,2}^{(k)} &  \cdots  & H_{1,N}^{(k)} \\
H_{2,1}^{(k)} & f^{(k)}_2+H_{2,2}^{(k)} & \ddots  &\vdots  \\
\vdots & \ddots & \ddots & H_{N-1,N}^{(k)} \\
H_{N,1}^{(k)} & \cdots & H_{N,N-1}^{(k)} & f_N^{(k)} + H_{N,N}^{(k)} 
\end{pmatrix},
\end{align*}
where  $H_{i,i}^{(k)} = - \sum_{j \in I \backslash \{i\}}^N H_{j,i}$, for $i \in I$.
Given an initial vector $\theta_0 \in (0, \infty)^N$ of phenotypes, we can now model the amount of organisms with a particular phenotype via the dynamical system
\begin{equation}\label{Eq_Biol_PDMP}
\frac{\mathrm{d}\theta(t)}{\mathrm{d}t} = G_{\bsi(t)}\theta(t) , \qquad \qquad \theta(0) = \theta_0.
\end{equation}
The dynamical system \eqref{Eq_Biol_PDMP} is a Markov switching process closely related to SGP.
Indeed, we have a homogeneous ODE the right-hand side of which is switched according to a CTMP.

The different environments in the population model represent the different subsamples of the data set that are trained with SGP. 
While the population aims to reach a high fitness in the current environment, SGP aims to optimise an underlying model with respect to the partition of the data set that is currently subsampled.
Overall, SGP aims at solving a certain optimisation problem. In general there is not ad hoc an equivalent optimisation problem in the population dynamic:
Positive growth rates $(f^{(k)})_{k \in I}$ should lead to $$\sum_{j \in I} \theta^{(j)}(t) \rightarrow \infty,$$ as $t \rightarrow \infty$. Moreover, the flows in \eqref{Eq_Biol_PDMP} are likely no gradient flows with underlying scalar potential.
However, diversified bet-hedging strategies also overall aim at long-term high fitness; see \cite{Olofsson2009}.
Hence, both, SGP and diversified bet-hedging aim to enhance a system by enhancing this system in randomly switching situations.
Therefore, we believe that bet-hedging gives a good background for interpreting SGP. 


\section{Long-time behaviour} \label{Sec_longtime}
PDMPs have been subject of extensive studies throughout the last decades, ever since they were introduced by Davis \cite{Davis84}. Many of the results derived in the past also apply to SGP. 
Hence, the PDMP view of SGD gives us access to a large set of analytical tools. Those allow us to study  mixing properties or the long-time behaviour of the algorithm, such as convergence to stationary distributions and ergodicity.

In the following, we will use tools provided by \cite{Bakhtin2012,benaim2012:quant,benaim2015:qual,Cloez2015,Kushner1984} to study the long-time behaviour of SGP. 
Indeed, we will give assumptions under which the processes generated by SGPC and SGPD have a unique stationary measure and are \revised{ ergodic or } exponentially ergodic. For SGPD, we discuss especially the convergence to the minimum of $\hPhi$.
After proving our assertions, we discuss the required assumptions regarding linear least squares estimation problem\revised{s}.
\subsection{Preliminaries}
We collect some notation and basic facts that will be required in the following.
First, we define a distance measure on $X$ for some $q \in (0,1]$:
\begin{equation}
    {d}'( \theta, \theta') := \min \{1,  \|\theta - \theta' \|^q\} \qquad (\theta, \theta' \in X). \label{Eq_metric}
\end{equation}
Note that $d'$ is a metric on $X$ and $(X, d')$ forms a Polish space, i.e. it is separable and complete.
Let  $\pi, \pi'$ be two probability measures on $(X, \mathcal{B}X)$. We define the \emph{Wasserstein(-1) distance} between those measures by $${d}_{\mathrm{W}}(\pi, \pi') := \inf_{H \in \mathrm{Coup}(\pi, \pi')} \int_{X \times X}d'(\chi, \chi') \mathrm{d}H(\chi, \chi'),$$
where $\mathrm{Coup}(\pi, \pi')$ is the set of \emph{couplings} of $\pi, \pi'$. This is the set of probability measures $H$ on $(X \times X, \mathcal{B}X \otimes \mathcal{B}X)$, with $H(\cdot \times X) = \pi$ and $H(X \times \cdot) = \pi'$. Note that due to the boundedness of $d'$, the distance ${d}_{\mathrm{W}}$ is well-defined for any two $\pi, \pi'$ probability measures on $(X, \mathcal{B}X)$. Indeed, the boundedness of $d'$ also implies that convergence in ${d}_{\mathrm{W}}$ is equivalent to weak convergence on $(X, \mathcal{B}X)$. Finally, note that $d'$ being a metric implies that ${d}_{\mathrm{W}}$ is a metric as well. For details see Chapter 6 in the book by Villani \cite{Villani2009}.
Moreover, we define the \emph{Dirac measure} concentrated in $\theta_0 \in X$ by $\delta(\cdot - \theta_0) := \mathbf{1}[\theta_0 \in \cdot].$

Next, we define the \emph{flow} $\varphi_i: X \times [0, \infty) \rightarrow X$ associated to the $i$-th potential $\Phi_i$, for $i \in I$. In particular, $\varphi_i$ satisfies
\begin{align*}
    \frac{\mathrm{d}\varphi_i(\theta_0, t)}{\mathrm{d}t} = - \nabla \Phi_i\left(\varphi_i(\theta_0, t)\right), \qquad \qquad \varphi_i(\theta_0, 0) = \theta_0,
\end{align*}
for any $i \in I$ and $\theta_0 \in X$. Similarly, we  define the Markov kernels associated with the processes $(\theta(t))_{t > 0}$ and $(\xi(t))_{t > 0}$:
\begin{align*}
    \mathrm{C}_t(B|\theta_0, i_0) &= \mathbb{P}(\theta(t) \in B | \theta(0) = \theta_0, \bsi(0) = i_0)   \qquad \qquad (B \in \mathcal{B}X, i_0 \in I, \theta_0 \in X), \\
    \mathrm{D}_{t|t_0}(B|\xi_0, j_0) &= \mathbb{P}(\xi(t) \in B | \xi(t_0) = \xi_0, \bsj(t_0) = j_0)  \qquad \qquad(B \in \mathcal{B}X, j_0 \in I, \xi_0 \in X),
\end{align*}
where $t \geq t_0 \geq 0$.
We now note two different assumptions on the convexity of the $\Phi_i$; a weak and a strong version.
\begin{assumption}[Strong convexity]\label{Ass_Contr}
For every $i \in I$, there is a $\kappa_i \in \mathbb{R}$, with
\begin{equation} \label{eq_convexity}
\left\langle \theta_0 - \theta_0', \nabla \Phi_i(\theta_0) - \nabla\Phi_i(\theta_0')\right\rangle \geq\kappa_i\|\theta_0 - \theta_0' \|^2,
\end{equation}
with either
\begin{itemize}
\item[(i)]    $\kappa_1 + \cdots + \kappa_N > 0$ and \revised{for every $\theta_0 \in X$ there is some bounded $S \in \mathcal{B}X$, $S \ni \theta_0$, such that  $$\varphi_i(S,t) \subseteq S \qquad (i \in I, t \geq 0)$$} (weak) or 

 \item[(ii)] $\kappa_1 = \cdots =\kappa_N > 0$ (strong).
\end{itemize}
\end{assumption}
In the strong version, we assume that all of the potentials $\{\Phi_i\}_{i \in I}$ are strongly convex. 
In the weak version, strong convexity of some potentials is sufficient; however, we need to ensure additionally that none of the flows escapes to infinity. \revised{The set $S$, in which we trap the process, is called \emph{positively invariant} for $(\varphi_i)_{i \in I}$. The uniform strong convexity in  Assumption~\ref{Ass_Contr}(ii), indeed, implies the existence of such a set for all $\theta_0 \in X$.  }

Both, Assumption~\ref{Ass_Contr}(i) and (ii) are quite strong. As we have mentioned before,  optimisation problems in machine learning are often non-convex. However, we focus on convex optimisation problems in this study.
Strong convexity implies for instance that the associated flows contract exponentially:
\begin{lemma} \label{Lemma_convex}
Inequality \eqref{eq_convexity} for some $i \in I$ implies that the corresponding flows contract exponentially, i.e.
\begin{equation*} \label{Eq_contra}
\|\varphi_i(\theta_0, t) - \varphi_i(\theta_0', t) \| \leq \exp(-\kappa_i t) \|\theta_0 - \theta_0' \|.
\end{equation*} 
\end{lemma}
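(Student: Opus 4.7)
The plan is to differentiate the squared distance between the two flow trajectories, use the convexity hypothesis to bound its derivative, and then apply Grönwall's inequality. This is a very standard argument for strongly monotone flows, so I do not expect any genuine obstacle; the only care required is in making the differentiation rigorous.

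First I would fix $\theta_0, \theta_0' \in X$ and $i \in I$, and set $u(t) := \varphi_i(\theta_0, t) - \varphi_i(\theta_0', t)$. Because $\nabla \Phi_i$ is locally Lipschitz (Assumption~\ref{Ass_Cinf}), the Picard--Lindel\"of theorem (as already invoked in Proposition~\ref{Prop_PicLind}) implies that each trajectory is $C^1$ in $t$, hence so is $u$, and therefore $t \mapsto \|u(t)\|^2$ is also $C^1$.

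Next I would compute
\begin{align*}
\frac{\mathrm{d}}{\mathrm{d}t}\|u(t)\|^2 &= 2 \left\langle u(t), \frac{\mathrm{d}u(t)}{\mathrm{d}t} \right\rangle \\
&= -2 \left\langle \varphi_i(\theta_0, t) - \varphi_i(\theta_0', t),\, \nabla \Phi_i(\varphi_i(\theta_0, t)) - \nabla \Phi_i(\varphi_i(\theta_0', t)) \right\rangle,
\end{align*}
and then invoke inequality \eqref{eq_convexity} applied to the points $\varphi_i(\theta_0, t)$ and $\varphi_i(\theta_0', t)$ to obtain
\[
\frac{\mathrm{d}}{\mathrm{d}t}\|u(t)\|^2 \leq -2\kappa_i \|u(t)\|^2.
\]

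Finally I would apply Grönwall's inequality (in its differential form) to conclude $\|u(t)\|^2 \leq \|u(0)\|^2 \exp(-2\kappa_i t)$ for all $t \geq 0$, and take square roots to recover the stated bound $\|\varphi_i(\theta_0, t) - \varphi_i(\theta_0', t)\| \leq \exp(-\kappa_i t) \|\theta_0 - \theta_0'\|$. Note the argument works verbatim for any real $\kappa_i$ (positive, zero, or even negative), although of course the contractive interpretation requires $\kappa_i > 0$.
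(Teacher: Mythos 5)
Your proof is correct: differentiating $\|u(t)\|^2$, applying inequality \eqref{eq_convexity} along the two trajectories, and closing with Gr\"onwall is exactly the standard argument here, and your attention to the $C^1$-regularity of the trajectories and to the sign-independence of $\kappa_i$ is sound. The paper itself gives no argument at all --- it simply defers to Lemma 4.1 of \cite{Cloez2015}, whose proof is precisely this monotonicity-plus-Gr\"onwall computation --- so you have in effect unpacked the citation rather than taken a different route.
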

\begin{proof}
This is implied by Lemma 4.1 given in \cite{Cloez2015}.
\end{proof}
Given this background, we now study the ergodicity of SGP. We commence with the case of a constant learning rate.

\subsection{Constant learning rate} \label{Subsec_const}
Under Assumption~\ref{Ass_Contr}(i), the SGP  $(\theta(t), \bsi(t))_{t > 0}$ has a unique stationary measure $\pi_{\mathrm{C}}$ on $(Z, \mathcal{B}Z) := (X \times I, \mathcal{B}X \otimes 2^I)$ and it contracts with respect to this measure in the Wasserstein distance $d_{\mathrm{W}}$.
As the Markov process contracts exponentially, we say, the Markov process is \emph{exponentially ergodic}.
We now state this result more particularly:

\begin{theorem} \label{Thm_Hairer}
Let Assumptions~\ref{Ass_continuity} and \ref{Ass_Contr}(i) hold. Then, $(\theta(t), \bsi(t))_{t > 0}$ has a unique stationary measure $\pi_{\mathrm{C}}$ on $(Z, \mathcal{B}Z)$. Moreover, there exist $\kappa', c > 0$ and $q \in (0, 1]$, with
\begin{align*}
d_{\mathrm{W}}(\pi_{\rm C}(\cdot \times I), \mathrm{C}_t(\cdot|\theta_0, i_0))  \leq c \exp(-\kappa' t) \left(1 + \sum_{i \in I}\int_{X} \|\theta_0 - \theta' \|^q\pi_{\mathrm{C}}(\mathrm{d}\theta' \times \{i\})\right)
\end{align*}
for any $i_0 \in I$ and $\theta_0 \in X$.
\end{theorem}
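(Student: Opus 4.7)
The natural strategy is to invoke a general ergodicity theorem for piecewise-deterministic Markov processes of the type developed by Cloez and Hairer~\cite{Cloez2015} (with related results by Bena\"im et al.~\cite{benaim2012:quant,benaim2015:qual,Bakhtin2012}). These results require three ingredients: (a) ergodicity of the underlying CTMP with an identifiable stationary distribution, (b) a contraction estimate for a synchronous coupling of two copies of the process that share the same CTMP trajectory, and (c) a Lyapunov/positive-invariance condition preventing escape to infinity. Ingredient (a) is free: the explicit kernel~\eqref{eq_transitionKernel} shows that $M_t$ converges geometrically to $\mathrm{Unif}(I)$ with rate $N\lambda$.

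For ingredient (b) I would construct a synchronous coupling: let $(\theta(t))_{t \geq 0}$ and $(\theta'(t))_{t \geq 0}$ be two copies of SGPC started from $\theta_0, \theta_0' \in X$ and driven by the \emph{same} realisation of $(\bsi(t))_{t \geq 0}$. Between consecutive jumps of $\bsi$, iterated application of Lemma~\ref{Lemma_convex} gives
\begin{equation*}
\|\theta(t) - \theta'(t)\| \;\leq\; \exp\!\Bigl(-\int_0^t \kappa_{\bsi(s)}\,\mathrm{d}s\Bigr)\,\|\theta_0 - \theta_0'\|.
\end{equation*}
Because $(\bsi(s))_{s\geq 0}$ is stationary with marginal $\mathrm{Unif}(I)$, the ergodic theorem yields $t^{-1}\!\int_0^t \kappa_{\bsi(s)}\,\mathrm{d}s \to \bar\kappa := \frac1N\sum_{i\in I}\kappa_i > 0$ almost surely under Assumption~\ref{Ass_Contr}(i). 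Choosing a sufficiently small exponent $q \in (0,1]$, one can then bound
$$
\mathbb{E}\bigl[\min\{1, \|\theta(t) - \theta'(t)\|^q\}\bigr] \;\leq\; C\,e^{-\kappa' t}\,\min\{1, \|\theta_0 - \theta_0'\|^q\}
$$
for some $\kappa' > 0$, which is exactly a contraction of the semigroup in the Wasserstein distance $d_{\mathrm{W}}$ induced by the bounded metric $d'$ of~\eqref{Eq_metric}.

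Ingredient (c) is supplied by the positive-invariance condition in Assumption~\ref{Ass_Contr}(i): from any initial $\theta_0 \in X$, the process remains almost surely inside a bounded set $S \in \mathcal{B}X$, since every flow $\varphi_i$ preserves $S$ and the state variable is built by concatenating pieces of these flows. This prevents mass escape and provides the Lyapunov bound controlling the $\|\theta_0-\theta'\|^q$ term on the right-hand side of the stated estimate. Combining (a)--(c) with the fact that $(X, d')$ is Polish, a Banach-style fixed-point argument on the space of probability measures (which is itself complete under $d_{\mathrm{W}}$) then yields existence and uniqueness of the stationary measure $\pi_{\mathrm{C}}$ on $(Z, \mathcal{B}Z)$ together with the exponential convergence rate.

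The main obstacle is the delicate interplay between the possibly expanding flows (where $\kappa_i < 0$ is allowed under (i)) and the need for a global contraction on average. The non-standard metric $d'(\theta,\theta') = \min\{1, \|\theta-\theta'\|^q\}$ is precisely the device needed to absorb expansion during unfavourable phases: choosing $q$ small enough ensures that the expected multiplicative factor $\mathbb{E}\!\left[\exp\bigl(-q\int_0^t \kappa_{\bsi(s)}\,\mathrm{d}s\bigr)\right]$ still decays exponentially, even when individual $\kappa_i$ are negative. Rather than re-derive this from scratch, my plan is to cast SGPC into the framework of Theorem~1.4 of~\cite{Cloez2015} (or an analogous statement in~\cite{benaim2015:qual}) and verify that Assumptions~\ref{Ass_continuity} and~\ref{Ass_Contr}(i) imply its hypotheses, reading off the constants $c, \kappa', q$ from the cited result.
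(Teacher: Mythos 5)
Your proposal is correct and follows essentially the same route as the paper: both reduce the theorem to verifying the hypotheses of Theorem 1.4 in Cloez--Hairer (with uniqueness from Corollary 1.11 of Bena\"im et al.), namely ergodicity of the finite-state CTMP, on-average exponential contraction of the synchronously coupled flows via Lemma~\ref{Lemma_convex}, and a moment/invariance bound from the bounded invariant set in Assumption~\ref{Ass_Contr}(i). The only difference is cosmetic: you spell out the averaging mechanism (the ergodic theorem for $\int_0^t \kappa_{\bsi(s)}\,\mathrm{d}s$ and the role of the small exponent $q$) that the paper leaves inside the cited result.
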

The proof of this theorem follows similar lines as the proof of Theorem~\ref{Thm_Hairer2}. Thus, we prove both in \S\ref{Subsec_Proof_Hairer12}.
Note that in Theorem~\ref{Thm_Hairer}, $q$ influences the metric $d'$ that is defined in \eqref{Eq_metric} and that is part of the Wasserstein distance $d_{\rm W}$.
This result implies that SGPC converges very quickly to its stationary regime. 
For estimates of the constants in Theorem~\ref{Thm_Hairer}, we refer to \cite{benaim2012:quant}.
Determining the stationary measure $\pi_{\mathrm{C}}$ may be rather difficult in practice; see \cite{Costa1990,durmus2018piecewise}. We give numerical illustrations in \S\ref{Sec_NumIll}.

\subsection{Decreasing learning rate} \label{Sec_Longtime_Decr}
Next, we study the longtime behaviour of SGP with decreasing learning rate. Here, we are less interested in the convergence of SGP to some abstract probability measure. Instead,  we study the convergence of SGPD to the minimum $\theta^* \in X$ of the full potential $\hPhi$.
Hence, we aim to analyse the behaviour of 
$$
d_{\mathrm{W}}(\delta(\cdot - \theta^*), \mathrm{D}_{t|0}(\cdot|\xi_0, j_0)),
$$ as $t \rightarrow \infty$. 
Here, we have anticipated that the Dirac measure $\delta(\cdot - \theta^*)$ is the stationary measure of SGPD as $t \rightarrow \infty$. This can be motivated by Theorem~\ref{thm:ODE_limit} where SGPC converges to the full gradient flow, as $\eta \downarrow 0$.

Two aspects of SGPD imply that the analysis of this distance is significantly more involved than that of SGPC.
First, the process is inhomogeneous in time; a case hardly discussed in the literature. We use the following standard idea to solve this issue:
\begin{itemize}
\item[(i)] We  define a homogeneous Markov chain $(\xi'(t))_{t \geq 0}$ on an extended state space $X \times \mathbb{R}$ where the transition rate matrix of $(\bsj(t))_{t \geq 0}$ will not depend on time, but on the current position of $(\xi'(t))_{t \geq 0}$.
\end{itemize}
Second, as $t \rightarrow \infty$ the rate matrix $B(t)$ degenerates; the diagonal entries go to $-\infty$, the off-diagonal entries will go to $\infty$.  This case is not covered by \cite{Cloez2015} or related literature on PDMPs -- to the best of our knowledge. 
However, we were discussing a closely related problem in  Theorem~\ref{thm:ODE_limit}. To apply the perturbed test function theory, we require three fold actions:
\begin{itemize}
\item[(ii)] We define an auxiliary Markov jump process with bounded transition rate matrix. 
\item[(iii)] We show that the PDMP based on this Markov jump process converges to a unique stationary measure at exponential rate.  
\item[(iv)] We show that this stationary measure approximates $\delta(\cdot - \theta^*)$ at any precision. Also, we show that the auxiliary PDMP approximates SGPD.
\end{itemize}
Finally, we will obtain the following result:
\begin{theorem} \label{Thm_LS}
Let Assumptions~\ref{Ass_continuity} and \ref{Ass_Contr}(ii) hold.   Then,  there is a function $\alpha: [0,1) \rightarrow [0, \infty)$ that is continuous at $0$ and satifies $\alpha(0)= 0$. Moreover, for any $\varepsilon \in (0,1)$, we have a  probability measure $\pi_\varepsilon$ and constants $\kappa', c > 0$, $q \in (0, 1]$  such that 
\begin{align*}
d_{\mathrm{W}}(\delta(\cdot - \theta^*), \mathrm{D}_{t|0}(\cdot|\xi_0, j_0)) \leq c \exp(-\kappa' t) \left(1 + \sum_{j \in I}\int_{X} \|\xi_0 - \xi'' \|^q\pi_\varepsilon(\mathrm{d}\xi'' \times \{j\})\right) + \alpha(\varepsilon),
\end{align*}
for any $j_0 \in I$ and $\xi_0 \in X$.
\end{theorem}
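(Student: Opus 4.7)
The plan is to carry out the four-step programme (i)--(iv) sketched just before the statement. Fix $\varepsilon\in(0,1)$ and set $\lambda_\varepsilon := 1/((N-1)\varepsilon)$. The auxiliary process in step~(ii) will be the SGPC from Section~\ref{Subsec_const} with constant rate $\lambda_\varepsilon$, whose rate matrix is trivially bounded; Theorem~\ref{Thm_Hairer} (applicable since Assumption~\ref{Ass_Contr}(ii) implies (i)) yields a unique stationary measure $\pi_\varepsilon$ on $(Z,\mathcal{B}Z)$ together with exponential convergence of its Markov kernel $\mathrm{C}_t^\varepsilon$ to $\pi_\varepsilon$ in $d_W$ at some rate $\kappa'>0$. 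Step~(i), reformulating SGPD as a homogeneous Markov process on $X\times[0,\infty)\times I$ by appending a clock coordinate, is used only as a framework to justify the applicability of standard Markov-chain tools.

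The first substantive ingredient --- the ``$\pi_\varepsilon$ approximates $\delta(\cdot-\theta^*)$'' half of step~(iv) --- is
\begin{equation*}
\alpha(\varepsilon) := d_W\!\left(\delta(\cdot-\theta^*),\,\pi_\varepsilon(\cdot\times I)\right) \longrightarrow 0 \quad\text{as }\varepsilon\downarrow 0.
\end{equation*}
Under Assumption~\ref{Ass_Contr}(ii) the full gradient flow $(\zeta(t))_{t\geq 0}$ contracts exponentially to the unique minimiser $\theta^*$ of $\hPhi$, so $\delta(\cdot-\theta^*)$ is its invariant law. Theorem~\ref{thm:ODE_limit} gives weak convergence of the auxiliary SGPC to $(\zeta(t))_{t\geq 0}$ as $\varepsilon\downarrow 0$; combining this with the exponential ergodicity above, and with the contraction of individual flows from Lemma~\ref{Lemma_convex} to control the dependence on initial conditions uniformly, one can evaluate at a sufficiently large time $t$ to transfer the weak convergence to the stationary regime, i.e.\ $\pi_\varepsilon(\cdot\times I)\Rightarrow\delta(\cdot-\theta^*)$. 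Since $d'$ is bounded, weak convergence coincides with convergence in $d_W$, so $\alpha$ is continuous at $0$ with $\alpha(0)=0$.

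The remaining and main task is the complementary bound
\begin{equation*}
d_W\!\left(\pi_\varepsilon(\cdot\times I),\mathrm{D}_{t|0}(\cdot|\xi_0,j_0)\right)\leq c\exp(-\kappa' t)\Bigl(1 + \sum_{j\in I}\int_X\|\xi_0-\xi''\|^q\,\pi_\varepsilon(\mathrm{d}\xi''\times\{j\})\Bigr),
\end{equation*}
after which the triangle inequality with the previous display yields the theorem. The natural route is to exhibit a synchronous coupling between SGPD and a stationary copy of the auxiliary SGPC drawn from $\pi_\varepsilon$. Since $\mu(t)\to\infty$, there exists $T_\varepsilon$ with $\mu(t)\geq\lambda_\varepsilon$ on $[T_\varepsilon,\infty)$; on this interval, thinning the faster Poisson clock of SGPD at probability $\lambda_\varepsilon/\mu(t)$ gives a valid driving clock for the slower process, sharing the post-switch uniform index draw at thinned jump times forces the index components to coincide from the first such aligned switch onwards, and between jumps Lemma~\ref{Lemma_convex} contracts the position components under common potentials thanks to Assumption~\ref{Ass_Contr}(ii). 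The short initial window $[0,T_\varepsilon]$ contributes only through the multiplicative constant $c$. The principal technical obstacle will be to bookkeep the extra (non-thinned) SGPD jumps between aligned switches cleanly enough to preserve the precise right-hand-side structure inherited from Theorem~\ref{Thm_Hairer}; this is exactly the reason why introducing a bounded-rate auxiliary and reducing to Section~\ref{Subsec_const} is preferable to attempting to push the ergodic arguments through the inhomogeneous setting directly.
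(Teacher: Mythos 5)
Your overall architecture (auxiliary bounded-rate process, triangle inequality, $\alpha(\varepsilon):=d_W(\delta(\cdot-\theta^*),\pi_\varepsilon(\cdot\times I))$ handled via Theorem~\ref{thm:ODE_limit} and convergence of stationary measures) matches the paper's in spirit, but the central inequality you set yourself to prove is not merely technically delicate --- it is false. You claim $d_{\mathrm{W}}(\pi_\varepsilon(\cdot\times I),\mathrm{D}_{t|0}(\cdot|\xi_0,j_0))\leq c\exp(-\kappa' t)(\cdots)$ with no additive $\varepsilon$-term. Since the right-hand side tends to $0$ as $t\to\infty$ for each fixed $\varepsilon$, this would force $\mathrm{D}_{t|0}(\cdot|\xi_0,j_0)\to\pi_\varepsilon(\cdot\times I)$ in $d_{\mathrm{W}}$ for \emph{every} $\varepsilon\in(0,1)$ simultaneously; by uniqueness of limits all the $\pi_\varepsilon(\cdot\times I)$ would have to coincide (and equal $\delta(\cdot-\theta^*)$). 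But the stationary measure of a constant-rate SGPC with rate $\lambda_\varepsilon$ is genuinely $\varepsilon$-dependent and not a point mass (its spread scales with the learning rate, cf.\ Figure~\ref{Fig_toy_example} and Table~\ref{table_SGD_var}), so the bound cannot hold. The paper avoids this by splitting differently: the exponential decay is attributed to the \emph{auxiliary} process converging to \emph{its own} stationary measure, $d_{\mathrm{W}}(\pi_\varepsilon(\cdot\times I),\mathrm{D}^\varepsilon_{t|0})\leq c\exp(-\kappa't)(\cdots)$ (Theorem~\ref{Thm_Hairer2}), while the comparison between SGPD and the auxiliary process contributes only a \emph{uniform-in-time} additive error $d_{\mathrm{W}}(\mathrm{D}^\varepsilon_{t|0},\mathrm{D}_{t|0})\leq\alpha'(\varepsilon)$ (Proposition~\ref{Propo_conv_auxiliary_pro}(i)), which is then absorbed into $\alpha$. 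Your decomposition omits this third term, and no coupling can recover it.

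The coupling sketch itself also breaks down at exactly the point you flag. Thinning the faster clock aligns only a subset of the jump times; at every non-thinned jump the SGPD index moves while the stationary copy's index does not, so between aligned switches the two processes follow \emph{different} potentials. Lemma~\ref{Lemma_convex} gives contraction only for two trajectories of the \emph{same} flow $\varphi_i$; for distinct $i\neq j$ the flows have distinct fixed points and the distance between the trajectories is bounded away from zero rather than contracted. Since $\mu(t)\to\infty$, the fraction of non-thinned jumps tends to one, so the desynchronised stretches dominate and the argument cannot close. A further, secondary issue: a constant-rate auxiliary started at rate $\lambda_\varepsilon=1/((N-1)\varepsilon)$ differs drastically from SGPD at small $t$ (where the rate is $\mu(0)$), so even the uniform-in-time bound $\alpha'(\varepsilon)$ would be hard to obtain for your choice. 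The paper instead builds the auxiliary process by perturbing the clock ODE from $\dot\tau=-\tau$ to $\dot\tau_\varepsilon=\varepsilon-\tau_\varepsilon$, so that the two generators differ only by the term $\varepsilon\,\partial_\tau$; this is what makes the perturbed-test-function argument (Lemma~\ref{lemma_weak_conv}) deliver the uniform $O(\alpha'(\varepsilon))$ comparison, and it is the ingredient your plan is missing.
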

Hence, as $t \rightarrow \infty$, the state $\xi(t)$ of the SGPD converges weakly to the Dirac measure concentrated in the minimum $\theta^*$ of the full target function $\hPhi$.
To prove this theorem, we now walk  through steps (i)-(iv). 
Using several auxiliary results, we are then able to give a proof of Theorem~\ref{Thm_LS}.
\paragraph{(i) A homogeneous formulation.}
We now formulate the SGPD in a time-homogeneous fashion. Indeed, we define $(\xi'(t))_{t \geq 0} := (\xi(t), \tau(t))_{t \geq 0}$, with
\begin{align*}
\frac{\mathrm{d}\xi'(t)}{\mathrm{d}t} &= 
\begin{pmatrix}
\frac{\mathrm{d}\xi(t)}{\mathrm{d}t} \\
\frac{\mathrm{d}\tau(t)}{\mathrm{d}t} 
\end{pmatrix} 
= \begin{pmatrix}
- \revised{\nabla}\Phi_{\bsj(t)}(\xi(t)) \\
-\tau(t) \end{pmatrix} =: \revised{\vec{\Psi}}_{\bsj(t)}(\xi'(t)), \\ \xi'(0) &=  \begin{pmatrix}
\xi(0) \\
\tau(0) 
\end{pmatrix} = \begin{pmatrix}
\xi_0 \\
1
\end{pmatrix} =: \xi'_0
\end{align*}
and $(\bsj(t))_{t \geq 0}$ has transition rate matrix $$B'(\cdot) := B(-\log(\tau)).$$ One can see easily that this definition of SGPD is equivalent to our original Definition~\ref{Def_SGD_MP}(ii).
Note furthermore that the dynamic is defined such that if $\{ \nabla\Phi_i \}_{i \in I }$ satisfies Assumption \ref{Ass_Contr}(i) (resp. (ii)) $\{ \revised{\vec{\Psi}}_i \}_{i \in I }$ does as well.
\paragraph{(ii) An auxiliary PDMP.} Let $\varepsilon \in (0,1)$. We define the PDMP $(\bsj_\varepsilon(t), \xi_\varepsilon(t))_{t \geq 0}$ by
$$
\begin{pmatrix}
\frac{\mathrm{d}\xi_\varepsilon(t)}{\mathrm{d}t} \\
\frac{\mathrm{d}\tau_\varepsilon(t)}{\mathrm{d}t} 
\end{pmatrix} 
= \begin{pmatrix}
- \revised{\nabla}\Phi_{\bsj_\varepsilon(t)}(\xi(t)) \\
\varepsilon-\tau_{\varepsilon}(t) \end{pmatrix}, \qquad \begin{pmatrix}
\xi_\varepsilon(0) \\
\tau_\varepsilon(0) 
\end{pmatrix} = \begin{pmatrix}
\xi_0 \\
1
\end{pmatrix},
$$
where the Markov jump process $(\bsj_\varepsilon(t))_{t \geq 0}$ has transition rate matrix $B_\varepsilon(\cdot) := B(-\log(\tau_\varepsilon)).$ Note that -- as opposed to $B(\cdot)$ -- this transition rate matrix converges to 
$B(-\log(\varepsilon))$, as $t \rightarrow \infty$. Moreover, we define the Markov transition kernel of $(\xi_\varepsilon(t))_{t \geq 0}$ by $\mathrm{D}^{\varepsilon}_{t|t_0}$.

\paragraph{(iii) Ergodicity of the auxiliary process.}
The following theorem shows that the auxiliary process $(\xi_\varepsilon(t), \bsj_\varepsilon(t))_{t \geq 0}$ converges at exponential rate to its unique stationary measure.
\begin{theorem} \label{Thm_Hairer2}
Let Assumptions~\ref{Ass_continuity} and \ref{Ass_Contr}(ii) hold and let $\varepsilon > 0$. Then, $(\xi_\varepsilon(t), \bsj_\varepsilon(t))_{t > 0}$ has a unique stationary measure $\pi_{\varepsilon}$ on $(Z, \mathcal{B}Z)$. Moreover, there exist $\kappa', c > 0$ and $q \in (0, 1]$, with
\begin{align*}
d_{\mathrm{W}}(\pi_{\varepsilon}( \cdot \times I), \mathrm{D}^\varepsilon_{t|0}(\cdot|\xi_0, j_0))  \leq  c \exp(-\kappa' t) \left(1 + \sum_{j \in I}\int_{X} \|\xi_0 - \xi'' \|^q\pi_\varepsilon(\mathrm{d}\xi'' \times \{j\})\right) 
\end{align*}
for any $j_0 \in I$ and $\xi_0 \in X$.
\end{theorem}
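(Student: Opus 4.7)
The plan is to apply the exponential-ergodicity framework for switched flows of Cloez and Hairer \cite{Cloez2015} to the time-homogeneous PDMP $(\xi_\varepsilon(t), \tau_\varepsilon(t), \bsj_\varepsilon(t))$ on the extended state space $\tilde Z := X \times [\varepsilon, 1] \times I$. The auxiliary component $\tau_\varepsilon$ was introduced precisely so that the switching rates remain controlled: solving $\dot\tau_\varepsilon = \varepsilon - \tau_\varepsilon$ with $\tau_\varepsilon(0) = 1$ gives $\tau_\varepsilon(t) = \varepsilon + (1-\varepsilon) e^{-t} \in [\varepsilon, 1]$ for all $t \geq 0$, so the off-diagonal entries $\mu(-\log \tau_\varepsilon(t))$ of $B_\varepsilon$ lie in the compact interval $[\mu(0), \mu(-\log \varepsilon)]$, bounded and bounded away from zero. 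This is exactly the feature that is missing from the original SGPD, where $\mu(t) \to \infty$ leads to degeneracy of the switching mechanism.

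I would then verify the hypotheses needed for exponential ergodicity in the Wasserstein distance induced by $d'(x,y) = \min\{1,\|x-y\|^q\}$. Contractivity: by Assumption~\ref{Ass_Contr}(ii) every potential $\Phi_i$ is strongly convex with common constant $\kappa > 0$, so Lemma~\ref{Lemma_convex} gives that every individual flow $\varphi_i$ contracts at rate $\kappa$; the $\tau$-flow is also contracting at rate $1$ towards $\varepsilon$. Under a synchronous coupling, driven by the same switching path but started at two different initial conditions, the two trajectories on $X \times [\varepsilon,1]$ therefore converge exponentially in $\|\cdot\|$. Irreducibility of the switching chain: all off-diagonal entries of $B_\varepsilon$ are bounded below by $\mu(0) > 0$, yielding a uniform Dobrushin-type lower bound on the jump mechanism that is independent of the continuous component. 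Regularity: $\nabla\Phi_i$ is locally Lipschitz by Assumption~\ref{Ass_continuity}. With these three inputs, the Cloez-Hairer machinery delivers existence and uniqueness of an invariant measure $\tilde\pi_\varepsilon$ on $\tilde Z$ and an exponential Wasserstein contraction with constants $\kappa', c > 0$ and exponent $q \in (0,1]$, plus a Lyapunov-type term of precisely the form appearing on the right-hand side of the theorem.

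Finally I would project back to $Z = X \times I$: since $\tau_\varepsilon$ is deterministic and its unique limit as $t \to \infty$ is $\varepsilon$, the $X \times I$-marginal of $\tilde\pi_\varepsilon$ is the announced $\pi_\varepsilon$, and the initial value $\tau_\varepsilon(0) = 1$ is absorbed into the prefactor $c$. The parallel argument yielding Theorem~\ref{Thm_Hairer} uses the constant-rate chain driven by $A$ and exploits Assumption~\ref{Ass_Contr}(i) together with the positively invariant set $S$ to supply the (averaged) contractivity that Cloez-Hairer requires; no extended state space is needed there. I expect the main technical obstacle to be the careful bookkeeping around the $\tau_\varepsilon$-dependence of $B_\varepsilon$: the standard Cloez-Hairer statements are formulated either for rate matrices independent of the continuous state or depending on the $X$-component, whereas here the dependence is through a bounded, smooth, deterministic clock. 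Adapting the coupling construction and checking that the Dobrushin and contraction estimates are uniform in $\tau \in [\varepsilon,1]$ is the principal piece of work; all other ingredients are routine consequences of strong convexity and continuity of $\nabla\Phi_i$.
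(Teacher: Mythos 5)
Your proposal is correct and follows essentially the same route as the paper: both verify the hypotheses of the exponential-ergodicity result for switched flows with position-dependent switching rates (Theorem 3.3 of Cloez--Hairer), using the fact that $\tau_\varepsilon(t)=\varepsilon+(1-\varepsilon)e^{-t}$ stays in $[\varepsilon,1]$ so that the rates $\mu(-\log\tau_\varepsilon)$ are bounded above, bounded below by $\mu(0)>0$ (giving irreducibility of the minorising rate matrix $\underline{B}$) and Lipschitz in $\tau$, together with the uniform contraction of the flows from Assumption~\ref{Ass_Contr}(ii) via Lemma~\ref{Lemma_convex}. The adaptation to state-dependent rates that you flag as the main obstacle is already covered by the cited theorem, so no extra coupling work is needed beyond checking these uniform bounds; the only ingredient you leave implicit is the finite first-moment condition, which the paper obtains from Lemma 1.14 of the Bena\"im et al.\ reference.
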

As mentioned before, we give the proof of Theorem~\ref{Thm_Hairer2} in \S\ref{Subsec_Proof_Hairer12}.
Note that we now require Assumption~\ref{Ass_Contr}(ii), i.e., the strong version.

\paragraph{(iv) Weak convergence of the auxiliary process.} The last preliminary step consists in showing that the auxiliary process $(\xi_\varepsilon(t))_{t \geq 0}$ approximates the SGPD $(\xi(t))_{t \geq 0}$. Moreover, the same needs to hold for the respective stationary measures.
\begin{proposition} \label{Propo_conv_auxiliary_pro}
Let Assumptions~\ref{Ass_continuity} and \ref{Ass_Contr}(ii) hold. Then,
\begin{itemize}
\item[(i)] there is a  function $\alpha': [0,1) \rightarrow [0, \infty)$, that is continuous at $0$ and satisfies $\alpha'(0) = 0$, such that 
$$d_{\mathrm{W}}(\mathrm{D}^{\varepsilon}_{t|0}(\cdot|\xi_0,j_0), \mathrm{D}_{t|0}(\cdot|\xi_0,j_0)) \leq \alpha'(\varepsilon),$$
for any $j_0 \in I, \xi_0 \in X, t \geq t_0 \geq 0$,
\item[(ii)] there is a  function $\alpha'': [0,1) \rightarrow [0, \infty)$,  that is continuous at $0$ and satisfies $\alpha''(0) = 0$, such that  
$$d_{\mathrm{W}}(\delta(\cdot - \theta^*), \pi_\varepsilon(\cdot \times I)) \leq \alpha''(\varepsilon)
$$
\end{itemize}
\end{proposition}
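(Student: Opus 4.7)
The plan is to prove part (ii) first and then bootstrap to part (i); the shared tool is Theorem~\ref{thm:ODE_limit} (weak convergence of SGPC to the full gradient flow as the switching rate diverges) combined with Assumption~\ref{Ass_Contr}(ii), which makes $\hPhi$ strongly convex and so forces its gradient flow $\zeta$ to contract exponentially to $\theta^*$.

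For part (ii), I would use that $\tau_\varepsilon(t) = \varepsilon + (1-\varepsilon)e^{-t} \to \varepsilon$, so the marginal stationary measure $\pi_\varepsilon(\cdot \times I)$ coincides with the stationary measure of a homogeneous SGPC whose switching rate $\mu(-\log\varepsilon)$ diverges as $\varepsilon \downarrow 0$. Writing $\mathcal{L}_\varepsilon$ for the generator of the auxiliary process, the Lyapunov function $V(\xi) := \|\xi - \theta^*\|^2$ satisfies $\mathcal{L}_\varepsilon V \leq -2\kappa V + C$ uniformly in $\varepsilon$ thanks to the shared contraction constant $\kappa := \kappa_1 = \cdots = \kappa_N$, yielding a tightness bound on $\{\pi_\varepsilon\}_{\varepsilon \in (0,1)}$ that reduces the argument to compact starting sets. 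For any bounded $d'$-Lipschitz $f$ and any $T > 0$,
\begin{align*}
\left|\int f\,\mathrm{d}\pi_\varepsilon(\cdot\times I) - f(\theta^*)\right| \leq \mathbb{E}_{\xi_0 \sim \pi_\varepsilon}\bigl[|f(\xi_\varepsilon(T)) - f(\zeta(T;\xi_0))|\bigr] + \mathbb{E}_{\xi_0 \sim \pi_\varepsilon}\bigl[|f(\zeta(T;\xi_0)) - f(\theta^*)|\bigr],
\end{align*}
where $\zeta(\cdot;\xi_0)$ is the full gradient flow of $\hPhi$ started from $\xi_0$. The second summand is $O(e^{-\kappa T})$ by the contraction of $\zeta$ to $\theta^*$ uniformly on compacts; the first vanishes as $\varepsilon \downarrow 0$ with $T$ fixed by Theorem~\ref{thm:ODE_limit}. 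Choosing $T = T(\varepsilon) \to \infty$ sufficiently slowly produces the required $\alpha''$.

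For part (i), I split into short-time and long-time regimes. On $[0, T]$ the switching rates $\mu(-\log\tau)$ and $\mu(-\log\tau_\varepsilon)$ both remain bounded and differ by an amount of order $\varepsilon$, since $|\tau(t) - \tau_\varepsilon(t)| = \varepsilon(1-e^{-t}) \leq \varepsilon$ and $\mu$ is continuously differentiable. A synchronous coupling (shared Poisson base randomness with thinning, and common index choice whenever both chains jump) combined with a Gronwall-type estimate exploiting the local Lipschitz continuity of $(\nabla\Phi_i)_{i\in I}$ then gives $d_W(\mathrm{D}^\varepsilon_{t|0}, \mathrm{D}_{t|0}) \leq C(T)\varepsilon$ for $t \leq T$. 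For $t > T$, I show that both distributions are close to $\delta(\cdot - \theta^*)$: the auxiliary one via Theorem~\ref{Thm_Hairer2} and part (ii), and the original one via a parallel argument to part (ii), noting that in the original process the rate $\mu(t) \to \infty$ forces convergence in law to the full gradient flow on any window starting late enough, which itself contracts to $\theta^*$. The triangle inequality and a calibrated choice of $T = T(\varepsilon)$ then produce $\alpha'$.

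The main obstacle is the uniform-in-$t$ control in part (i): the original and auxiliary processes have switching-rate profiles that diverge dramatically as $t \to \infty$ (the former grows unboundedly, the latter stays bounded by $\mu(-\log\varepsilon)$), so no single direct coupling can remain tight on $[0, \infty)$. The workaround exploits the fact that uniform strong convexity forces \emph{both} processes to concentrate near the same point $\theta^*$ for large $t$, so the long-time discrepancy is controlled by $\alpha''(\varepsilon)$ plus exponentially small ergodic errors. Without Assumption~\ref{Ass_Contr}(ii) the fixed points of the individual flows $\varphi_i$ could scatter and this bootstrapping strategy would break down.
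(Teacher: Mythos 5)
Your part (ii) follows essentially the paper's route: identify $\pi_\varepsilon(\cdot\times I)$ with the stationary measure of a homogeneous SGPC whose switching rate $\mu(-\log\varepsilon)$ diverges (the paper's Lemma~\ref{Lemma_identical_stationar}), then pass to the full gradient flow limit and use its contraction to $\theta^*$ (Lemmas~\ref{Lemma_gradient_flow_stationar} and the Kushner limit-interchange machinery). Your explicit $T=T(\varepsilon)$ diagonal argument is a hands-on substitute for the paper's appeal to Theorem 6.5 of Kushner; the remaining loose end there is that Theorem~\ref{thm:ODE_limit} is stated for a fixed deterministic initial condition, whereas you need it for initial laws $\pi_\varepsilon$ that themselves vary with $\varepsilon$ — this is repairable via tightness and subsequences, and is precisely what the cited Kushner result packages up.

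Part (i) is where there is a genuine gap. Your long-time regime requires showing that the \emph{original} process satisfies $d_{\mathrm{W}}(\delta(\cdot-\theta^*),\mathrm{D}_{t|0}(\cdot|\xi_0,j_0))\to 0$ for $t>T(\varepsilon)$. That is exactly the conclusion of Theorem~\ref{Thm_LS} — the theorem this proposition exists to prove — and the ``parallel argument'' you gesture at (inhomogeneous rates $\mu(t)\to\infty$ force convergence to the full gradient flow on late windows) is the hard part of the entire section. The paper's whole architecture — freezing the rate at $\mu(-\log\varepsilon)$ via the auxiliary process, proving its ergodicity separately, and only then comparing — exists precisely because no off-the-shelf ergodicity result covers a PDMP whose transition rates degenerate as $t\to\infty$. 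By contrast, the paper's proof of (i) never touches the long-time behaviour of either process: it observes that the two generators differ only by the term $\varepsilon\,\partial f/\partial\tau$, applies the perturbed-test-function theorem (with perturbation $f^\varepsilon=f$) to get weak convergence of $(\xi_\varepsilon,\tau_\varepsilon,\bsj_\varepsilon)$ to $(\xi,\tau,\bsj)$ on path space, and then evaluates the bounded continuous functional $F(\xi,\tau,\bsj)=\bigl(\sup_{t\ge 0}\min\{1,\|\xi(t)\|\}\bigr)^q$ of the difference process to obtain a single bound $\alpha'(\varepsilon)$ valid uniformly in $t$. Your short-time coupling-plus-Gronwall estimate on $[0,T]$ is fine (indeed, since the drift of $\xi$ does not depend on $\tau$, a successful thinning coupling makes the trajectories identical), but you cannot close the argument for $t>T(\varepsilon)$ without either (a) proving Theorem~\ref{Thm_LS} directly, which defeats the purpose, or (b) replacing the long-time step by a $t$-uniform comparison of the two processes, which is what the paper's path-space functional $F$ accomplishes. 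A secondary issue: you also need the ergodicity constants $c,\kappa'$ of Theorem~\ref{Thm_Hairer2} to be uniform in $\varepsilon$ for your calibrated choice of $T(\varepsilon)$ to work, which is plausible under Assumption~\ref{Ass_Contr}(ii) but is not established anywhere.
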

The proof of Proposition~\ref{Propo_conv_auxiliary_pro} is more involved. We present our proof along with several auxiliary results in \S\ref{Appendix_Prop_approx}.

Given the results in (i)-(iv), we can proceed to proving the main result.
\begin{proof}[Proof of Theorem \ref{Thm_LS}]
Note that by the triangle inequality, we have
\begin{align*}
d_{\mathrm{W}} (\delta(\cdot - \theta^*), \mathrm{D}_{t|0}(\cdot|\xi_0, j_0))  \leq &d_{\mathrm{W}}(\delta(\cdot - \theta^*), \pi_\varepsilon(\cdot \times I)) \\&+  d_{\mathrm{W}}(\pi_{\varepsilon}( \cdot \times I), \mathrm{D}^\varepsilon_{t|0}(\cdot|\xi_0, j_0)) \\&+ d_{\mathrm{W}}(\mathrm{D}^{\varepsilon}_{t|0}(\cdot|\xi_0,j_0), \mathrm{D}_{t|0}(\cdot|\xi_0,j_0)).
\end{align*}
Now, we employ Theorem~\ref{Thm_Hairer2} and \revised{obtain \begin{align*}
d_{\mathrm{W}}&(\pi_{\varepsilon}( \cdot \times I), \mathrm{D}^\varepsilon_{t|0}(\cdot|\xi_0, j_0))  \leq  c \exp(-\kappa' t) \left(1 + \sum_{j \in I}\int_{X} \|\xi_0 - \xi'' \|^q\pi_\varepsilon(\mathrm{d}\xi'' \times \{j\})\right), 
\end{align*} for some $\kappa',c > 0$ and $q \in (0,1]$. Moreover, with} Proposition~\ref{Propo_conv_auxiliary_pro}\revised{, we can bound $$d_{\mathrm{W}}(\delta(\cdot - \theta^*), \pi_\varepsilon(\cdot \times I)) \leq \alpha''(\varepsilon)$$ and $$d_{\mathrm{W}}(\mathrm{D}^{\varepsilon}_{t|0}(\cdot|\xi_0,j_0), \mathrm{D}_{t|0}(\cdot|\xi_0,j_0)) \leq \alpha'(\varepsilon).$$} We finally obtain our assertion setting $\alpha := \alpha' + \alpha''$. 
\end{proof}

\subsection{Proofs of Theorem~\ref{Thm_Hairer} and Theorem~\ref{Thm_Hairer2}} \label{Subsec_Proof_Hairer12}

The proof of Theorem~\ref{Thm_Hairer} proceeds by showing the assumptions of Theorem 1.4 in \cite{Cloez2015}, which implies exponential ergodicity of the PDMP. Under the same assumptions, Corollary 1.11 of \cite{benaim2012:quant} implies uniqueness of the stationary measure.
We denote the necessary assumptions below, then we proceed with the proof.
\begin{assumption} \label{Ass_Hairer} We consider the following three assumptions:
\begin{itemize}
\item[(i)] the process $(\bsi(t))_{t \geq 0}$ is non-explosive, irreducible and positive recurrent,
\item[(ii)] the Markov kernels representing the different gradient flows ${\rm C}^{(i)}_t(\cdot|\theta_0) := \delta(\cdot - \varphi_i(\theta_0,t))$ are on average exponentially contracting in $d_{\rm W}$, i.e.
for any two probability measures $\pi, \pi'$ on $(X, \mathcal{B}X)$ satisfy
$$
d_{\rm W}(\pi {\rm C}^{(i)}_t, \pi' {\rm C}^{(i)}_t) \leq \exp(-\kappa_it) d_{\rm W}(\pi, \pi') \quad (i \in I)
$$
for any $t > 0$ and $\kappa_1 + \cdots +\kappa_N > 0$, and
\item[(iii)] the Markov kernel ${\rm C}_t$  has a finite first absolute moment, i.e.
$$\frac{1}{N}\sum_{i_0=1}^N\int \|\theta\| {\rm C}_t(\mathrm{d}\theta|\theta_0,i_0) < \infty,$$
for $t \geq 0$ and $\theta_0 \in X$.
\end{itemize}
\end{assumption}

\begin{proof}[Proof of Theorem~\ref{Thm_Hairer}]
Assumption~\ref{Ass_Hairer}(i) is satisfied by standard properties of \revised{homogeneous} continuous-time Markov processes on finite sets.
Assumption~\ref{Ass_Hairer}(ii) is implied by Assumption~\ref{Ass_Contr}(i); see also the proof of Lemma 2.2 in \cite{Cloez2015}: 
Let $G$ be \revised{a coupling} in $\mathrm{Coup}(\pi, \pi')$ and choose \revised{a coupling} $H \in \mathrm{Coup}(\pi {\rm C}^{(i)}_t, \pi' {\rm C}^{(i)}_t)$, such that
\begin{align*}
\int_{X \times X}d'(\chi, \chi') \mathrm{d}H(\chi, \chi') = \ \int_{X \times X}d'(\varphi_{i}(\chi,t), \varphi_{i}(\chi',t)) \mathrm{d}G(\chi, \chi').  
\end{align*}
By Assumption~\ref{Ass_Contr}(i) and Lemma~\ref{Lemma_convex}, we have 
\begin{align*}
\int_{X \times X}d'(\varphi_{i}(\chi,t), \varphi_{i}(\chi',t)) \mathrm{d}G(\chi, \chi') \leq  \exp(-\kappa_it) \int_{X \times X}d'(\chi, \chi') \mathrm{d}G(\chi, \chi')
\end{align*}
Thus, we have indeed the required contractivity in the Wasserstein distance:
\begin{align*}
d_{\rm W}(\pi {\rm C}^{(i)}_t, \pi' {\rm C}^{(i)}_t)  &\leq \int_{X \times X}d'(\chi, \chi') \mathrm{d}H(\chi, \chi')   \leq \exp(-\kappa_it) \int_{X \times X}d'(\chi, \chi') \mathrm{d}G(\chi, \chi')
\end{align*}
As $d_{\rm W}(\pi {\rm C}^{(i)}_t, \pi' {\rm C}^{(i)}_t)$ does not depend on $H$ and $G$, we finally obtain
$$
d_{\rm W}(\pi {\rm C}^{(i)}_t, \pi' {\rm C}^{(i)}_t)  \leq \exp(-\kappa_it) d_{\rm W}(\pi, \pi').
$$
Concerning Assumption~\ref{Ass_Hairer}(iii), we employ the boundedness of the flows in Assumption~\ref{Ass_Contr}(i). 
\end{proof}

Now we move on to the proof of Theorem~\ref{Thm_Hairer2}. It is conceptually similar to the proof of Theorem~\ref{Thm_Hairer}: It relies on proving the necessary assumptions of Theorem 3.3 in \cite{Cloez2015}. The uniqueness of the stationary measure follows under the same assumptions from Corollary 1.16 in \cite{benaim2012:quant}. 
We state these assumptions below.
\begin{assumption}\label{Ass_Hairer2} We consider the following four assumptions:
\begin{itemize}
\item[(i)]  there is a transition rate matrix $\underline{B}$ leading to a positive recurrent, irreducible Markov chain and 
$$
\underline{B}_{i,j} = \inf_{\tau \in (\varepsilon, 1]} B(\revised{-\log(\tau)})_{i,j} \qquad (i, j \in I, i \neq j),
$$
\item[(ii)]  the Markov kernels representing the different gradient flows ${\rm C}^{(i)}_t(\cdot|\theta_0) := \delta(\cdot - \varphi_i(\theta_0,t))$ are exponentially contracting in $d_{\rm W}$, i.e.
for any two probability measures $\pi, \pi'$ on $(X, \mathcal{B}X)$ satisfy
$$
d_{\rm W}(\pi {\rm C}^{(i)}_t, \pi' {\rm C}^{(i)}_t) \leq \exp(-\kappa_it) d_{\rm W}(\pi, \pi') \quad  (i \in I)
$$
for any $t > 0$ and $\kappa_1 = \cdots =\kappa_N > 0$,
\item[(iii)] the Markov kernel ${\rm D}^{\varepsilon}_{t|0}$ has a finite first absolute moment, i.e.
$$\frac{1}{N}\sum_{\revised{j}_0=1}^N\int \|\xi\|{\rm D}^{\varepsilon}_{t|0}(\mathrm{d}\xi|\xi_0,j_0) < \infty,$$
for $t\geq 0$ and $\xi_0 \in X$,
and
\item[(iv)] the transition rate matrix $B_{\varepsilon}$ is bounded in the sense that 
\begin{equation}
\sup_{\tau \in (\varepsilon,1]} \sup_{i \in I}\sum_{j \in I} B_{\varepsilon}(\tau)_{i,j} \mathbf{1}[i \neq j]   < \infty
\end{equation}
and  $\sup_{i \in I}\sum_{j \in I} B_{\varepsilon}(\tau)_{i,j} \mathbf{1}[i \neq j]$ is Lipschitz continuous with respect to $\tau \in (\varepsilon, 1]$.
\end{itemize}
\end{assumption}
Note that Assumption~\ref{Ass_Hairer2}(i)-(iii) closely correspond to Assumption~\ref{Ass_Hairer}.
\begin{proof}[Proof of Theorem~\ref{Thm_Hairer2}]
The matrix $\underline{B}$ in Assumption~\ref{Ass_Hairer2}(i) is given by $$
\underline{B}_{i,j}  = \mu(0), \quad \underline{B}_{i,i}  = -(N-1)\cdot \mu(0) \quad (i, j \in I, i \neq j)
$$
which indeed induces positive recurrent, irreducible conti\-nuous-time Markov chain.
Assumption~\ref{Ass_Hairer2}(ii) can be proven analogously to Assumption~\ref{Ass_Hairer}(ii) in the proof of Theorem~\ref{Thm_Hairer}. Here, Assumption~\ref{Ass_Contr}(i) is replaced by Assumption~\ref{Ass_Contr}(ii).
Next, we prove Assumption (iv). First, note that we can write 
\begin{align*}
\sup_{i \in I}\sum_{j \in I} B_{\varepsilon}(t)_{i,j} \mathbf{1}[i \neq j] = (N-1) \mu(\tau_{\varepsilon}(t)) = (N-1) \mu(-\log(\tau)).
\end{align*}
Boundedness and Lipschitz continuity of this function, follows from the boundedness of $\tau \in (\varepsilon, 1]$ and the \revised{continuous differentiability} of $\mu$.
Assumption~\ref{Ass_Hairer2}(iii) follows from Lemma 1.14 in \cite{benaim2012:quant} and Assumption~\ref{Ass_Hairer2}(iv).
\end{proof}
\subsection{Proof of Proposition~\ref{Propo_conv_auxiliary_pro}} \label{Appendix_Prop_approx}
In this subsection, we prove Proposition~\ref{Propo_conv_auxiliary_pro}.
First, we show weak convergence of $(\xi_{\varepsilon}(t))_{t \geq 0} \Rightarrow (\xi(t))_{t \geq 0}$ in the sense of \eqref{eq_weak_conv}.
Given this result, we will be able to construct the function $\alpha'$ and thus prove  Proposition~\ref{Propo_conv_auxiliary_pro}(i). Part (ii) of the proposition will rely on showing that $(\xi_{\varepsilon}(t))_{t \geq 0}$ approximates the underlying gradient flow, as discussed in Theorem~\ref{thm:ODE_limit}.
\begin{lemma} \label{lemma_weak_conv}
Let Assumption\revised{s}~\ref{Ass_continuity} \revised{and \ref{Ass_Contr}(ii)} hold. Then, $$(\xi_{\varepsilon}(t), \tau_{\varepsilon}(t), \bsj_{\varepsilon}(t))_{t \geq 0} \Rightarrow (\xi(t), \tau(t), \bsj(t))_{t \geq 0},$$ as $\varepsilon \downarrow 0$.
\end{lemma}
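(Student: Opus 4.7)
The plan is to construct an explicit pathwise coupling of $(\xi_\varepsilon, \tau_\varepsilon, \bsj_\varepsilon)$ and $(\xi, \tau, \bsj)$ on one probability space, arranged so that on every compact interval $[0, \overline{t}]$ the two sample paths coincide with probability tending to $1$ as $\varepsilon \downarrow 0$. Since the supremum distance between the paths then vanishes in probability, weak convergence in the Skorokhod topology over $[0, \overline{t}]$ follows, and letting $\overline{t} \to \infty$ yields weak convergence on $[0, \infty)$. As a first ingredient, I would solve the two linear scalar ODEs for the $\tau$-components: from $\tau(0) = \tau_\varepsilon(0) = 1$ one gets $\tau(t) = e^{-t}$ and $\tau_\varepsilon(t) = \varepsilon + (1-\varepsilon)e^{-t}$, so
\[
\sup_{t \in [0, \overline{t}]} |\tau_\varepsilon(t) - \tau(t)| \leq \varepsilon.
\]
Because $\mu$ is continuous and $-\log \tau_\varepsilon(\cdot), -\log \tau(\cdot)$ take values in a compact subset of $[0, \infty)$ on $[0, \overline{t}]$, the instantaneous jump rates $\nu_\varepsilon(t) := (N-1)\mu(-\log \tau_\varepsilon(t))$ and $\nu(t) := (N-1)\mu(-\log \tau(t))$ converge uniformly on $[0, \overline{t}]$ and share a common upper bound $\Lambda_{\overline{t}} < \infty$.

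Next I would couple $\bsj$ and $\bsj_\varepsilon$ through a standard thinning construction driven by a single Poisson point process $\mathcal{N}$ on $[0, \overline{t}] \times [0, \Lambda_{\overline{t}}]$ of unit intensity, equipped with independent i.i.d.\ uniform labels in $I$ at each epoch. At an epoch $(s, u)$ we accept a jump for $\bsj$ iff $u \leq \nu(s)$ and for $\bsj_\varepsilon$ iff $u \leq \nu_\varepsilon(s)$, using the \emph{same} uniform label to determine the new state. By construction each marginal reproduces the law of the prescribed inhomogeneous Markov jump process, and the two coupled processes can disagree on $[0, \overline{t}]$ only at epochs whose $u$-coordinate falls in the symmetric difference of $[0, \nu(s)]$ and $[0, \nu_\varepsilon(s)]$. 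The expected number of such epochs is
\[
\int_0^{\overline{t}} |\nu_\varepsilon(s) - \nu(s)|\, \mathrm{d}s,
\]
which vanishes as $\varepsilon \downarrow 0$ by the uniform convergence established above. Consequently $\mathbb{P}(\bsj_\varepsilon \equiv \bsj \text{ on } [0, \overline{t}]) \to 1$.

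Conditioned on this high-probability event, the $\xi$-dynamics of the two processes are driven by identical switching sequences with identical initial condition, and the right-hand side of the $\xi$-equation does not depend on the $\tau$-component in either system. Pathwise uniqueness from Proposition~\ref{Prop_PicLind} (global existence on $[0, \overline{t}]$ being guaranteed by Assumption~\ref{Ass_Contr}(ii), which keeps all gradient flows contracting hence bounded) then forces $\xi_\varepsilon \equiv \xi$ on $[0, \overline{t}]$, and combining this with $\|\tau_\varepsilon - \tau\|_\infty \leq \varepsilon$ gives convergence in probability of the whole triple in the supremum distance. The main technical point, and the only real obstacle, is justifying that the thinning procedure produces two bona fide PDMPs with the prescribed time-dependent rates simultaneously; this is a standard construction once the uniform bound $\nu_\varepsilon, \nu \leq \Lambda_{\overline{t}}$ has been secured and relies on the continuous differentiability of $\mu$ (and hence of $\nu, \nu_\varepsilon$) exactly as in the well-definedness proof of Proposition~\ref{Prop_PicLind}.
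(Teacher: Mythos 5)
Your proof is correct, but it takes a genuinely different route from the paper. The paper proves Lemma~\ref{lemma_weak_conv} with the perturbed-test-function machinery of Kushner (the same Theorem~\ref{Thm_kushner} framework used for Theorem~\ref{thm:ODE_limit}): it verifies tightness of $(\xi_\varepsilon(t))_{t\geq 0}$ via a uniform Lipschitz bound on the paths, takes $f^\varepsilon := f(\xi_\varepsilon,\tau_\varepsilon,\bsj_\varepsilon)$ as its own perturbed test function, and observes that the generators differ only by $\mathcal{A}_\varepsilon f - \mathcal{A}f = \varepsilon\,\partial_\tau f$, which is $O(\varepsilon)$ uniformly for $f\in C_c^2$. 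You instead build an explicit synchronous coupling: solving the scalar ODEs gives $\tau(t)=e^{-t}$, $\tau_\varepsilon(t)=\varepsilon+(1-\varepsilon)e^{-t}$, hence $\|\tau_\varepsilon-\tau\|_\infty\le\varepsilon$, and since the jump rates $\nu,\nu_\varepsilon$ are \emph{deterministic} functions of time (they do not depend on $\xi$), a thinning of one Poisson point process with shared labels makes the two jump processes agree on $[0,\overline{t}]$ except with probability at most $\int_0^{\overline{t}}|\nu_\varepsilon-\nu|\,\mathrm{d}s\to 0$; on that event pathwise uniqueness (Proposition~\ref{Prop_PicLind}) forces $\xi_\varepsilon\equiv\xi$. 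Your argument is more elementary and yields the strictly stronger conclusion of convergence in probability in the supremum distance on compacts, even with an explicit rate in $\varepsilon$ once a modulus of continuity for $\mu$ on $[0,\overline{t}]$ is fixed; the generator approach is less hands-on but more robust (it would survive state-dependent jump rates, where your thinning no longer decouples from $\xi$). One caveat you should make explicit: the coupling cannot be run on all of $[0,\infty)$, since $\nu(s)\to\infty$ while $\nu_\varepsilon$ stays bounded, so $\int_0^\infty|\nu_\varepsilon-\nu|\,\mathrm{d}s=\infty$; your ``let $\overline{t}\to\infty$'' step therefore delivers weak convergence only in the local-uniform (Skorokhod) sense rather than for the global sup norm of \eqref{eq_weak_conv} --- but this matches what the paper's own argument via Theorem~\ref{Thm_kushner} delivers, so it is not a defect relative to the paper.
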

\begin{proof}
Let $Z' := X \times \mathbb{R} \times \mathbb{R}$, let $\mathcal{A}$ be the (infinitesimal) generator of $(\xi(t), \tau(t), \bsj(t))_{t \geq 0}$, and let analogously $\mathcal{A}_{\varepsilon}$ be the generator of $(\xi_{\varepsilon}(t), \tau_{\varepsilon}(t), \bsj_{\varepsilon}(t))_{t \geq 0} $, for any $\varepsilon > 0$.
We will now employ Theorem 3.2 of \cite{Kushner1984} which implies our assertion, if
\begin{itemize}
\item[(i)] the family $(\xi_{\varepsilon}(t))_{t \geq 0, \varepsilon > 0}$ is tight with respect to $\varepsilon$,
\item[(ii)] for any $T \in (0, \infty)$ and any test function $f \in C'$ there is a `perturbed' test function $f^{\varepsilon}: [0, \infty) \rightarrow \mathbb{R}$, such that
\begin{align}
\sup_{\substack{t \geq 0 \\ \varepsilon \in (0,1]}}\mathbb{E}\left[\left\lvert f^\varepsilon(t)-f(\xi_{\varepsilon}(t), \tau_{\varepsilon}(t), \bsj_{\varepsilon}(t))\right\rvert \right] &< \infty, \label{eq_proofprop1st} \\
\lim_{\varepsilon \downarrow 0}\mathbb{E}\left[\left\lvert f^\varepsilon(t)-f(\xi_{\varepsilon}(t), \tau_{\varepsilon}(t), \bsj_{\varepsilon}(t)) \right\rvert \right] &=0  \qquad (t \geq 0), \label{eq_proofprop2nd} \\
\sup_{\substack{t \in (0, T] \\ \varepsilon \in (0,1]}}\mathbb{E}\left[\left\lvert \mathcal{A}_{\varepsilon}f^\varepsilon(t)-\mathcal{A}f(\xi_{\varepsilon}(t), \tau_{\varepsilon}(t), \bsj_{\varepsilon}(t))\right\rvert \right] &< \infty,  \label{eq_proofprop3rd} \\
\lim_{\varepsilon \downarrow 0}\mathbb{E}\left[\left\lvert \mathcal{A}_{\varepsilon}f^\varepsilon(t)-\mathcal{A}f(\xi_{\varepsilon}(t), \tau_{\varepsilon}(t), \bsj_{\varepsilon}(t)) \right\rvert \right] &=0  \qquad (0 \leq t \leq T). \label{eq_proofprop4th}
\end{align}
Here, $C'$ is uniformly dense in the space  $C^0_c(Z')$ of continuous functions with compact support.
\end{itemize}
First, note that  the generators are given by 
\begin{align*}
\mathcal{A}f(\xi,\tau,i) &:= \left\langle \begin{pmatrix}
- \revised{\nabla}\Phi_{i}(\xi) \\
-\tau \end{pmatrix}, \nabla_{\xi,\tau}f(\xi,\tau,i)\right\rangle  + \mu(-\log(\tau))\sum_{j \in I} \left( f(\xi, \tau, j) - f(\xi, \tau,i) \right), \\
\mathcal{A}_{\varepsilon}f(\xi,\tau,i) &:= \left\langle \begin{pmatrix}
- \revised{\nabla}\Phi_{i}(\xi) \\
\varepsilon-\tau \end{pmatrix}, \nabla_{\xi,\tau}f(\xi,\tau,i)\right\rangle  + \mu(-\log(\tau))\sum_{j \in I} \left( f(\xi, \tau, j) - f(\xi, \tau,i) \right),
\end{align*}
for any $f: Z' \rightarrow \mathbb{R}$ that is twice continuously differentiable and vanishes at infinity;
see, e.g., \cite{Davis84} for details. 
Here, we understand the processes $(\xi(t), \tau(t), \bsj(t))_{t \geq 0}$ and $(\xi_\varepsilon(t), \tau_{\varepsilon}(t), \bsj_{\varepsilon}(t))_{t \geq 0}$ as Markov jump diffusions.
Tightness in (i) follows from the boundedness of the \revised{gradient} in Assumption~\ref{Ass_continuity}\revised{: According to Theorem 2.4 in \cite{Kushner1984} (or, e.g., Theorem 7.3 in \cite{Billingsley1999}), we need to show that (i1), (i2) are satisfied by $(\xi_{\varepsilon}(t))_{t \geq 0}$:
\begin{itemize}
\item[(i1)] For all $\eta_* > 0$, there is an $N_* \in (0, \infty)$, with
$$
\mathbb{P}(\|\xi_{\varepsilon}(0)\| \geq N_*) \leq \eta_* \qquad (\varepsilon>0).
$$
\item[(i2)] For all $\eta_* >0$, $\varepsilon_*>0, \overline{t}>0$ there is $\delta_* > 0$ and an $n_0 \in (0, \infty)$, such that
$$
\mathbb{P}\left(\sup_{|s-t|< \delta_*, 0 \leq s \leq t \leq \overline{t}}\|\xi_{\varepsilon}(t)-\xi_{\varepsilon}(s)\| \geq \varepsilon_*\right)\leq \eta_*,$$ for $\varepsilon \in (0, n_0)$. 
\end{itemize} 
(i1) is satisfied as the initial value $\xi_{\varepsilon}(0)$ is $\mathbb{P}$-a.s. constant throughout $\varepsilon
>0$. To prove (i2), note that $(\xi_{\varepsilon}(t))_{t \geq 0}$ has $\mathbb{P}$-a.s. continuous paths that are almost everywhere differentiable. Let  $B \subseteq X$ be a closed ball with $\mathbb{P}(\xi_{\varepsilon}(t) \in B) = 1$ $(t \geq 0)$; see Lemma 1.14 in \cite{benaim2012:quant}. The derivative of $(\xi_{\varepsilon}(t))_{t \geq 0}$ is  bounded by some finite $$L \geq \sup_{i \in I, \theta_0 \in B}\|\nabla \Phi_i(\theta) \|,$$ as the $(\nabla \Phi_i)_{i \in I}$  are continuous. Importantly, $L$ does not depend on $\varepsilon$. 
Hence, we have
$$
\|\xi_{\varepsilon}(t)-\xi_{\varepsilon}(s)\| \leq L |t-s| 
$$
$\mathbb{P}$-a.s. for $0 \leq s \leq t$. This implies 
$$
\sup_{|s-t| < \delta_*, 0\leq s\leq t}\|\xi_{\varepsilon}(t)-\xi_{\varepsilon}(s)\| \leq L \delta_* 
$$
$\mathbb{P}$-a.s. for any $\delta_{*}>0$.
Thus, we get for any $\varepsilon_*>0, \overline{t}>0$: $\delta_* := \varepsilon_*/L$ and
\begin{align*}
\mathbb{P}&\left(\sup_{|s-t| <\delta_*, 0 \leq s \leq t \leq \overline{t}}\|\xi_{\varepsilon}(t)-\xi_{\varepsilon}(s)\| \leq  \varepsilon_*\right) =1.
\end{align*}
This implies 
\begin{align*}
\mathbb{P}&\left(\sup_{|s-t| < \delta_*, 0 \leq s \leq t \leq \overline{t}}\|\xi_{\varepsilon}(t)-\xi_{\varepsilon}(s)\| > \varepsilon_*\right) =0,
\end{align*}
which means
\begin{align*}
0 = \mathbb{P}\left(\sup_{|s-t| < 2\delta_*, 0 \leq s \leq t \leq \overline{t}}\|\xi_{\varepsilon}(t)-\xi_{\varepsilon}(s)\| \geq 2\varepsilon_*\right)  \geq \mathbb{P}\left(\sup_{|s-t|\leq 2\delta_*, 0 \leq s \leq t \leq \overline{t}}\|\xi_{\varepsilon}(t)-\xi_{\varepsilon}(s)\| > \varepsilon_*\right)
\end{align*}
giving us (i2).
}

To prove (ii), we choose the test space $C' := C^2_{\revised{c}}(Z')$, which
is the space of twice continuously differentiable functions that \revised{have compact support} and that have bounded $C^2$-sup-norm. Note that the Stone-Weierstrass Theorem for locally compact $Z'$ implies that $C^2_{\revised{c}}(Z')$ is uniformly dense in $C^0_0(Z')$\revised{; see, e.g., Corollary 4.3.5 in \cite{Pedersen1989}}. Thus, $C^2_{\revised{c}}(Z')$ is also uniformly dense in $C^0_c \subseteq C^0_0$.

Now, for any test function $f \in C'$ we choose the perturbed test function $f^\varepsilon(t) := f(\xi_{\varepsilon}(t))$, $t \geq 0, \varepsilon \in (0,1]$. Then, 
we have $f^\varepsilon-f(\xi_{\varepsilon}) \equiv 0$, for any $\varepsilon \in (0,1]$. Hence, \eqref{eq_proofprop1st} and \eqref{eq_proofprop2nd} are satisfied. Now towards \eqref{eq_proofprop3rd} and \eqref{eq_proofprop4th}. For $\varepsilon > 0$ and $t \in [0, T]$, we compute 
\begin{align*}
\mathcal{A}_{\varepsilon}f^\varepsilon(t)-&\mathcal{A}f(\xi_{\varepsilon}(t), \tau_{\varepsilon}(t), \bsj_{\varepsilon}(t)) = \varepsilon \cdot \frac{\partial}{\partial \tau}f(\xi_{\varepsilon}(t), \tau_{\varepsilon}(t), \bsj_{\varepsilon}(t)).
\end{align*}
By assumption the partial derivatives of $f$ are bounded. Hence, we obtain
$$\mathbb{E}\left[\left\lvert \mathcal{A}_{\varepsilon}f^\varepsilon(t)-\mathcal{A}f(\xi_{\varepsilon}(t), \tau_{\varepsilon}(t), \revised{\bsj}_{\varepsilon}(t))\right\rvert \right]  \leq \varepsilon \sup_{z' \in Z'}\left\lvert\frac{\partial f(z')}{\partial \tau}\right\rvert,$$
where the supremum on the right-hand side is finite, as $f \in C'$.
This proves \eqref{eq_proofprop3rd}, \eqref{eq_proofprop4th} and concludes the proof.
\end{proof}
We can now employ Lemma~\ref{lemma_weak_conv} to find an appropriate bound for the Wasserstein distances in the first part of Proposition~\ref{Propo_conv_auxiliary_pro}. 
\begin{proof}[{Proof of Proposition~\ref{Propo_conv_auxiliary_pro} (i)}]
From Lemma~\ref{lemma_weak_conv}, we know that $(\xi_{\varepsilon}(t), \tau_{\varepsilon}(t), \bsj_{\varepsilon}(t))_{t \geq 0} \Rightarrow (\xi(t), \tau(t), \bsj(t))_{t \geq 0}$, as $\varepsilon \downarrow 0$. Note that this is equivalent to $(\xi_{\varepsilon}(t), \tau_{\varepsilon}(t), \bsj_{\varepsilon}(t))_{t \geq 0} - (\xi(t), \tau(t), \bsj(t))_{t \geq 0}  \Rightarrow 0$.
We now construct the function $\alpha'(\cdot)$. Let
$$
F(\xi,\tau, \bsj) :=  \left(\sup_{t \geq 0} \min\{1, \| \xi(t) \|\}\right)^q,
$$
where   $ (\xi, \tau, \bsj) \in C^0([0, \infty); Z')$.
$F$ is bounded and continuous on $(C^0([0, \infty); Z), \|\cdot \|_\infty)$, since
$$F(\xi, \tau, \bsj)= \begin{cases}
1, &\text{ if } \|\xi\|_{\infty} > 1, \\
\| \xi\|_{\infty}^q, &\text{ if } \|\xi \|_{\infty} \leq 1
\end{cases}$$
is continuous for any $(\xi, \tau, \bsj) \in C^0([0, \infty); Z')$. 
 The weak convergence of $$(\xi_{\varepsilon}(t), \tau_{\varepsilon}(t), \bsj_{\varepsilon}(t))_{t \geq 0} - (\xi(t), \tau(t), \bsj(t))_{t \geq 0}  \Rightarrow 0$$ implies
$$
\mathbb{E}\left[ F \left( (\xi_{\varepsilon}(t), \tau_{\varepsilon}(t), \bsj_{\varepsilon}(t))_{t \geq 0} - (\xi(t), \tau(t), \bsj(t))_{t \geq 0}  \right)\right] \rightarrow 0,
$$
as $\varepsilon \downarrow 0$.
Now, the definition of the Wasserstein distance and the monotonicity of the integral imply for any $t \geq 0$:
\begin{align*}
d_{\mathrm{W}}(\mathrm{D}^{\varepsilon}_{t|0}(\cdot|\xi_0,j_0), \mathrm{D}_{t|0}(\cdot|\xi_0,j_0)) &\leq \mathbb{E}[\min\{1, \| \xi(t) - \xi_\varepsilon(t) \|^q\}] \\
&\leq \mathbb{E}\left[ F \left( (\xi_{\varepsilon}(t), \tau_{\varepsilon}(t), \bsj_{\varepsilon}(t))_{t \geq 0} - (\xi(t), \tau(t), \bsj(t))_{t \geq 0}  \right)\right] 
\end{align*}
Hence, we obtain the desired results by setting \\ $\alpha'(\varepsilon) := \mathbf{1}[\varepsilon > 0]  \mathbb{E}\left[ F \left( (\xi_{\varepsilon}(t)-\xi(t), \tau_{\varepsilon}(t)-\tau(t), \bsj_{\varepsilon}(t)-\bsj(t))_{t \geq 0} \right)\right]$.
\end{proof}

To prove the second part of this proposition, we proceed as follows:
we argue that the auxiliary process $(\xi_{\varepsilon}(t), \tau_{\varepsilon}(t), \bsj_{\varepsilon}(t))_{t \geq 0}$ behaves in its stationary regime like the SGPC setting with $\lambda := \mu(-\log(\varepsilon))$ in Lemma~\ref{Lemma_identical_stationar}. 
Then, however, we can show with Theorem~\ref{thm:ODE_limit}, that the process behaves like the \revised{full} gradient flow, as $\varepsilon \downarrow 0$. In Lemma~\ref{Lemma_gradient_flow_stationar}, we remind ourselves that the \revised{full} gradient flow has $\delta(\cdot - \theta^*)$ as a stationary measure. Finally, to prove Proposition~\ref{Propo_conv_auxiliary_pro}(ii) it will suffice to show that in Theorem~\ref{thm:ODE_limit}, also the corresponding stationary measures converge weakly.

\begin{lemma} \label{Lemma_identical_stationar} Let Assumptions~\ref{Ass_continuity} and \ref{Ass_Contr}(ii) hold. Moreover, let $\lambda := \mu(-\log(\varepsilon))$, let $\pi_C$ be the stationary distribution of $(\theta(t), \bsi(t))_{t \geq 0}$, and let $\pi_{\varepsilon}$ be the stationary distribution of $(\xi_{\varepsilon}(t), \tau_{\varepsilon}(t), \bsj_{\varepsilon}(t))_{t \geq 0}.$
Then, $$\pi_{C}(A \times J) = \pi_{\varepsilon}(A \times \{\varepsilon \} \times J), $$
for any $A \in \mathcal{B}X$ and $J \subseteq I$.
\end{lemma}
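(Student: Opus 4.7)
The plan is to exploit the fact that $\tau_\varepsilon$ is essentially deterministic in the stationary regime, which decouples the dynamics of the auxiliary process from the dynamics of SGPC. The ODE $\frac{\mathrm{d}\tau_\varepsilon}{\mathrm{d}t} = \varepsilon - \tau_\varepsilon$ has the explicit solution $\tau_\varepsilon(t) = \varepsilon + (\tau_\varepsilon(0)-\varepsilon)\mathrm{e}^{-t}$, so $\tau_\varepsilon(t) \to \varepsilon$ deterministically as $t \to \infty$. Consequently any invariant measure $\pi_\varepsilon$ of $(\xi_\varepsilon,\tau_\varepsilon,\bsj_\varepsilon)$ must assign full mass to $\{\tau=\varepsilon\}$, i.e.\ it must be of the form $\pi_\varepsilon(\,\cdot \times \,\cdot\, \times\, \cdot\,) = \tilde{\pi}_\varepsilon(\,\cdot \,\times\, \cdot\,)\otimes \delta(\cdot-\varepsilon)$ for some probability measure $\tilde{\pi}_\varepsilon$ on $(X\times I,\mathcal{B}X\otimes 2^I)$. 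I would prove this first, either by a direct invariance check using the generator $\mathcal{A}_\varepsilon$ applied to test functions depending only on $\tau$, or by using the exponential contraction to $\varepsilon$ together with uniqueness of $\pi_\varepsilon$ from Theorem~\ref{Thm_Hairer2}.

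Once $\tau_\varepsilon \equiv \varepsilon$ in the stationary regime, I would observe that the restricted dynamics of $(\xi_\varepsilon,\bsj_\varepsilon)$ on $\{\tau=\varepsilon\}$ coincide \emph{exactly} with SGPC for the choice $\lambda=\mu(-\log(\varepsilon))$. Indeed, the flow equation becomes $\frac{\mathrm{d}\xi_\varepsilon}{\mathrm{d}t} = -\nabla\Phi_{\bsj_\varepsilon(t)}(\xi_\varepsilon(t))$, and the switching rate matrix $B_\varepsilon(\tau_\varepsilon) = B(-\log(\tau_\varepsilon))$ evaluated at $\tau_\varepsilon=\varepsilon$ equals $B(-\log(\varepsilon))$, which has the same off-diagonal entry $\mu(-\log(\varepsilon)) =: \lambda$ and diagonal entry $-N\lambda+\lambda$ as the matrix $A$ in \eqref{eq_transition_Mat}. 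Therefore the generators of the two Markov processes agree on the invariant set $\{\tau=\varepsilon\}$.

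From this identification, the invariance equation $\pi_\varepsilon \mathcal{A}_\varepsilon = 0$ restricted to functions that do not depend on $\tau$ reduces to the invariance equation of SGPC, so $\tilde{\pi}_\varepsilon$ is invariant for SGPC. By uniqueness of the stationary measure (Theorem~\ref{Thm_Hairer}), $\tilde{\pi}_\varepsilon = \pi_{C}$. Equivalently, for any $A\in\mathcal{B}X$ and $J\subseteq I$,
\begin{equation*}
\pi_\varepsilon(A\times\{\varepsilon\}\times J) = \tilde{\pi}_\varepsilon(A\times J) = \pi_C(A\times J),
\end{equation*}
which is the claim.

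The main obstacle is establishing cleanly that $\pi_\varepsilon$ concentrates on $\{\tau=\varepsilon\}$; this is intuitive but needs a short formal argument. A clean way is to apply the generator to the test function $f(\xi,\tau,j) := (\tau-\varepsilon)^2$, obtaining $\mathcal{A}_\varepsilon f = -2(\tau-\varepsilon)^2$, so stationarity forces $\int (\tau-\varepsilon)^2\,\mathrm{d}\pi_\varepsilon = 0$, i.e.\ $\tau=\varepsilon$ $\pi_\varepsilon$-a.s. Once this is in hand, everything else reduces to matching generators.
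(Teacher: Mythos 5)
Your proposal is correct and follows essentially the same route as the paper: identify the auxiliary dynamics on $\{\tau=\varepsilon\}$ with SGPC for $\lambda=\mu(-\log(\varepsilon))$ and conclude via uniqueness of the stationary measures from Theorems~\ref{Thm_Hairer} and~\ref{Thm_Hairer2}. The paper simply asserts that the stationary measure is unchanged when setting $\tau_\varepsilon(0):=\varepsilon$, whereas your generator computation with $f=(\tau-\varepsilon)^2$ supplies a clean formal justification that $\pi_\varepsilon$ concentrates on $\{\tau=\varepsilon\}$ (modulo the minor point that $f$ should be truncated to lie in the generator's domain, which is harmless since $\tau_\varepsilon$ stays in $[\varepsilon,1]$).
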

\begin{proof}
Note that the stationary measure of the process $(\xi_{\varepsilon}(t), \tau_{\varepsilon}(t), \bsj_{\varepsilon}(t))_{t \geq 0}$ does not change, when setting $\tau_{\varepsilon}(0) := \varepsilon$. Then however, $(\xi_{\varepsilon}(t),  \bsj_{\varepsilon}(t))_{t \geq 0}$ and $(\theta(t), \bsi(t))_{t \geq 0}$ are identically generated. Hence, they have the same stationary distribution. Also, Theorem~\ref{Thm_Hairer} and Theorem~\ref{Thm_Hairer2} imply that those stationary distributions are unique.
\end{proof}

\begin{lemma}  \label{Lemma_gradient_flow_stationar} Let Assumptions~\ref{Ass_continuity} and \ref{Ass_Contr}(ii) hold. Then, $\hPhi$ is strongly convex and for the flow $\bar{\varphi}$ corresponding to $\nabla \hPhi$, we have 
\begin{equation*}
\|\bar{\varphi}(\theta_0, t) -\bar{\varphi}(\theta_0', t) \| \leq \exp(-\kappa_1 t) \|\theta_0 - \theta_0' \|,
\end{equation*} 
where $\theta_0, \theta_0' \in X, t \geq 0$.
Hence, $\delta( \cdot - \theta^*)$ is the unique stationary measure of the \revised{full} gradient flow defined in \eqref{eq:zeta_ode}.
\end{lemma}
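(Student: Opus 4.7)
The plan is to handle the three claims in sequence: strong convexity of $\hPhi$, the contraction estimate for its flow, and uniqueness of $\delta(\cdot - \theta^*)$ as the stationary measure. Each step is short and the main work has essentially been done in Assumption~\ref{Ass_Contr}(ii) and Lemma~\ref{Lemma_convex}; I only need to average and transfer.

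First, I would prove strong convexity of $\hPhi$ by averaging the defining inequality \eqref{eq_convexity}. Since $\hPhi = \frac{1}{N}\sum_{i=1}^N \Phi_i$ and linearity of the gradient gives $\nabla \hPhi = \frac{1}{N}\sum_i \nabla \Phi_i$, for any $\theta_0, \theta_0' \in X$ I would compute
\begin{align*}
\langle \theta_0 - \theta_0', \nabla \hPhi(\theta_0) - \nabla \hPhi(\theta_0')\rangle
&= \frac{1}{N}\sum_{i=1}^N \langle \theta_0 - \theta_0', \nabla \Phi_i(\theta_0) - \nabla \Phi_i(\theta_0')\rangle \\
&\geq \frac{1}{N}\sum_{i=1}^N \kappa_i \|\theta_0 - \theta_0'\|^2 = \kappa_1 \|\theta_0 - \theta_0'\|^2,
\end{align*}
using $\kappa_1 = \cdots = \kappa_N$ from Assumption~\ref{Ass_Contr}(ii). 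This is precisely the strong convexity inequality \eqref{eq_convexity} for $\hPhi$ with constant $\kappa_1 > 0$.

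Next, the contraction estimate for $\bar{\varphi}$ follows by applying Lemma~\ref{Lemma_convex} to $\hPhi$: since $\nabla \hPhi$ is continuously differentiable by Assumption~\ref{Ass_continuity} and $\hPhi$ satisfies \eqref{eq_convexity} with constant $\kappa_1$, the associated flow contracts at rate $\exp(-\kappa_1 t)$. This is a one-line invocation once step one is in place.

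Finally, for the statement about the stationary measure, I would argue that $\theta^*$ is a fixed point of the flow and that the contraction forces uniqueness. Since $\hPhi$ is strongly convex, it has a unique global minimum $\theta^*$ characterised by $\nabla \hPhi(\theta^*) = 0$; hence $\bar{\varphi}(\theta^*, t) = \theta^*$ for all $t \geq 0$, so $\delta(\cdot - \theta^*)$ is invariant under the (deterministic) Markov kernel $\delta(\cdot - \bar{\varphi}(\theta_0, t))$. Uniqueness follows from the contraction bound: if $\pi$ were another stationary probability measure, then for any $\theta_0 \in X$ and $\pi'$-a.e.\ $\theta_0'$,
\begin{align*}
\|\bar{\varphi}(\theta_0, t) - \bar{\varphi}(\theta_0', t)\| \leq \exp(-\kappa_1 t)\|\theta_0 - \theta_0'\| \longrightarrow 0,
\end{align*}
as $t \to \infty$, so any stationary measure must assign full mass to $\{\theta^*\}$; equivalently, pairing with $\delta(\cdot-\theta^*)$ in the Wasserstein distance and pushing forward by the flow gives $d_{\mathrm{W}}(\pi,\delta(\cdot-\theta^*)) \leq \exp(-\kappa_1 t) d_{\mathrm{W}}(\pi,\delta(\cdot-\theta^*))$, which forces equality.

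I do not anticipate a genuine obstacle here; the only subtlety is articulating uniqueness cleanly, since the prior Lemma~\ref{Lemma_convex} only supplies contraction and not outright uniqueness of an invariant measure for a deterministic flow. The cleanest route is the Wasserstein-contraction argument above, avoiding any appeal to ergodic theorems for deterministic dynamics.
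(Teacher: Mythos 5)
Your proof is correct and follows essentially the same route as the paper's (very terse) proof: average \eqref{eq_convexity} over $i\in I$ to obtain strong convexity of $\hPhi$ with constant $\kappa_1$, invoke Lemma~\ref{Lemma_convex} for the contraction of $\bar{\varphi}$, and use that contraction together with $\nabla\hPhi(\theta^*)=0$ to pin down the invariant measure. The only divergence is in the final step, where the paper appeals to the Banach Fixed-Point Theorem while you push the contraction through the Wasserstein distance directly; your version is arguably the more complete one, since Banach's theorem gives uniqueness of the fixed point of $\bar{\varphi}(\cdot,t)$ but one still needs the small additional argument you supply to conclude uniqueness of the invariant \emph{measure}.
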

\begin{proof}
The first part follows from Lemma~\ref{Lemma_convex}. The second part is implied by the Banach Fixed-Point Theorem and by the stationarity of $\theta^*$ with respect to $\nabla \hPhi$.
\end{proof}

Now, we proceed to prove the second part of the main proposition. 
\begin{proof}[{Proof of Proposition~\ref{Propo_conv_auxiliary_pro}(ii)}] By Lemma~\ref{Lemma_identical_stationar} and Lemma~\ref{Lemma_gradient_flow_stationar}, it will be sufficient to show that in the setting of Theorem~\ref{thm:ODE_limit}, the stationary measure of SGPC with $\lambda := \mu(-\log(\varepsilon))$ converges to the stationary measure of the gradient flow $(\zeta(t))_{t \geq 0}$.
We proceed as in Chapters 6.4 and 6.5 of \cite{Kushner1984}, i.e. we need to show
\begin{itemize}
\item[(i)] $(\zeta(t))_{t \geq 0}$ has a unique stationary measure $\bar{\pi}$ and $\zeta(t) \Rightarrow \bar{\pi}$, as $t \rightarrow \infty$,
\item[(ii)] $\theta^*$ is Lyapunov stable for $(\zeta(t))_{t \geq 0}$,
\item[(iii)] Let $t_\varepsilon \rightarrow t_0 \in \mathbb{R}$, as $\varepsilon \downarrow 0$. Then, $\mathbb{P}(\theta(t_{\varepsilon}) \in \cdot) \Rightarrow \mathbb{P}(\zeta(0) \in \cdot)$, as $\varepsilon \downarrow 0$, implies that $(\theta(t_{\varepsilon} + t))_{t \geq 0} \Rightarrow (\zeta(t))_{t \geq 0}$, as $\varepsilon \downarrow 0,$
\item[(iv)] There is an $\varepsilon' > 0$, such that $(\theta(t))_{t \geq 0, \varepsilon' \geq \varepsilon > 0}$ is tight with respect to both $t$ and $\varepsilon$.
 \end{itemize}
Those assumptions will imply that $\theta(t) \Rightarrow \bar{\pi}$, as $\varepsilon \downarrow 0$ and $t \rightarrow \infty$; see Theorem 6.5 in \cite{Kushner1984}. As $(\theta(t))_{t \geq 0}$ has a unique stationary measure, we have that $\pi_{C} \Rightarrow \bar{\pi}$.
Now to prove these four assertions. (i), (ii) follow immediately from Lemma~\ref{Lemma_gradient_flow_stationar}, with $\bar{\pi} := \delta(\cdot - \theta^*)$. (iii) is implied by Theorem~\ref{Thm_kushner}. Due to the strong convexity that we have assumed in Assumption~\ref{Ass_Contr}(ii), we know that the process cannot \revised{escape} a certain compact set; see Lemma 1.14 in \cite{benaim2012:quant} for details. This implies tightness as needed in (iv).

Finally, note that $\pi_C \Rightarrow \bar{\pi}$ already implies that they also converge in $d_{W}$. Hence, we can construct a function $\alpha''$ accordingly.
\end{proof}

\subsection{Linear least squares problems} \label{Subs_LLSQ}
In this section, we illustrate the theoretical results of \S\S\ref{Subsec_const}--\ref{Appendix_Prop_approx} with an abstract example. 
In particular, we  show that Assumptions~\ref{Ass_continuity} and \ref{Ass_Contr} hold for linear least squares problems under weak assumptions.
Those appear in (regularised) linear or polynomial regression.

Let $Y:= \mathbb{R}^M$, $y \in Y$, and $G: X \rightarrow Y$ be a linear operator. $Y$ is the \emph{data space}, $y$ is the \emph{observed data set}, and $G$ is the \emph{parameter-to-data map}.
We consider the problem of estimating
\begin{equation} \label{eq_optprob_linearquad}
 \theta^* \in   \mathrm{argmin}_{\theta \in X} \hPhi(\theta) := \frac{1}{2}\|G \theta - y\|^2,
\end{equation}
which is called \emph{linear least squares problem}. 

We aim to solve this problem by the stochastic gradient descent algorithm. Indeed, we define
$$\Phi_i(\theta_0) := \frac12\|G_i \theta_0 - y_i\|^2 \qquad (\theta_0 \in X, i \in I),$$
where $y_i$ is an element of another Euclidean vector space $Y_i := \mathbb{R}^{M_i}$ and $G_i: X \rightarrow Y_i$ is a linear operator, for $i \in I$. We assume that these are given such that the space $Y = \prod_{i \in I}Y_i$, the vector $(y_i)_{i \in I} = N\cdot y$, and the operator $[G_1^T, \ldots, G_N^T]^T = N \cdot G.$
To define the SGP, we now need to derive the gradient field. This is given by the associated normal equations:
$$
\nabla \Phi_i(\theta_0) = G_i^TG_i \theta_0 - G_i^Ty_i \qquad (\theta_0 \in X, i \in I).
$$
These vector fields are linear, thus, satisfy Assumption~\ref{Ass_continuity}. Now we discuss Assumption~\ref{Ass_Contr}.
Let $i \in I$. Note that $G_i^TG_i$ is symmetric, positive semi-definite. We have
\begin{align*}
\langle \theta_0 - \theta_0', \Phi_i(\theta_0) -\Phi_i(\theta_0') \rangle = \langle \theta_0 - \theta_0', G_i^TG_i (\theta_0 -\theta_0') \rangle  \geq \kappa_i \|\theta_0 - \theta_0'\|^2,
\end{align*}
where $\kappa_i \geq 0$ is the smallest eigenvalue of  $G_i^TG_i$.
This implies that Assumption~\ref{Ass_Contr}(i) holds, if there is some $i \in I$ with $G_i^TG_i$ strictly positive definite. Furthermore, Assumption~\ref{Ass_Contr}(ii) holds, if for all $i \in I$ the matrix $G_i^TG_i$ is strictly positive definite.

Strict positive definiteness of $G_i^TG_i$ is satisfied, if $\dim Y_i \geq \dim X$ and $G_i$ has full rank, for $i \in I$. 
The inequality $\dim Y_i \geq \dim X$ is not restrictive, as we apply SGD typically in settings with very large data sets. If the $G_i$ do not have full rank, one could add a Tikhonov regulariser to the target function in \eqref{eq_optprob_linearquad}.

\section{From continuous to discrete} \label{Sec_Discret}
In the previous sections, we have introduced and discussed SGP mainly as an analytical tool and abstract framework to study SGD. 
However, we can also apply SGP more immediately in practice.
To this end, we need to consider the following computational tasks:
\begin{itemize}
\item[(i)] discretisation of deterministic flows $(\varphi_i)_{i \in I}$
\item[(ii)] discretisation of continuous-time Markov processes $(\bsi(t))_{t \geq 0}$, resp. $(\bsj(t))_{t \geq 0}$
\end{itemize}
The discretisation of the $(\varphi_i)_{i \in I}$ consists in the discretisation of several homogeneous ODEs. The discretisation of ODEs has been studied extensively; see, e.g., \cite{iserles2008}.  Thus, we focus on (ii) and discuss a sampling strategy for the CTMPs in \S\ref{Subsec_Applic_SGP}.

A different aspect is the following: note that when specifying strategies for (i) and (ii), we implicitly construct a stochastic optimisation algorithm. Since we have introduced SGP as a continuous-time variant of SGD, one of these algorithms should be the original SGD algorithm.  Indeed, in \S\ref{Subsec_retrieve_SGD}  we will explain a rather crude discretisation scheme which allows us to retrieve SGD. Well-known algorithms beyond SGD that can be retrieved from SGP are discussed in \S \ref{Subsec_beyond_SGD}.
\subsection{Applying SGP} \label{Subsec_Applic_SGP}
We now briefly explain a  strategy that allows us to sample the CTMPs $(\bsi(t))_{t \geq 0}$ and  $(\bsj(t))_{t \geq 0}$. Without loss of generality, we focus on the second case, $(\bsj(t))_{t \geq 0}$.

Indeed, we give a sampling strategy in Algorithm~\ref{alg_bsj}. It commences by sampling an initial value $\bsj(0)$. 
This value remains constant for the duration of the random waiting time.
After this waiting time is over, we sample the next value of the process from a uniform distribution  on all states, but the current state. 
This value is kept constant for another random waiting time and so on.
This strategy goes back to Gillespie \cite{Gillespie1977}; see also \cite{Rao2012} for this and other sampling strategies for CTMPs on discrete spaces.
\begin{algorithm}[htbp]
\caption{Sampling $(\bsj(t))_{t \geq 0}$}\label{alg_bsj} 
\begin{algorithmic}[1] \State sample $\bsj(0) \sim \mathrm{Unif}(I)$
\State $T_0 \gets 0$
\For{$k = 1, 2, \ldots$}
      \State sample $D \sim \pi_{\rm wt}(\cdot | T_{k-1})$ \label{line_sampling_wt}
      \State $T_k \gets T_{k-1}+ D$
      \State $\bsj|_{[T_{k-1}, T_k)} \gets \bsj(T_{k-1}) $
      \State  $\bsj(T_{k}) \sim \mathrm{Unif}(I \backslash \{\bsj(T_{k-1})\})$
   \EndFor
   \State \textbf{return} $(\bsj(t))_{t \geq 0}$
\end{algorithmic}
\end{algorithm}

The potentially most challenging step in Algorithm~\ref{alg_bsj} is the sampling from the  distribution $\pi_{\rm wt}(\cdot | t_0)$ in line~\ref{line_sampling_wt}. 
In the case of SGPC, i.e. if $\eta$ is constant, this sampling just comes down to sampling from an exponential distribution. 
In SGPD, the sampling could be performed using the quantile function of $\pi_{\rm wt}(\cdot | t_0)$, if accessible. 
We sketch the  method below. If the quantile function is not accessible, strategies such as rejection sampling may be applicable; see \cite{Robert2004} for details. 
In the following, we consider \revised{first} the case where $1/\eta(\cdot)$ is an affine function \revised{and then the case where $\eta$ scales exponentially in time. Both of these cases satisfy the growth condition in \eqref{Eq_bounded_deriv}. Thus, our theory applies to the SGPD employing either of these learning rate functions.}
\begin{example}\label{Exam_linear_fail}
Let $\eta(t) := (at +b)^{-1}$, for $t\geq 0$ and some $a, b > 0$. Then, we have for $t_0 \geq 0$ and $t \geq t_0$:
\begin{align*}
\pi_{\rm wt}((-\infty, t]| t_0) =1- \exp\left(-\int_{0}^t au+at_0+b \mathrm{d}u \right) = 1-\exp\left(-\frac{1}{2}at^2 - at_0t - bt \right).
\end{align*}
By inverting this formula, we obtain the quantile function
\begin{equation} \label{eq_quantile_func}
Q(s|t_0) = \frac{-at_0 - b + \sqrt{(at_0+b)^2 - 2a\log(1-s)}}{a},
\end{equation}
where $s \in (0,1), t_0 \geq 0$.
Using this quantile function, we are able to sample from $\pi_{\rm wt}( \cdot | t_0)$. Note that for $\revised{U} \sim \mathrm{Unif}((0,1))$ we have $\mathbb{P}(Q(\revised{U}|t_0) \in \cdot) = \pi_{\rm wt}(\cdot| t_0)$.
We have used this technique to estimate mean and standard deviations of $\pi_{\rm wt}( \cdot | t_0)$ for $t_0 \in [0, \revised{10}]$ and $a = b = 1$; see Figure~\ref{fig_failure_dist}. We observe that the mean behaves as $\eta(\cdot)$, showing a similarity with the exponential distribution.
\end{example}
\begin{figure}[htb]
\centering
\includegraphics[scale=0.5]{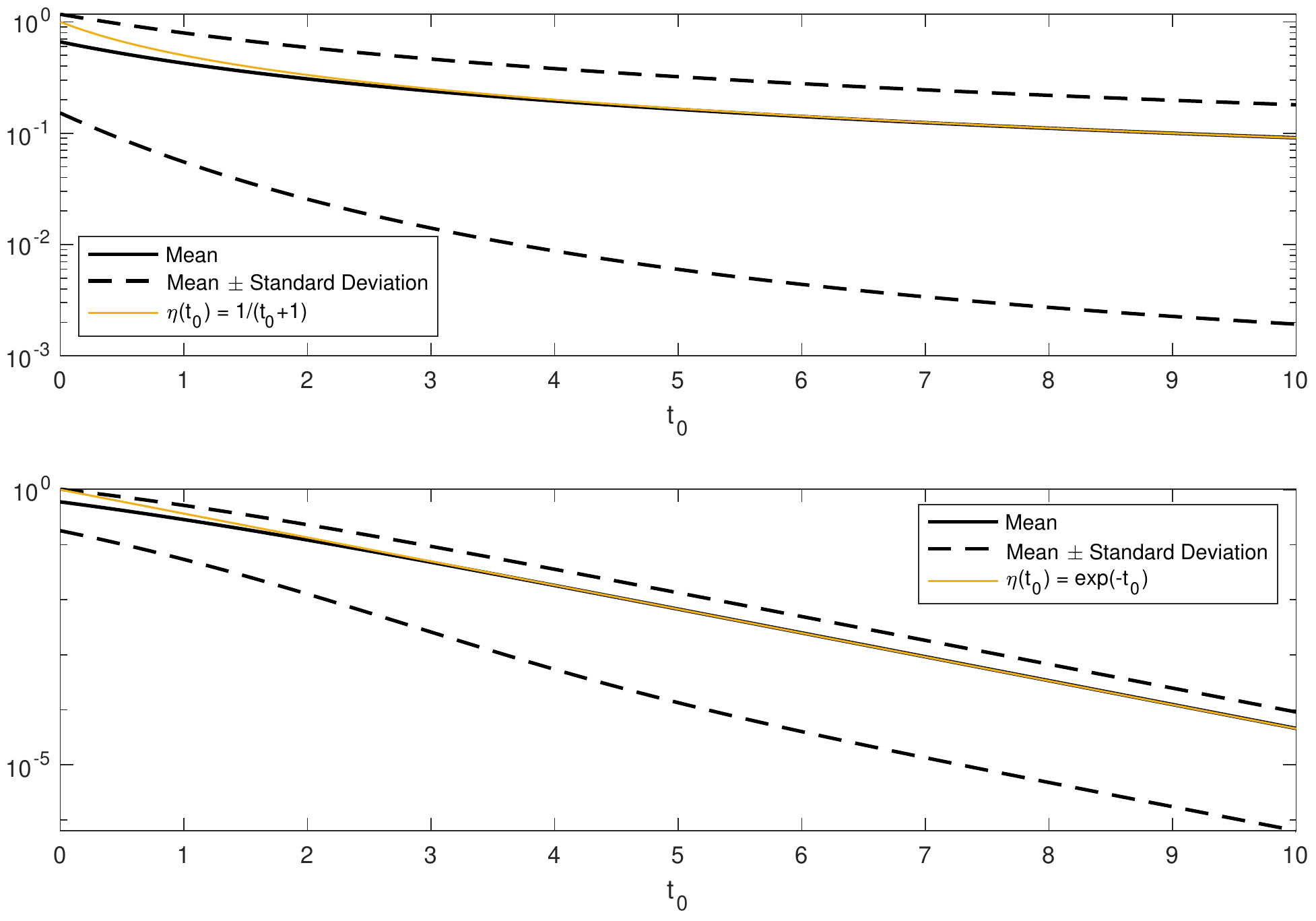}
\caption{Mean and standard deviations for the time-dependent probability measure\revised{s} $\pi_{\rm wt}(\cdot| t_0)$ from Example\revised{s}~\ref{Exam_linear_fail} \revised{(top row) and \ref{Exam_exponential_fail} (bottom row)} with $a = b = 1$ and $t_0 \in [0, \revised{10}]$. Mean and standard deviations are estimated with standard Monte Carlo using $10^4$ samples.}
\label{fig_failure_dist}
\end{figure}
\revised{
\begin{example}\label{Exam_exponential_fail}
Let $\eta(t) := a\exp(-bt)$, for $t\geq 0$ and some $a, b > 0$. Then, we have for $t_0 \geq 0$ and $t \geq t_0$:
\begin{align*}
\pi_{\rm wt}((-\infty, t]| t_0) =1- \exp\left(-\int_{0}^t \frac{\exp(b(u+t_0))}{a} \mathrm{d}u \right) = 1-\exp\left(\frac{1-\exp(bt)}{ab\exp(-bt_0)} \right).
\end{align*}
We can again compute the quantile function
\begin{equation} \label{eq_quantile_func_exp}
Q(s|t_0) = \frac{1}{b} \log\left(1- ab\exp(-bt_0)\log(1-s)\right)
\end{equation}
where $s \in (0,1), t_0 \geq 0$. We again use the quantile function to estimate mean and standard deviations of the distribution for  $a = b = 1$ and $t_0 \in [0,10]$; see Figure~\ref{fig_failure_dist}. 
\end{example}}

\subsection{Retrieving SGD from SGP} \label{Subsec_retrieve_SGD} Now, we discuss how the SGP dynamic needs to be discretised to retrieve the SGD algorithm. To this end, we list some features that we need to keep in mind:
The waiting times between switches of the data sets are deterministic in SGD and random in SGP. The processes  $(\bsi(t))_{t \geq 0}$ and  $(\bsj(t))_{t \geq 0}$ in SGP indeed jump with probability one after the waiting time is over, i.e. $\bsi(t) \neq \bsi(s)$ when one jump occurred in $(t,s]$. In SGD, however, it is possible to have a data set picked from the sample twice in a row.
Finally, we need to discretise the flows $(\varphi_i)_{i \in I}$ using the explicit Euler method.

We approximate the process $(\bsj(t))_{t \geq 0}$ by
\begin{equation} \label{eq_jhat_sgd}
\hat{\bsj}(t) := \sum_{k=0}^{\infty} \bsj_k \mathbf{1}\left[\hat{t}_{k} \leq t < \hat{t}_{k+1}\right],
\end{equation}
where $\bsj_0,\bsj_1,\ldots \sim \mathrm{Unif}(I)$ i.i.d. and the sequence $(\hat{t}_k)_{k=0}^\infty$ is given by
\begin{equation} \label{eq_wt_cont2disc}
\hat{t}_0 := 0, \quad \hat{\eta}_{k+1} := \eta\left(\hat{t}_k\right), \quad  \hat{t}_k := \sum_{\ell = 1}^k\hat{\eta}_\ell \quad (k \in \mathbb{N}).
\end{equation}
Note that with this definition of the sequence $(\hat{\eta}_k)_{k=1}^\infty$ , we obtain  $\hat{\eta}_k = \eta_k$, $k \in \mathbb{N}$, which was the discrete learning rate defined in Algorithm~\ref{alg_SGD}.
See our discussion in \S \ref{Subsec_Choice_of_model} for the choice of $(\hat{\bsj}(t) )_{t \geq 0}$ as an approximation of $({\bsj}(t) )_{t \geq 0}$.
If we employ $(\hat{\bsj}(t))_{t \geq 0}$ and an explicit Euler discretisation with step length $\eta_k$ in step $k \in \mathbb{N}$ to discretise the respective flows $(\varphi_i)_{i \in I}$, we obtain precisely the process defined in Algorithm~\ref{alg_SGD}.

\subsection{Beyond SGD} \label{Subsec_beyond_SGD}
In \S\ref{Subsec_retrieve_SGD}, we have discussed how to discretise the SGP $(\xi(t))_{t \geq 0}$ to obtain the standard SGD algorithm. It is also possible to retrieve other stochastic optimisation algorithms by employing other discretisation strategies for the flows $(\varphi_i)_{i \in I}$. 
Note, e.g., that when replacing the explicit Euler discretisation of the flows $(\varphi_i)_{i \in I}$ in \S\ref{Subsec_retrieve_SGD}  by an implicit Euler discretisation, we obtain the \emph{stochastic proximal point algorithm}; see, e.g., Proposition 1 of \cite{Bertsekas2011} for details. 

Using higher-order methods instead of explicit/im\-plicit Euler, we obtain higher-order stochastic optimisation methods. Those have been discussed by Song et al. \cite{Song2018}. Adaptive Learning Rates for SGD are conceptually similar to adaptive stepsize algorithms in ODE solvers, but follow different ideas in practice; see \cite{Duchi2011,Li2019}.

Linear-complexity SGD-type methods, like Stochastic Average Gradient (SAG) \cite{Schmidt2017}, Stochastic Variance Reduced Gradient (SVRG) \cite{Johnson2013}, or SAGA \cite{Defazio2014}  remind us of multistep integrators for ODEs. Here, the update does not only depend on the current state of the system, but also on past states. On the other hand, variance reduction in the discretisation of stochastic dynamical systems is, e.g.,  the object of Multilevel Monte Carlo path sampling, as proposed by Giles \cite{Giles2008}.


\section{Numerical \revised{experiments}} \label{Sec_NumIll}
We now aim to get an intuition behind the stationary measures $\pi_C$, $\pi_\varepsilon$ (Theorems~\ref{Thm_Hairer} and \ref{Thm_Hairer2}), \revised{study} the convergence of the Markov processes, and \revised{compare } SGP with SGD.
\begin{figure*}[htb]
\centering
\includegraphics[scale=0.9]{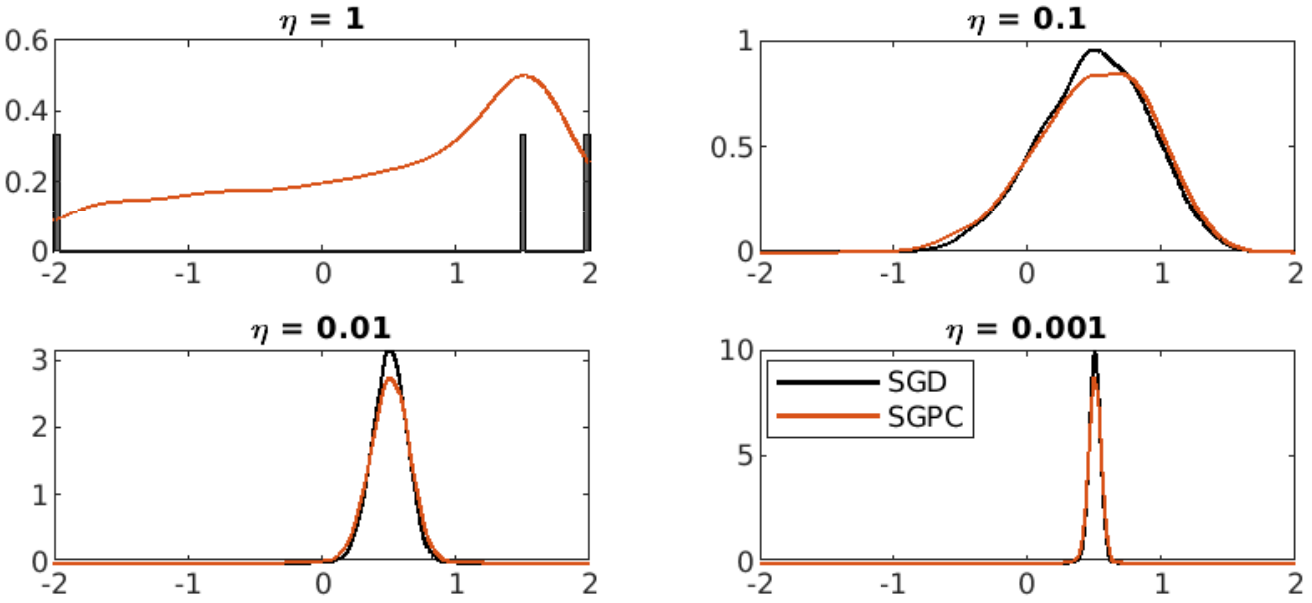}
\caption{Estimated stationary measures of SGD and SGPC with different $\eta \in  \{1,  10^{-1}, 10^{-2}, 10^{-3}\}$ and initial value $\theta_0 = -1.5$. The results are based on kernel density estimations with $10^4$ samples each of $\theta(10)$ for SGPC and $\theta_k$ with $k = 10/\eta$ for SGD. \revised{Note that for SGD with $\eta = 1$, the samples are concentrated in 3 points, which is why we plot a histogram rather than a density.}}
\label{Fig_toy_example}
\end{figure*}

Below, we define the academic example that we study throughout this section. It fits into the linear least squares framework discussed in \S\ref{Subs_LLSQ}. Moreover, it  satisfies Assumptions~\ref{Ass_continuity} and \ref{Ass_Contr}\revised{(i) and }(ii); see \S \ref{Subs_LLSQ}. Then, we proceed by applying SGD, SGPC, and SGPD.
\begin{example} \label{Exam_toy}
Let $N := 3$, i.e. $I:=\{1, 2, 3\}$, and  $X := \mathbb{R}$. We define the potentials 
\begin{align*}
\Phi_1(\theta) :=\frac{1}{2}(\theta+2)^2, \quad \Phi_2(\theta) :=\frac{1}{2}(\theta-1.5)^2, \quad  \Phi_3(\theta) :=\frac{1}{2}(\theta-2)^2 \quad (\theta \in X).
\end{align*}
 The minimiser of $\bar{\Phi} \equiv \Phi_1/3 + \Phi_2/3 + \Phi_3/3$ is $\theta^* = 0.5$. 
\end{example}
\subsection{Constant learning rate} \label{Numerics_constant_eta}
Approaching the optimisation problem in Example~\ref{Exam_toy}, we now employ SGPC with initial value $\theta_0 = -1.5$ and $\eta \in  \{1, 10^{-1}, 10^{-2}, 10^{-3}\}$. We sample from this process using Algorithm~\ref{alg_bsj} for the CTMP $(\bsi(t))_{t \geq 0}$  and \revised{the analytical solution of the gradient flows} $(\varphi_i)_{i \in I}$.  
Throughout this section, we use the \textsc{Matlab} function \texttt{ksdensity} to compute kernel density estimates. All of those are based on Gaussian kernel functions with boundary correction at $\{-2, 2\}$, if necessary.

We now sample SGPC as discussed above and collect the samples $\theta(10)$, i.e. the value of the process at time $t = 10$.  
In Figure~\ref{Fig_toy_example}, we show kernel density estimates based on $10^4$ of these samples. 
For large $\eta$, the density has mass all over the invariant set of the $(\varphi_i)_{i \in I}$. 
If $\eta$ is reduced, we see that the densities become more and more concentrated around the optimum $\theta^*$.

Next, we compare SGPC with SGD. Indeed, we compute kernel density estimates of $10^4$ samples of the associated SGD outputs. In particular, we run SGD with the same learning rates up to iterate $10/\eta$.  For $\eta = 1$, the numerical artifacts seem to dominate SGD. 
 For smaller $\eta$, the densities obtained from both algorithms behave very similarly: we only see a slightly larger variance in SGP. Indeed, when looking at the values of the variances of $\theta(10)$ for $\eta \in  \{10^{-1}, 10^{-2}, 10^{-3}\}$, they seem to depend linearly on $\eta$ and only differ among each other by about factor 1.3, see the estimates in Table~\ref{table_SGD_var}.
\begin{table}[htb]
\centering
\begin{tabular}{l|llll} \hline
$\eta$ & 1  & $10^{-1}$  & $10^{-2}$ & $10^{-3}$  \\ \hline
SGPC & 1.2741 & 0.1961 & 0.0209  & 0.0021   \\
SGD     & 3.1754 &   0.1695  &  0.0157  &  0.0016  \\ \hline
\end{tabular}
\caption{Sample variances of $10^4$ samples of $\theta(10)$ in SGPC and $\theta_{10/\eta}$ in SGD.}
\label{table_SGD_var}
\end{table}

We next take a look at the sample paths of said SGPC runs; consider Figure~\ref{Fig_samplepaths}. As anticipated and actually already shown in Figure~\ref{Fig_odelim}, the smaller $\eta$ leads to a faster switching and to a sample path that well approximates the full gradient flow. Large $\eta$ leads to slow switching. 
It is difficult to recognise the actual speed of convergence shown in Theorem~\ref{Thm_Hairer}.
However, we see that each of the chains indeed reaches a stationary regime. The time at which those regimes are reached highly depends on $\eta$. Indeed, for $\eta = 1$ we seem to be almost right away in said regime. For the smallest learning rate $\eta = 10^{-3}$, it appears to take up to $t \approx 3.5$. 
What does this mean from a computational point of view? The approach with a small learning rate is computationally inefficient: the large number of switches makes the discretisation of the sample paths computationally expensive; the slow convergence to the stationary regime implies that we need to run the process for a relatively long time. 
For large $\eta$, however, we are not able to identify the optimal point; see Figure~\ref{Fig_toy_example}. Hence, with large and constant $\eta$ the method is ineffective.

\begin{figure}[htb]
\centering
\includegraphics[scale=0.65]{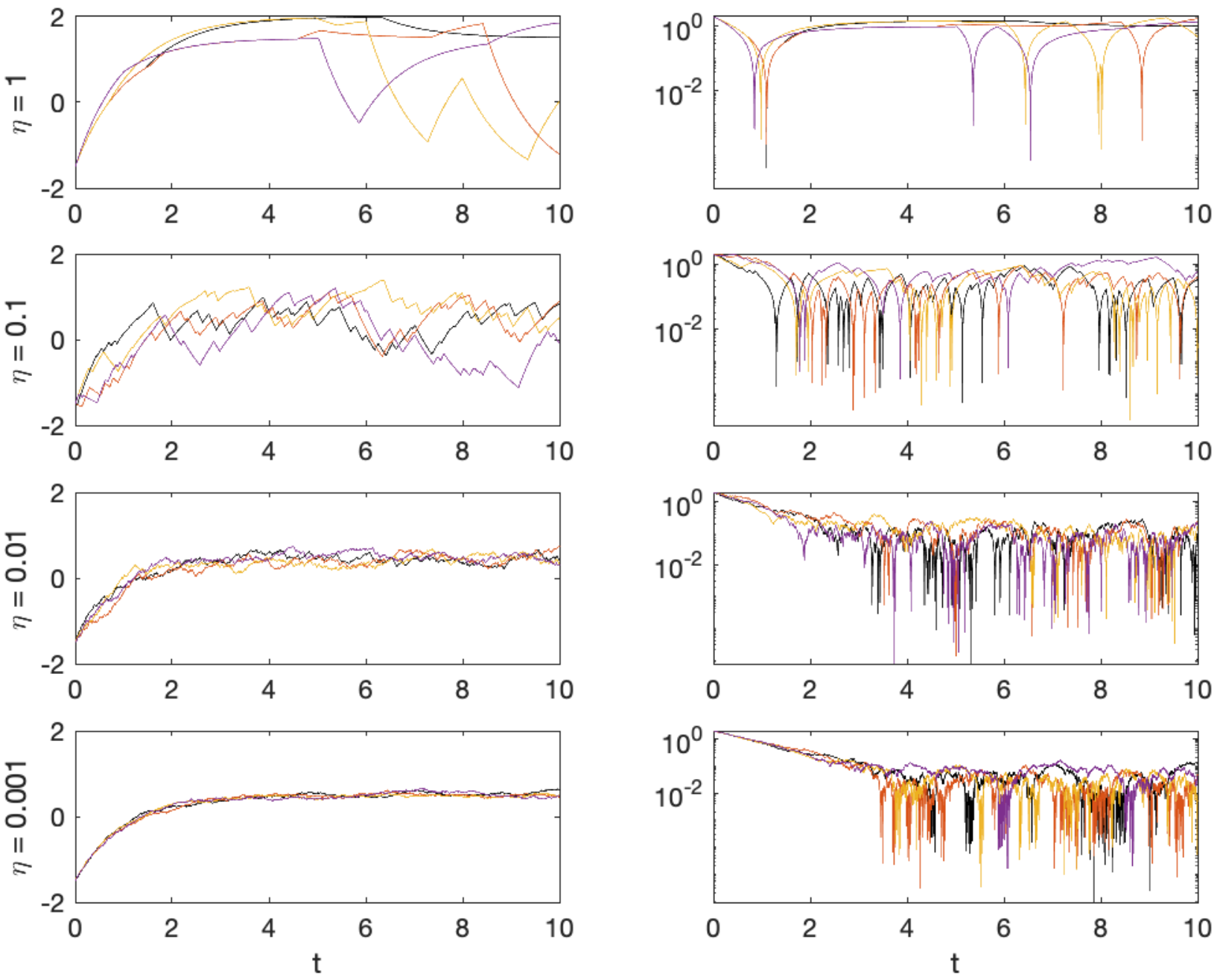}

\caption{Sample paths of SGPC  as in Figure~\ref{Fig_toy_example}. Left: four sample paths $(\theta(t))_{t \geq 0}$, right: associated distances between sample paths and optimal point, i.e. $(|\theta(t)-0.5|)_{t \geq 0}$.}

\label{Fig_samplepaths}
\end{figure}
\subsection{Decreasing learning rate} \label{Numerics_decreasing_eta}
In SGPD, we can solve the efficiency problem of SGPC noted in the end of \S\ref{Numerics_constant_eta}: we start with a large $\eta$, which is decreased over time. Hence, we should expect to see fast convergence in the beginning and accurate estimation of $\theta^*$ later on. 
To test this assertion we get back to the problem defined in Example~\ref{Exam_toy}.

\begin{figure}[htb]
\centering
\includegraphics[scale=0.5]{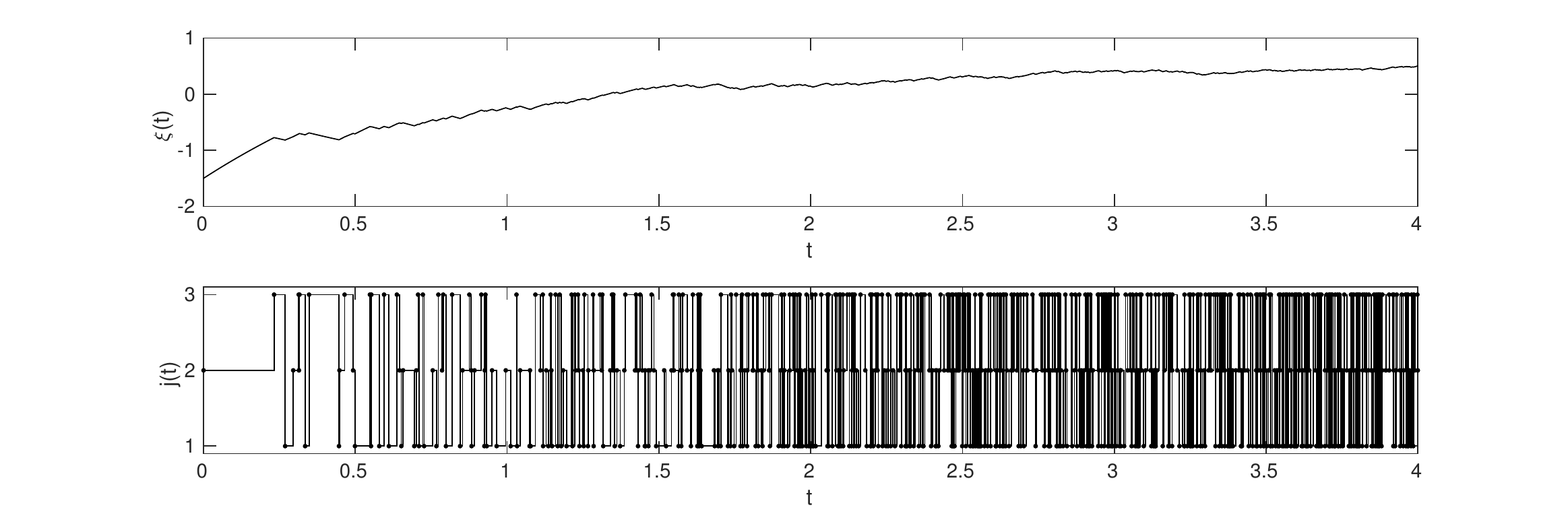} \\

\includegraphics[scale=0.5]{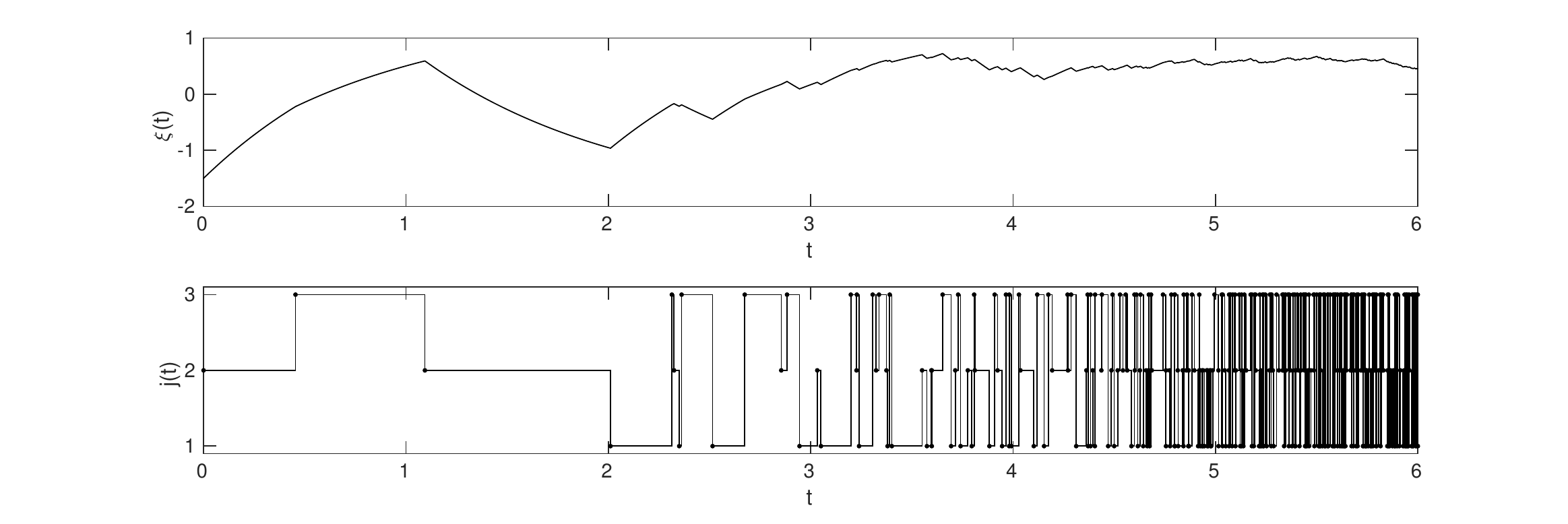}
\caption{A sample path of $(\xi(t), \bsj(t))_{t \geq 0}$, as specified in \S\ref{Numerics_decreasing_eta}. \revised{The top two figures refer to the rational learning rate, the bottom two figures refer to the exponential rate.}}
\label{Figure_paths_i_xi}
\end{figure}

\begin{figure*}[htp]
\centering
\includegraphics[scale=0.6]{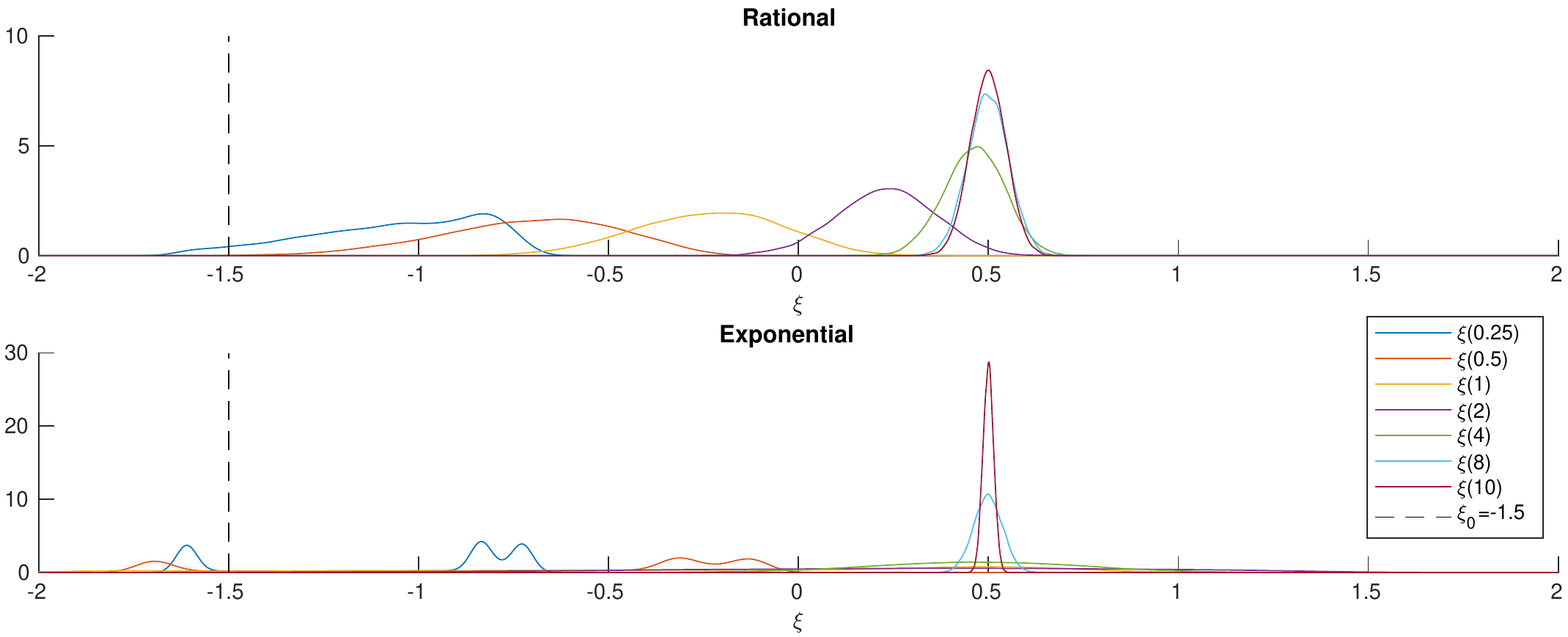}
\caption{Estimated densities of the distribution of the SGPD states using $10^4$ Monte Carlo samples. 
Densities at times $t \in \{1/4, 1/2, 1, 2, 4, \revised{8,} 10\}$ and initial value $\xi(0) = \xi_0$. 
}
\label{Fig_SGPD_KDE}
\end{figure*}

We \revised{study two different} time-dependent learning rate\revised{s: a rational rate that is the reciprocal of an affine function, as in } Example~\ref{Exam_linear_fail}\revised{, as well as an exponential learning rate; as in Example~\ref{Exam_exponential_fail}.} In particular, we choose
\begin{align*}
\eta(t) &:= \frac{1}{100t +1} \tag{rational} \\
\revised{\eta(t)} &:= \exp(-t)\tag{exponential}.
\end{align*}
and sample from the associated waiting time distribution using the quantile function\revised{s} \eqref{eq_quantile_func} \revised{ and \eqref{eq_quantile_func_exp}, respectively. Note that, as mentioned before, the reciprocal of both learning rate functions satisfies the growth condition in \eqref{Eq_bounded_deriv}.} 
All the other specifications are identical to the ones given in \S\ref{Numerics_constant_eta}: we set, e.g., $\xi_0 := -1.5$ as an initial value for the process. 
In Figure~\ref{Figure_paths_i_xi}, we show single sample path\revised{s}  of the process\revised{es}  $(\xi(t), \bsj(t))_{t \geq 0}$, \revised{with the different learning rate functions}. 
\revised{In both cases,} we can see that the waiting times between jumps in $(\bsj(t))_{t \geq 0}$ go down as $t$ increases: the (vertical) jumps become denser over time. For small $t > 0$, one can also recognise the coupling between $(\xi(t))_{t \geq0}$ and $(\bsj(t))_{t \geq 0}$. \revised{If we compare the paths with the different learning rate functions, we see that the exponential rate allows for much larger steps in the beginning and then decreases quite quickly. The rational rate leads to fast switching early on, which decreases further rather slowly over time. }
Note that \revised{these} plot\revised{s are} essentially realistic version\revised{s} of the cartoon in Figure~\ref{Fig_pdmp_cartoon}.

Next, we look at the distribution of $\xi(t)$ for particular $t > 0$. In Figure~\ref{Fig_SGPD_KDE}, we plot kernel density estimates for the distributions of $\xi(1/4),$ $\xi(1/2),$ $\xi(1),$ $\xi(2),$ $\xi(4),$ \revised{$\xi(8)$} and $\xi(10)$. Those estimates are each based on $10^4$ independent Monte Carlo samples. Hence, we show how the distribution of the process\revised{es} evolves over time. We observe that the process starting at $\xi(0) = -1.5$ moves away from that state and 
slowly approaches the optimal point $\theta^* = 0.5$. Doing so, it starts with a large variance that is slowly decreased over time. This is consistent with what we have observed in Figure~\ref{Fig_toy_example} and Table~\ref{table_SGD_var}.  \revised{In case of the exponential learning rate, this behaviour is much more pronounced: we start with a much higher variance but end up at $t=10$ with a smaller variance.}

\revised{In Figure~\ref{Figure_comparison_densities}, we additionally compare the distribution of the constant learning rate process with $\eta = 10^{-3}$ with the exponential and rational rate processes at the time at which their learning rate is approximately equal to $10^{-3}$. We see that the states of the constant and rational rate processes have almost the same distribution, which is what we would hope to see. The exponential learning rate process has a larger variance.}

\begin{figure}
\centering
\includegraphics[scale=0.6]{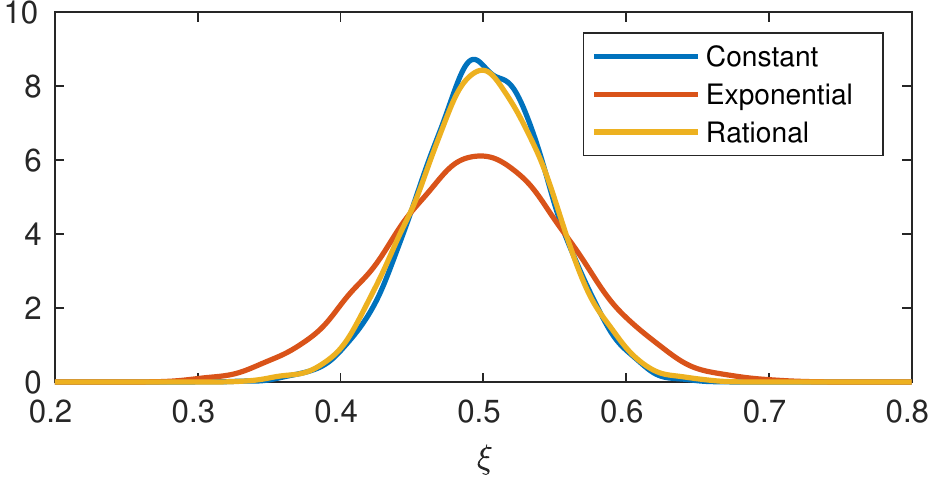}
\caption{\revised{Comparison of the densities of SGPC state $\theta(10)$ where $\eta = 10^{-3}$ taken from Figure~\ref{Fig_toy_example}, the rational learning rate SGPD $\xi(9.99)$, and the exponential learning rate SGPD $\xi(6.91)$. The densities are estimated with $10^4$ samples. }}
\label{Figure_comparison_densities}
\end{figure}

\begin{figure*}[htb]
\centering
\includegraphics[scale=0.7]{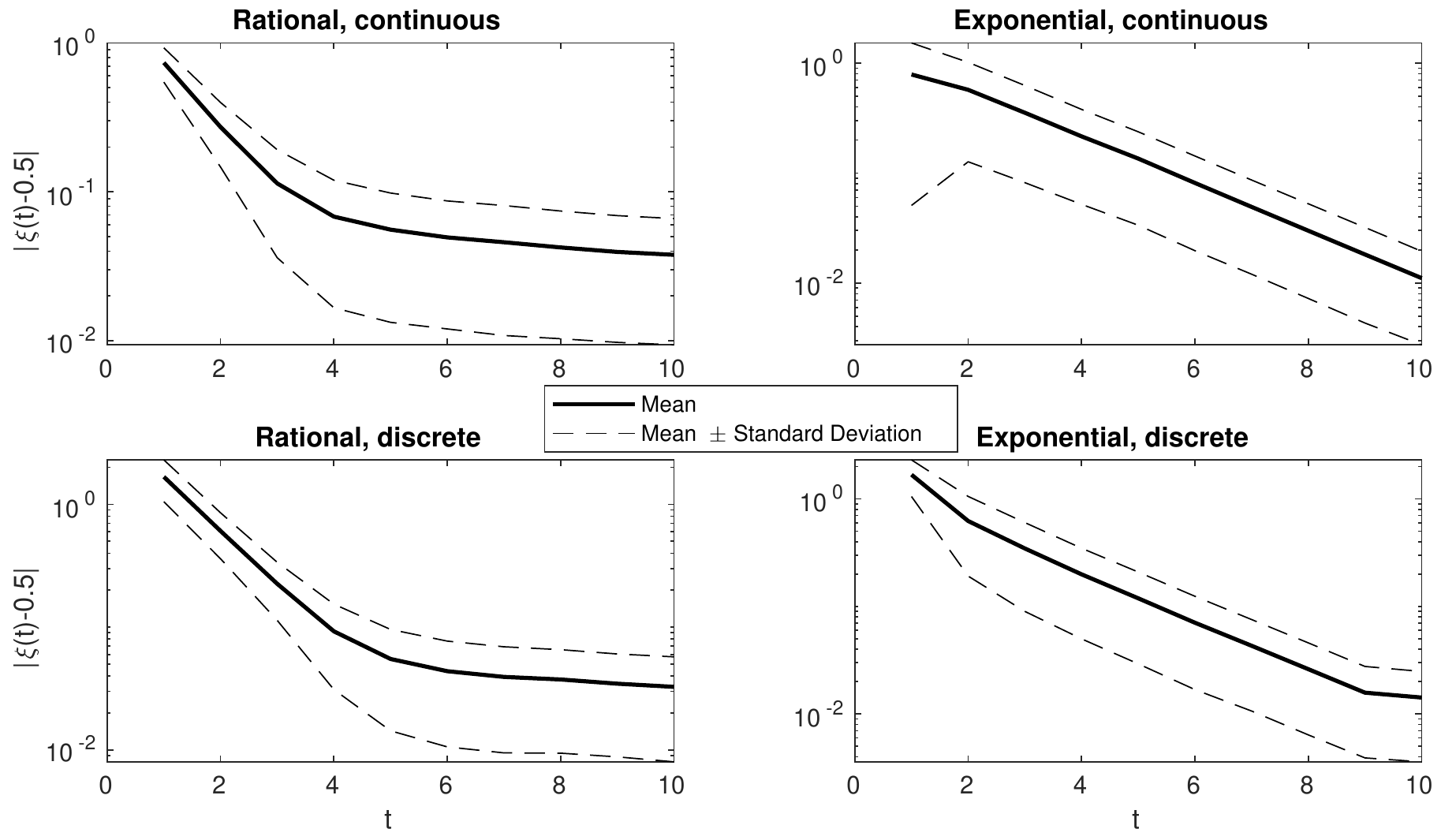}
\caption{\revised{Mean of the absolute error $|\xi(t)-0.5|$, estimated at $t = 1,2,\ldots,10$ with $10^4$ Monte Carlo samples and associated standard deviations.}}
\label{Figure_Error_MC_SGPD}
\end{figure*}

\revised{To study the performance of SGPD quantitatively, we estimate mean and standard deviation of the absolute error $|\xi(t)-0.5|$ at $t = 1,2,\ldots,10$ using $10^4$ Monte Carlo samples.   To see the full context, we also performed $10^4$ runs of the associated discrete-time SGD algorithms. The learning rate sequences $(\eta_k)_{k=1}^\infty$ are chosen as we have suggested in \eqref{eq_wt_cont2disc}. We show the results in Figure~\ref{Figure_Error_MC_SGPD}. In the exponential, continuous case, we see an exponential convergence rate. In all the other settings, the rates are sublinear.  For the discrete settings, this is exactly what we would expect based on the literature; see  \cite{Jentzen2018} and the references therein. Interestingly, the rational, continuous case appears to be less efficient than the rational, discrete case. This could imply that the learning rate function is supposed to be chosen according to the convergence rate of the underlying deterministic dynamical system. }

%


\section{Conclusions} \label{Sec_conclu}
We have proposed the stochastic gradient process as a natural continuum limit of the popular stochastic gradient descent algorithm. 
It arises when replacing the explicit Euler updates by the exact gradient flows and the waiting times between data switches by \revised{appropriate}  random waiting times.
This continuous-time model is a piecewise-deterministic Markov process. \revised{It represents the uniform subsampling from a finite set of potentials after strictly positive waiting times, the Markovian nature of SGD, the switching of potentials, and the approximation of the full deterministic gradient flow. Moreover, the process}  has an interpretation in population dynamics.

Within this continuum limit, we are able to study Wasserstein ergodicity in the case of strongly convex target functions. \revised{In the case of constant learning rates, we obtain exponential ergodicity. A similar result has been established by \cite{Dieuleveut2017} in discrete time. In the case of decreasing learning rates, we could show weak convergence to the minimiser of the target function. Our results do not allow us to assess the convergence rate in that case. 
Numerical experiments indicate that it depends on the underlying data switching process and could in certain cases be exponential as well.}

\revised{In the numerical experiments, we compared samples from SGP with samples from SGD. Here, we, for instance,} observed strong similarities \revised{between the} stationary measure of the \revised{two processes}. Indeed, we claim that our continuum limit is a good representation of stochastic gradient descent in the long-time limit.
\revised{Here, we have been able to sample accurately from SGP, as the flows attain analytical representations. In most practical cases, we would need to construct a discrete stochastic optimisation algorithm from SGP using an ODE integrator. Following this machinery, one can also retrieve known stochastic optimisation algorithms, showing that SGP is also a generalisation of those.}

We conclude this work with \revised{four} remarks. Here, we discuss possible extensions of the  stochastic gradient process framework.
\begin{remark}[Global, non-convex]
Throughout our long-time analysis, we have required strong convexity of the target functions. In practical applications, e.g. the training of deep neural networks, convexity is too strong. If certain H\"ormander bracket conditions are satisfied, exponential ergodicity may also be shown without the strong convexity assumption, see, e.g. \cite{Bakhtin2012,Cloez2015}. This does not yet imply that the processes will converge to the global optimum, if $\eta \downarrow 0$. However, we remark that the densities in the numerical illustrations in \S\ref{Sec_NumIll} very much remind us of a simulated annealing scheme, where $\eta$ controls the variance of the target measure; see e.g. \S 5.2.3 of \cite{Robert2004}. 
In some cases, simulated annealing is able to find global extrema of non-convex target functions; see \cite{Yang2000}. Hence, this connection may fertilise future research in this direction.
\end{remark}
\begin{remark}[Constrained]
SGD has been successfully applied in constrained optimisation; typically by projecting each update on the space of feasible vectors. This is difficult to represent in the SGP setting; as the projection would need to be part of the piecewise ODEs. However, PDMPs on bounded sets already appear in the original paper by Davis \cite{Davis84}. Here, a jump is introduced as soon as the boundary of the feasible set is reached. In SGP, one could introduce a jump in the continuous\revised{-}time Markov process $(\bsi(t))_{t \geq 0}$ and $(\bsj(t))_{t \geq 0}$, as soon as the boundary is hit. Hence, the data set is randomly switched until the process moves away from the boundary or the boundary point is stationary for the process. 
\end{remark}
\begin{remark}[Gradient-free]
In this work, we cover only methods that are fundamentally based on discretised gradient flows.  Other stochastic optimisation algorithms are based on other underlying dynamics. Such are ensem\-ble-based methods or evolutionary algorithms. Consider, for instance, the ensemble Kalman inversion framework, which was proposed by Schillings and Stuart \cite{Schillings2017} as a continuum limit of some ensemble Kalman filter. Using our SGP view, one may be able to analyse subsampling in ensemble Kalman inversion, as proposed by \cite{Kovachki2019}.
\end{remark}
\revised{
\begin{remark}[Non-Markovian] \label{Rem_nonMarkovian} We have modelled SGP as a piecewise-deterministic Markov process. In practice, one might be interested in non-Markovian extensions to this setting. Non-Markovian settings arise, e.g., when adapting the learning rate throughout the algorithm, as in the celebrated AdaGrad algorithm \cite{Duchi2011}. 

Another non-Markovian extension is the following. In the present work, we have decided to switch the potentials in the SGPs after random waiting times. While this allowed us to study SGP as a (piecewise-deterministic) Markov process, it did not retain SGD's property of jumping after deterministic waiting times. If we model the waiting times deterministically, the processes $(\bsi(t))_{t \geq 0}$, $(\bsj(t))_{t \geq 0}$ become general renewal processes and non-Markovian. Especially since deterministic waiting times are easier to handle in practice, the then resulting `renewal stochastic gradient processes' are highly interesting objects for future studies.
\end{remark}}

\revised{
\appendix
\section{Auxiliary results concerning CTMPs} \label{Appendix_CTMPs}
In this appendix, we give a brief derivation of the Markov kernel describing the processes $(\bsi(t))_{t \geq 0}$ and $(\bsj(t))_{t \geq 0}$. Moreover, we discuss the non-explosiveness of $(\bsj(t))_{t \geq 0}$, i.e. we show that the sequence of jump times $(T_k)_{k=1}^\infty$ satisfies $$\mathbb{P}\left(\lim_{k \rightarrow \infty} T_k = \infty\right) = 1.$$ We commence with the discussion of the Markov kernels.}
\revised{
\begin{lemma} \label{Lemma_Appendix_M}
Let $M_t: I \times 2^I \rightarrow[0,1]$ be given by \begin{align} 
    M_t(\{i\}|i_0) := \frac{1-\exp(-\lambda N t)}{N} &+ \exp(-\lambda N t) \mathbf{1}[i = i_0],
\end{align} for $i, i_0 \in I, t \geq 0.$ Then, $$M_t(\cdot | i_0) = \mathbb{P}(\bsi(t) \in \cdot| \bsi(0) = i_0) \qquad (i_0 \in I, t \geq 0).$$
Moreover, let $M_{t|t_0}': I \times 2^I \rightarrow[0,1]$ be given by \begin{align} 
M_{t|t_0}'(\{j\}|j_0) := \frac{1-\exp\left(-N \int_{t_0}^t \mu(u) \mathrm{d}u\right)}{N} + \exp\left(- N \int_{t_0}^t \mu(u) \mathrm{d}u\right) \mathbf{1}[j = j_0], 
\end{align}
for $j, j_0 \in I$ and $ t \geq t_0 \geq 0.$
Then,
$$M_{t|t_0}'(\cdot | j_0) = \mathbb{P}(\bsj(t) \in \cdot| \bsj(t_0) = j_0) \qquad\qquad (j_0 \in I, t \geq t_0 \geq 0).$$
\end{lemma}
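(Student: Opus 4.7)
The plan is to solve the Kolmogorov forward equation explicitly in both cases, exploiting the symmetric structure of the rate matrices $A$ and $B(t)$. Indeed, since $A$ has the property that all off-diagonal entries in a fixed column (and row) are equal to $\lambda$, and $B(t)$ similarly has all off-diagonal entries equal to $\mu(t)$, the transition probabilities depend on the starting state $i_0$ only through the indicator $\mathbf{1}[i = i_0]$. This observation collapses an $N \times N$ system of ODEs into a single scalar ODE for the diagonal probability.

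For the homogeneous case, I would fix $i_0 \in I$, write $p(t) := M_t(\{i_0\} | i_0)$ and $q(t) := M_t(\{i\} | i_0)$ for any $i \neq i_0$ (well-defined by symmetry), and note $p(t) + (N-1) q(t) = 1$. The Kolmogorov forward equation
$$
\frac{\mathrm{d}}{\mathrm{d}t} M_t(\{i\}|i_0) = \sum_{k \in I} M_t(\{k\}|i_0) A_{k,i}
$$
applied at $i = i_0$ gives $p'(t) = -(N-1)\lambda \, p(t) + (N-1)\lambda \, q(t) = \lambda\bigl(1 - N p(t)\bigr)$, using the normalisation to eliminate $q$. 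This linear ODE with initial condition $p(0) = 1$ has the unique solution $p(t) = \tfrac{1}{N} + \tfrac{N-1}{N}\exp(-N\lambda t)$, and then $q(t) = \tfrac{1 - \exp(-N\lambda t)}{N}$ by the normalisation. Rewriting these two expressions in the unified form $\frac{1-\exp(-\lambda N t)}{N} + \exp(-\lambda N t)\mathbf{1}[i = i_0]$ yields the claimed formula.

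For the inhomogeneous case, the identical symmetry reduction applies pointwise in $t$: defining $p(t) := M'_{t|t_0}(\{j_0\}|j_0)$ and $q(t) := M'_{t|t_0}(\{j\}|j_0)$ for $j \neq j_0$, the forward equation becomes $p'(t) = \mu(t)\bigl(1 - N p(t)\bigr)$ with $p(t_0) = 1$. Solving with integrating factor $\exp\bigl(N \int_{t_0}^t \mu(u)\,\mathrm{d}u\bigr)$ (which is well-defined since $\mu$ is continuously differentiable, hence locally integrable) gives
$$
p(t) = \frac{1}{N} + \frac{N-1}{N}\exp\!\left(-N \int_{t_0}^t \mu(u)\,\mathrm{d}u\right),
$$
and $q(t)$ follows from $p(t) + (N-1)q(t) = 1$. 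Combining these in the same unified form produces the stated expression for $M'_{t|t_0}$.

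No real obstacle is anticipated: the whole argument is a direct computation, and the only subtlety is justifying that the system of $N^2$ forward equations truly collapses to one scalar ODE. This requires a brief uniqueness argument (solutions of the forward equation are unique under mild regularity of the rate matrix, which holds here since $A$ is constant and $B(\cdot)$ is continuous), from which the symmetry ansatz $M_t(\{i\}|i_0)$ depending on $i$ only via $\mathbf{1}[i=i_0]$ is self-consistent and therefore must coincide with the unique solution.
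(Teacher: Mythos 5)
Your proposal is correct and follows essentially the same route as the paper: both arguments rest on the Kolmogorov forward equation for the finite-state chain, so there is nothing to object to. The only difference is one of presentation---you derive the formula by collapsing the system to the scalar ODE $p'(t)=\mu(t)\bigl(1-Np(t)\bigr)$ and solving it with an integrating factor, whereas the paper verifies by direct substitution that the stated kernel satisfies the forward equation together with the initial condition; your closing remark on uniqueness of solutions to the linear system is precisely the point that makes either direction rigorous.
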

\begin{proof}
We prove only the assertion concerning $(\bsj(t))_{t \geq 0}$, the proof for $(\bsi(t))_{t \geq 0}$ is analogous.
Indeed, we show that $(M_{t|t_0}'(\{j\}|j_0))_{j, j_0 \in I}$
satisfies the Kolmogorov forward equation for any $t_0 \geq 0$:
\begin{align} \label{Kolmogorov_equation}
\frac{\partial M_{t|t_0}'(\{j\}|j_0)}{\partial t} &= \sum_{k=1}^N B(t)_{k,j}M_{t|t_0}'(\{k\}|j_0)  \qquad \qquad  (j \in I, t \geq t_0),\\
(M_{t_0|t_0}'(\{j\}|j_0))_{j_0,j \in I} &= \mathrm{Id}_I. \label{Kolmogorov_equation_initial}
\end{align}
For details, we refer to the fundamental work by Kolmogorov \cite[Equations (47), (52)]{Kolmogorov1931}.
The initial condition \eqref{Kolmogorov_equation_initial} is obviously satisfied. Moving on to \eqref{Kolmogorov_equation}.
We have 
\begin{align*}
&\frac{\partial M_{t|t_0}'(\{j\}|j_0)}{\partial t} =\mu(t)\exp\left(-N \int_{t_0}^t \mu(u) \mathrm{d}u\right) -N \mu(t) \exp\left(- N \int_{t_0}^t \mu(u) \mathrm{d}u\right) \mathbf{1}[j = j_0].
\end{align*}
Due to symmetry, it is sufficient to consider the cases  $j=j_0$ and $j \neq j_0$. Let first $j=j_0$. 
Then,
\begin{align*}
\frac{\partial M_{t|t_0}'(\{j\}|j_0)}{\partial t} &=(1-N)\mu(t)\exp\left(-N \int_{t_0}^t \mu(u) \mathrm{d}u\right) 
\\ &= \left(\frac{1-N}{N}\right)\mu(t)\left(1- (1-N) \exp\left(- N \int_{t_0}^t \mu(u) \mathrm{d}u\right)  \right) \\ &\quad-\left(\frac{1-N}{N}\right)\mu(t)\left(1- \exp\left(- N \int_{t_0}^t \mu(u) \mathrm{d}u\right)\right)
\\ &= B(t)_{j_0,j_0}M_{t|t_0}'(\{j_0\}|j_0)   +  \sum_{k=1, k \neq j_0}^N B(t)_{k,j}M_{t|t_0}'(\{k\}|j_0) 
\\ &= \sum_{k=1}^N B(t)_{k,j}M_{t|t_0}'(\{k\}|j_0).
\end{align*}
If on the other hand, $j \neq j_0$, we have
\begin{align*}
\frac{\partial M_{t|t_0}'(\{j\}|j_0)}{\partial t} &=\mu(t)\exp\left(-N \int_{t_0}^t \mu(u) \mathrm{d}u\right) 
\\ &= \mu(t) \left( M_{t|t_0}'(\{j_0\}|j_0) - M_{t|t_0}'(\{j\}|j_0) \right)
\\ &= \mu(t) \Big( M_{t|t_0}'(\{j_0\}|j_0) - (N-1) M_{t|t_0}'(\{j\}|j_0) + (N-2) M_{t|t_0}'(\{j\}|j_0)\Big)
\\ &= B(t)_{j_0,j}M_{t|t_0}'(\{j_0\}|j_0) + B(t)_{j,j}M_{t|t_0}'(\{j\}|j_0)   +  \sum_{k=1, k \neq j_0,j}^N B(t)_{k,j}M_{t|t_0}(\{k\}|j_0) 
\\ &= \sum_{k=1}^N B(t)_{k,j}M_{t|t_0}'(\{k\}|j_0).
\end{align*}
Hence, $M_{t|t_0}'$ is indeed  the Markov kernel describing the transition of the CTMP $(\bsj(t))_{t \geq 0}$.
\end{proof}
We now move on to proving the non-explosiveness of $(\bsj(t))_{t \geq 0}$.

\begin{lemma}\label{Lemma_non-explosive}
Let $(T_k)_{k=1}^\infty$ be the jump times of $(\bsj(t))_{t \geq 0}$. Then, $$\mathbb{P}\left(\lim_{k \rightarrow \infty} T_k = \infty\right) = 1.$$
\end{lemma}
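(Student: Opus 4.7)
The plan is to prove this by contradiction, or equivalently, by showing that $\mathbb{P}(T_\infty > t^*) = 1$ for every fixed $t^* \in (0, \infty)$; since then $\mathbb{P}(T_\infty > n) = 1$ for all $n \in \mathbb{N}$, whence $\mathbb{P}(T_\infty = \infty) = 1$ by continuity of probability. Fix such a $t^*$. The key observation is that, although $\mu(t) \to \infty$ as $t \to \infty$, the function $\mu$ is non-decreasing and continuous on the compact interval $[0, t^*]$, so $\sup_{u \in [0, t^*]} \mu(u) = \mu(t^*) < \infty$. Consequently, the hazard function $\nu(u) = (N-1)\mu(u)$ governing the waiting times satisfies $\nu(u) \leq (N-1)\mu(t^*) =: \Lambda$ for every $u \in [0, t^*]$.

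I would then make this precise through a stochastic dominance argument. Given the past up to the $(k-1)$-st jump time $T_{k-1}$, the survival function of the waiting time $\tau_k := T_k - T_{k-1}$ is
\begin{equation*}
\mathbb{P}(\tau_k > s \mid T_{k-1}) = \exp\Bigl(-\int_0^s (N-1)\mu(u + T_{k-1})\,\mathrm{d}u\Bigr) \geq \exp(-\Lambda s)
\end{equation*}
for every $s \geq 0$ such that $T_{k-1} + s \leq t^*$, since $\mu$ is non-decreasing and bounded above by $\mu(t^*)$ on the integration range. This lets me couple, on a possibly enlarged probability space, the sequence $(\tau_k)_{k \geq 1}$ with an i.i.d. sequence $(e_k)_{k \geq 1}$ of $\mathrm{Exp}(\Lambda)$ random variables in such a way that $\tau_k \geq e_k$ whenever $T_{k-1} \leq t^*$. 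Let $N(t) := \sup\{k : T_k \leq t\}$ and let $\widetilde{N}(t)$ be the counting process associated with a homogeneous Poisson process of rate $\Lambda$ (built from the $e_k$); by construction $N(t^*) \leq \widetilde{N}(t^*)$ pointwise.

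Since $\mathbb{E}[\widetilde{N}(t^*)] = \Lambda t^* < \infty$, we have $\widetilde{N}(t^*) < \infty$ almost surely, and hence $N(t^*) < \infty$ almost surely. This is exactly the statement that only finitely many jumps of $(\bsj(t))_{t \geq 0}$ occur in $[0, t^*]$, i.e., $T_\infty > t^*$ almost surely, which is what was required. The main (minor) technical care lies in making the coupling formally rigorous; one clean way is to observe that $N(t \wedge t^*)$ is a counting process whose predictable compensator is dominated by $\Lambda \cdot (t \wedge t^*)$, so by standard results on counting processes (see, e.g., the thinning/domination arguments in \cite{davis1993}) $N(t^*)$ is dominated by a Poisson random variable with mean $\Lambda t^*$ and thus almost surely finite. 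No pathological behaviour can occur at the inner jumps because the continuous differentiability of $\mu$ also ensures, as already used in Proposition~\ref{Prop_PicLind}, that each individual waiting time is strictly positive with probability one.
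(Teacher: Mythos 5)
Your argument is correct, but it follows a genuinely different route from the paper's. The paper proves non-explosiveness by introducing an auxiliary pure-birth CTMP $(\bsk(t))_{t\geq 0}$ on $\mathbb{N}$ that shares the jump times $(T_k)_{k=1}^\infty$, and then verifies the hypotheses of a general non-explosion criterion for time-inhomogeneous chains (Proposition 1 of \cite{Chow2011}); this is where the derivative growth condition \eqref{Eq_bounded_deriv} on $\mu$ enters, through the requirement $\lvert \partial_t \Lambda(t)_{i,i}\rvert \leq C_{\overline{t}}^{(1)}(-\Lambda(t)_{i,i})$. You instead argue directly: on a fixed horizon $[0,t^*]$ the jump intensity is bounded by $\Lambda = (N-1)\mu(t^*)$, so the number of jumps in $[0,t^*]$ is stochastically dominated by a $\mathrm{Poisson}(\Lambda t^*)$ variable and is hence a.s.\ finite, and letting $t^* \to \infty$ along the integers gives $\mathbb{P}(T_\infty = \infty)=1$. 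Your approach is more elementary and self-contained, and it notably does not use \eqref{Eq_bounded_deriv} at all -- local boundedness of $\mu$ (from continuity and monotonicity) suffices; the paper's route, by contrast, invokes a criterion general enough to cover state-dependent rates. One small wrinkle in your write-up: the bound $\mathbb{P}(\tau_k > s \mid T_{k-1}) \geq e^{-\Lambda s}$ only holds for $s \leq t^* - T_{k-1}$, so the coupling $\tau_k \geq e_k$ cannot be arranged for the full waiting times as stated; the clean fix is either to first replace the intensity by its truncation $(N-1)\mu(s\wedge t^*)$ (which does not change the jump times on $[0,t^*]$ and is globally bounded), or to observe that since the jump hazard $(N-1)\mu(s)$ depends only on absolute time and not on the state, the $(T_k)$ form an inhomogeneous Poisson process, so $N(t^*)$ is exactly Poisson with mean $(N-1)\int_0^{t^*}\mu(s)\,\mathrm{d}s < \infty$. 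You flag this issue yourself and the repair is routine, so I do not regard it as a gap.
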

\begin{proof}
In the following, we construct a CTMP $(\bsk(t))_{t \geq 0}$ on $\mathbb{N}$ which has the same jump times $(T_k)_{k=0}^\infty$ as $(\bsj(t))_{t \geq 0}$. 
Then, we show that $(\bsk(t))_{t \geq 0}$ satisfies the assumptions of Proposition 1 in \cite{Chow2011} on any compact intervall in $[0,\infty)$. This will imply our assertion.
Let $(\bsk(t))_{t \geq 0}$ be the CTMP on $\mathbb{N}$ with transition rate matrix
$
\Lambda(t): \mathbb{R}^{\mathbb{N}} \rightarrow  \mathbb{R}^{\mathbb{N}} (t \geq 0),
$
with $$\Lambda(t)_{i,j} = \begin{cases} (1-N)\mu(t), &\text{if }j = i\\  (N-1)\mu(t), &\text{if }j =i+1, \\  0, &\text{otherwise.} \end{cases}$$
We now need to check the following assertions for $\overline{t}>\underline{t}\geq 0.$
\begin{itemize}
\item[(i)] $\inf_{t \in [\underline{t}, \overline{t}]} \sum_{i=1}^n\frac{1}{-\Lambda(t)_{i,i}} \rightarrow \infty$, as $n \rightarrow \infty$,
\item[(ii)] there is a constant $C_{\overline{t}}^{(0)}>0$, such that 
$-\Lambda(t)_{i,i} > C_{\overline{t}}^{(0)}(-\Lambda(t)_{j,j}) $, for $i >j$, $t \in  [\underline{t}, \overline{t}]$
\item[(iii)] there is a constant $C_{\overline{t}}^{(1)}>0$, such that 
$$
\sum_{j=i+1}^\infty\frac{\Lambda(t)_{i,j}}{-\Lambda(t)_{i,i}}(j-i) \leq C_{\overline{t}}^{(1)} \sum_{j=1}^i \frac{1}{-\Lambda(t)_{j,j}},
$$
for $i \in \mathbb{N}, t \in [\underline{t}, \overline{t}]$,
and 
$$
\left\lvert-\frac{\partial}{\partial t}{\Lambda}(t)_{i,i} \right\rvert \leq C_{\overline{t}}^{(1)}(-\Lambda(t)_{i,i}).
$$
for $i \in \mathbb{N}, t \in [\underline{t}, \overline{t}]$.
\end{itemize}
Since $-\Lambda(t)_{i,i}$ is constant in $i \in \mathbb{N}$ and non-decreasing in $t$, the infimum in (i) is given by $-n/\Lambda(\overline{t})_{1,1}.$ This indeed goes to $\infty$, as $n \rightarrow \infty$, proving (i).
Again, as $-\Lambda(t)_{i,i}$ is constant in $i \in \mathbb{N}$, (ii) holds, with $C_{\overline{t}}^{(0)} = 0.9$.
Moreover, we have
$$
\sum_{j=i+1}^\infty\frac{\Lambda(t)_{i,j}}{-\Lambda(t)_{i,i}}(j-i)  = 1.
$$
Choosing $C_{\overline{t}}^{(1)} \geq -\Lambda(\overline{t})_{1,1}$, we can verify the first assertion of (iii), since $-\Lambda(t)_{1,1}$ is non-decreasing in $t$. The second assertion of (iii) is implied by \eqref{Eq_bounded_deriv}. (iii) is satisfied with $C_{\overline{t}}^{(1)}:= \max \{-\Lambda(\overline{t})_{1,1}, C_{\overline{t}}\}$.
\end{proof}
}

\bibliographystyle{plain}      
\bibliography{library}   

\end{document}